\newtheorem{theorem}{Theorem}[section]
\newtheorem{lemma}{Lemma}[section]
\newtheorem{corollary}{Corollary}[section]
\newtheorem{definition}{Definition}[section]
\numberwithin{equation}{section}
\def\R{\Bbb R}
\def\C{\Bbb C}
\def\d{\partial}
\def\a{\alpha}
\def\b{\beta}
\def\g{\gamma}
\def\e{\epsilon}
\title{Traversally  Generic \& Versal Vector Flows:  Semi-Algebraic Models of Tangency to the Boundary}
\author{Gabriel Katz}
\address{5 Bridle Path Circle, Framingham, MA 01701, USA}
\email{gabkatz@gmail.com}
\begin{document}
\maketitle

\begin{abstract} Let $X$ be a compact smooth manifold with boundary. In this article, we study the spaces $\mathcal V^\dagger(X)$ and $\mathcal V^\ddagger(X)$ of so called boundary generic and traversally generic  vector fields on $X$ and the place they occupy in the space $\mathcal V(X)$ of all fields (see Theorems \ref{th3.4} and Theorem \ref{th3.5}). The definitions of boundary generic and traversally generic vector fields $v$ are inspired by some classical notions from the singularity theory of smooth Bordman maps \cite{Bo}. Like in that theory (cf. \cite{Morin}), we establish local versal algebraic models for the way a sheaf of $v$-trajectories interacts with the boundary $\d X$. For fields from the space $\mathcal V^\ddagger(X)$, the finite list of such models depends only on $\dim(X)$; as a result, it is universal for all equidimensional manifolds. In specially adjusted coordinates, the boundary and the $v$-flow acquire descriptions in terms of universal deformations of real polynomials whose degrees do not exceed $2\cdot \dim(X)$.  
\end{abstract}

\section{Introduction}

This paper is the second in a series that researches the Morse Theory, gradient flows, concavity and complexity on smooth compact manifolds with boundary. In the context of $3D$-flows, some of its ideas can be traced back to \cite{K}. The paper serves as an analytical foundation for the investigation of \emph{boundary generic} (see Definition \ref{def2.1})  and, so called, \emph{traversally generic}\footnote{For vector fields $v$ that vanish on $X$, the $v$-trajectory space is pathological; in contrast,  the trajectory spaces of a traversally generic  fields are a compact $CW$-complexes.} (see Definition \ref{def3.2}) vector fields $v$ on manifolds $X$ with boundary. These analytical tools provide us with local semi-algebraic models for the ways in which typical nonsingular vector flows interact with the boundary $\d X$. Here the word ``local" refers to the vicinity of a given trajectory $\g$ of the $v$-flow. 

The main observation is that, for smooth fields,  each intersection point $a \in \g \cap \d X$ comes with a positive integral multiplicity $j(a)$ attached to it.  This multiplicity $j(a)$ can be given a number of competing but equivalent definitions, one of which uses the \emph{Morse stratification} (see Definition \ref{def2.1} and formula (\ref{eq2.1})), which has been studied in \cite{K1}. Naively, one can think of $j(a)$ as a \emph{multiplicity of tangency} between $\g$ and $\d X$. So, surprisingly, the smooth topology of the flow can distinguish between, say, degree 2 and degree 4 tangency!

Lemma \ref{lem3.1}  and Lemma \ref{lem3.4} describe the models for $\d X$ and $v$ in the vicinity of point $a \in \g \cap \d X$ and in the vicinity of a trajectory $\g$, respectively.  It turns out that, for traversally generic fields, in special flow-adjusted coordinates $(u, \vec x)$, the boundary is given by a real polynomial equation $P(u, \vec x) = 0$ of degree that depends on $\g$ and does not exceed $2\cdot \dim(X)$. The manifold $X$ is given by the polynomial inequality $P(u, \vec x) \leq 0$. The polynomial $P(u, \vec x)$ depends only on the ordered sequence of  multiplicities  $\{j(a)\}_{a \in \g \cap \d X}$. So, in each dimension, there are only finitely many semi-algebraic models for the vicinity of $v$-trajectories $\g$ in $X$.

We introduce a variety of spaces that correspond to different types of vector fields on $X$, the space $\mathcal V^\dagger(X)$ of generic  with respect to the boundary fields and the space  $\mathcal V^\ddagger(X)$ of traversally generic fields are among them. Two theorems describe our main results: Theorem \ref{th3.4} claims that $\mathcal V^\dagger(X)$ is an open and dense in the space of all smooth fields $\mathcal V(X)$, and Theorem \ref{th3.5} claims that $\mathcal V^\ddagger(X)$ is open and dense in the space $\mathcal V_{\mathsf{trav}}(X)$ of all \emph{traversing}  (equivalently, all gradient-like non-vanishing) fields. Traversing fields have only trajectories that are homeomorphic to closed intervals or singletons.

\section{Morin's Local Models: How Nonsingular Flows Interact with Boundary}

Let $v$ be a vector field on a smooth compact $(n+1)$-manifold $X$ with boundary $\d X$. To achieve some uniformity in our notations, let $\d_0X := X$ and $\d_1X := \d X$.

The vector field $v$ gives rise to a partition $\d_1^+X \cup \d_1^-X $ of the boundary $\d_1X$ into  two sets: the locus $\d_1^+X$, where the field is directed inward of $X$, and  $\d_1^-X$, where it is directed outwards. We assume that $v$, viewed as a section of the quotient  line bundle $T(X)/T(\d X)$ over $\d X$, is transversal to its zero section. This assumption implies that both sets $\d^\pm_1 X$ are compact manifolds which share a common boundary $\d_2X := \d(\d_1^+X) = \d(\d_1^-X)$. Evidently, $\d_2X$ is the locus where $v$ is \emph{tangent} to the boundary $\d_1X$.

Morse has noticed that, for a generic vector field $v$, the tangent locus $\d_2X$ inherits a similar structure in connection to $\d_1^+X$, as $\d _1X$ has in connection to $X$ (see \cite{Mo}). That is, $v$ gives rise to a partition $\d_2^+X \cup \d_2^-X $ of  $\d_2X $ into  two sets: the locus $\d_2^+X$, where the field is directed inward of $\d_1^+X$, and  $\d_2^-X$, where it is directed outward of $\d_1^+X$. Again, let us assume that $v$, viewed as a section of the quotient  line bundle $T(\d_1X)/T(\d_2X)$ over $\d_2X$, is transversal to its zero section.

For generic fields, this structure replicates itself: the cuspidal locus $\d_3X$ is defined as the locus where $v$ is tangent to $\d_2X$; $\d_3X$ is divided into two manifolds, $\d_3^+X$ and $\d_3^-X$. In  $\d_3^+X$, the field is directed inward of $\d_2^+X$, in  $\d_3^-X$, outward of $\d_2^+X$. We can repeat this construction until we reach the zero-dimensional stratum $\d_{n+1}X = \d_{n+1}^+X \cup  \d_{n+1}^-X$. 

These considerations motivate 

\begin{definition}\label{def2.1} 
We say that a smooth field $v$ on $X$ is \emph{boundary generic} if:
\begin{itemize}
\item $v|_{\d X} \neq 0$,
\item $v$, viewed as a section of the tangent bundle $T(X)$, is transversal to its zero section,
\item  for each $j = 1, \dots,  n+1$, the $v$-generated stratum $\d_jX$ is a  smooth submanifold of  $\d_{j-1}X$,
\item  the field  $v$, viewed as section of the quotient 1-bundle  $$T_j^\nu := T(\d_{j-1}X)/ T(\d_jX) \to \d_jX,$$ is transversal to the zero section of $T_j^\nu$ for all $j > 0$. 
\end{itemize}
\hfill\qed
\end{definition}

Thus a boundary generic vector field $v$  on $X$  gives rise to two  stratifications: 
\begin{eqnarray}\label{eq2.1}
\d X := \d_1X \supset \d_2X \supset \dots \supset \d_{n +1}X, \nonumber \\ 
X := \d_0^+ X \supset \d_1^+X \supset \d_2^+X \supset \dots \supset \d_{n +1}^+X 
\end{eqnarray}
, the first one by closed submanifolds, the second one---by compact ones.  Here $\dim(\d_jX) = \dim(\d_j^+X) = n +1 - j$. For simplicity, the notations ``$\d_j^\pm X$" do not reflect the dependence of these strata on the vector field $v$. When the field varies, we use a more accurate notation ``$\d_j^\pm X(v)$".

Let $v$ be a boundary generic vector field on  $X$ such that $v \neq 0$ along the boundary $\d X$.  We can add an external collar to $X$ and smoothly extend the field into a larger manifold $\hat X$ without introducing new singularities.  Let $\hat v$ denote the extended field. 

At each point $x \in \d_1X$, the $(-\hat v)$-flow defines the germ of the projection $p_x: \hat X \to S_x$, where $S_x$ is a local section of the $\hat v$-flow which is transversal to it.  The projection is considered at each point of  $\d_1X \subset \hat X$. When $\hat v$ is a gradient-like field for a function $\hat f: \hat X \to \R$, we can choose  the germ of the hypersurface $f^{-1}(f(x))$ for the role of $S_x$.
\smallskip

Let $\mathcal V_{\neq 0}(X)$ be the space of smooth vector fields $v \neq 0$, equipped with the $C^\infty$-topology.  
\smallskip

A theorem of Morin \cite{Morin} describes all local models of $p_x : \d_1X \to S_x$ for a $G_\delta$, or \emph{residual}\footnote{that is, a countable intersection of open and dense subsets in $\mathcal V_{\neq 0}(X)$}, set of fields in $\mathcal V_{\neq 0}(X)$. 

Let us introduce and depict these models.  For any integer $s \in [1, n + 1]$, consider  the polynomial 
\begin{eqnarray}\label{eq2.2}
Q_s(u_1, u_2, \dots ,  u_{n- 1}, \mathsf{u}_n) := \mathsf{u}_n^s + \sum_{i=0}^{s-2}\, u_i \mathsf{u}_n^i
\end{eqnarray}
and the map $\mu_s : \R^n \to \R^{n+1}$ given by 
\begin{eqnarray}\label{eq2.3}
\mu_s : (u_1, u_2, \dots, u_{n- 1}, \mathsf{u}_n) \to 
(u_1, u_2, \dots, u_{n - 1}, Q_s,\, \mathsf{u}_n),
\end{eqnarray}

Let $(y_1, \dots, y_n, y_{n + 1})$ be coordinates in $\R^{n+1}$. The constant field $e_{n+1} := \d_{y_{n + 1}}$,  will play the role of nonsingular field $\hat v$ on $\hat X$.  

Consider  the projection $\pi: \R^{n+1} \to \R^n$ defined by the formula
\begin{eqnarray}\label{eq2.4}
\pi : (y_1, y_2, \dots,  y_n, y_{n+1}) \to (y_1, y_2, \dots, y_n).
\end{eqnarray}
Then the composition $\pi\circ\mu_s$ is given by the formula
\begin{eqnarray}\label{eq2.5}
(u_1, u_2, \dots, u_{n- 1}, \mathsf{u}_n) \to 
(u_1, u_2, \dots, u_{n- 1},\,  \mathsf{u}_n^{s} + \sum_{i=0}^{s-2} u_i\mathsf{u}_n^i).
\end{eqnarray}

Let us denote by $\lambda$ the line distribution $\ker(D\pi)$ tangent to the fibers of the projection $\pi: \R^{n+1} \to \R^n$.
\smallskip

The following result \cite{Morin} is of key importance for our goals. 

\begin{theorem}[\bf Morin]\label{th2.1} For a $G_\delta$-set of 1-dimensional distributions $l$ on $\hat X$ and any point $x \in \d_1 X$, there is a neighborhood $U$ of $x$ in $\hat X$,  a diffeomorphism $h: U \to \R^{n+1}$, and an integer  $s \in [1, n+1]$ such that 
\begin{itemize}
\item $h(x) = 0 \in \R^{n+1}$,
\item $h(\d_1X \cap U) = \mu_s(\R^n)$, where $\mu_s$ is defined by formula (\ref{eq2.3}),
\item the distribution $l|_{U}$ is mapped by the differential $Dh: TU_x \to T\R^{n+ 1}$ to the distribution $\lambda$. \hfill\qed
\end{itemize}
\end{theorem}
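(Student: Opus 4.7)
The plan is to reduce Theorem \ref{th2.1} to Morin's classical classification of singularities of smooth maps between equidimensional manifolds, applied to the restriction $\pi|_{\d_1 X}$ after straightening the distribution $l$, and then to lift the resulting source and target coordinate changes to an ambient diffeomorphism of $\hat X$ that preserves $l$. First I would invoke the flow box theorem for nonsingular line distributions: since $l \neq 0$, there exist local coordinates $(z_1, \ldots, z_n, z_{n+1})$ on a neighborhood $U_0$ of $x$ in $\hat X$ in which $x$ is the origin and $l|_{U_0}$ is spanned by $\d_{z_{n+1}}$. Under this identification the projection along $l$-fibers becomes the standard projection $\pi$ of (\ref{eq2.4}), and $\d_1 X \cap U_0$ becomes a smooth hypersurface in an open subset of $\R^{n+1}$.

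The restriction $f := \pi|_{\d_1 X \cap U_0}$ is then a smooth map between $n$-manifolds. By Thom's multijet transversality theorem, the condition that $f$ have only stable Morin singularities of type $A_s$ (equivalently, Thom--Boardman symbol $\Sigma^{1,1,\ldots,1,0}$ with $s$ initial ones, $s \in [1, n+1]$) at every point cuts out a $G_\delta$-set of distributions $l$. At the chosen $x$, where the singularity is of type $A_s$, Morin's classification theorem \cite{Morin} yields a source chart $\phi\colon (\R^n, 0) \to (\d_1 X \cap U_0, x)$ and a target chart $\psi\colon (\R^n, 0) \to (\R^n, 0)$ such that $\psi \circ \pi \circ \phi = \pi \circ \mu_s$, with $\mu_s$ as in (\ref{eq2.3}). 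I would then lift these changes to an ambient diffeomorphism on a possibly smaller neighborhood $U \subset U_0$ by setting
\[
h(z_1, \ldots, z_n, z_{n+1}) := \bigl(\psi(z_1, \ldots, z_n),\; \eta(z_1, \ldots, z_n, z_{n+1})\bigr),
\]
where $\eta$ is smooth, satisfies $\eta \circ \phi(u) = \mathsf{u}_n$ on $\d_1 X$, and has $\d \eta/\d z_{n+1}$ nowhere zero on $U$. Because the first $n$ components of $h$ are independent of $z_{n+1}$, the differential $Dh$ automatically sends $l$ into $\lambda = \ker(D\pi)$; the identity $h(\d_1 X \cap U) = \mu_s(\R^n)$ then follows by direct substitution, since for $z = \phi(u)$ we have $h(z) = (\psi \circ \pi \circ \phi(u),\, \mathsf{u}_n) = (u_1, \ldots, u_{n-1}, Q_s(u), \mathsf{u}_n) = \mu_s(u)$.

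The main technical obstacle is the extension of $\eta$ in the last step. When $s \geq 2$ the vertical direction $\d_{z_{n+1}}|_x$ lies in $T_x(\d_1 X)$, so prescribing $\eta|_{\d_1 X}$ already determines the directional derivative of $\eta$ along $\d_{z_{n+1}}$ at $x$, and one must verify this derivative is nonzero. Setting $V := (D\phi|_0)^{-1}(\d_{z_{n+1}}|_x) \in T_0 \R^n$, differentiating the identity $\pi \circ \phi = \psi^{-1} \circ \pi \circ \mu_s$ at $0$ and using $D\pi(\d_{z_{n+1}}) = 0$ forces $V \in \ker D(\pi \circ \mu_s)|_0 = \R \cdot \d_{\mathsf{u}_n}$; since $V \neq 0$, we have $V = c \, \d_{\mathsf{u}_n}$ with $c \neq 0$, hence $d(\mathsf{u}_n \circ \phi^{-1})|_x(\d_{z_{n+1}}) = c \neq 0$, and $\eta$ extends as required using a standard tubular-neighborhood argument, completing the construction of $h$ on a suitable neighborhood $U$.
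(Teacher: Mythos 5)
Theorem \ref{th2.1} is not proved in the paper: it is cited from \cite{Morin} and closed with a $\qed$ inside the statement, with no accompanying proof. There is therefore nothing in the paper to compare your argument against; I assess it on its own.

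Your reconstruction is sound and is indeed the standard way to derive this statement from Morin's classification of equidimensional map germs. Straightening $l$ by a flow box turns the quotient map into $\pi$ and $\d_1X$ into a hypersurface; you then apply Morin's normal form theorem to $\pi|_{\d_1X}$ to get source and target diffeomorphisms $\phi,\psi$ with $\psi\circ\pi\circ\phi=\pi\circ\mu_s$, and lift these to an ambient $h$ whose first $n$ components are $\psi\circ\pi$ (hence $z_{n+1}$-independent and automatically $\lambda$-preserving under $Dh$) and whose last component $\eta$ extends $\mathsf{u}_n\circ\phi^{-1}$ from $\d_1X$. The one genuinely nontrivial point is whether the boundary prescription for $\eta$ is compatible with $\d\eta/\d z_{n+1}\neq 0$ when $s\geq 2$, i.e., when $\d_{z_{n+1}}|_x\in T_x(\d_1X)$; your computation that $(D\phi|_0)^{-1}(\d_{z_{n+1}}|_x)$ must lie in $\ker D(\pi\circ\mu_s)|_0=\R\cdot\d_{\mathsf{u}_n}$ and hence equals $c\,\d_{\mathsf{u}_n}$ with $c\neq 0$ correctly shows the forced value of $\d\eta/\d z_{n+1}(x)$ is $c\neq 0$, after which any smooth extension works on a shrunk neighborhood by continuity; the induced $h$ is a local diffeomorphism because its Jacobian is block-triangular with $D\psi$ and $\d\eta/\d z_{n+1}$ on the diagonal. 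Two minor points you gloss over, both standard: (i) translating multijet transversality for \emph{maps} into genericity for \emph{distributions} $l$ requires checking that the jet of $\pi_l|_{\d_1X}$ depends submersively on the jet of $l$ near $\d_1X$, so that the pullback of the complement of the Morin/normal-crossing strata to the space of distributions is still a countable union of closed nowhere-dense sets; and (ii) the set-theoretic identity $h(\d_1X\cap U)=\mu_s(\R^n)$ should be read as a germ (or obtained after a final shrink-and-rescale of $U$), since your construction produces $h$ as a diffeomorphism onto an open neighborhood of $0\in\R^{n+1}$ rather than onto all of $\R^{n+1}$. Neither affects the substance.
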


Note that  Morin's theorem a priori allows for the four-fold ambiguity: (1) the distribution $l$ can be given two possible orientations (i.e. the model field can be $\pm e_{n + 1}$), and (2) the manifold $X$ can  occupy  each of the two chambers in which $\R^{n+1}$ is locally divided by $ \mu_s(\R^n)$.\smallskip

We can  describe these local models by replacing their $\vec u$-parametric form in formula (\ref{eq2.3}) with equations in  the new coordinates
$$(u, x_0, x_1 \dots, x_{n-1}) := (y_{n + 1}, -y_n, y_1, \dots, y_{n-1}).$$

Let 
\begin{eqnarray}\label{eq2.6}
P_s(u, x) := Q_s(y_1,  \dots , y_{n-1}, y_{n+1}) - y_n = u^s + \sum_{i=0}^{s-2}x_i u^i
\end{eqnarray}
, so that $\d_1X$ is given by the equation 
$P_s(u, x_0, x_1, \dots,  x_{n -1}) = 0$.
For a fixed vector  $x := (x_0, \dots , x_{n -1})$,
$P_s$ is a depressed polynomial in $u$.

Therefore, for each point $x \in \R^n$, the points of the boundary $\d_1X$ residing in the fiber $\pi^{-1}(x)$ of the projection $\pi: \R^{n+1} \to \R^n$  are the real-valued  zeros of  the polynomial $P_s(u, x)$.

With each $x \in \R^n$, we associate the real zero divisor $D_s(x)$ of the $u$-polynomial $P_s(u, x)$. Its support resides in $\R$. We will be particularly interested in the \emph{ordered} sequence $\omega = \{\omega_i\}_i$ of  multiplicities that is generated by the divisor $D_s(x)$. 

Put $e_{n+1} := \frac{\d}{\d u}$. We abbreviate $\big(\frac{\d}{\d u}\big)^j P_s$ as $P_s^{(j)}$.  Let $w := (u, x) \in \R \times \R^n \, \approx\,  \R^{n+1}$.

For the local models as in formula (\ref{eq2.6}), the Morse stratification $\{\d_j^+X := \d_j^+X(\d_u)\}_j$ (see formula (\ref{eq2.1})) can be expressed in terms of the divisor $D_s(x)$ and labeled by the ordered list of multiplicities $\omega(x)$.
\begin{theorem}\label{th2.2} For a $G_\delta$-set of vector fields $v$ on $X$,  $v|_{\d_1X} \neq 0$, and for each point  $a \in \d_1X$, there exists an integer $s \in [1, n+1]$ such that  the models for $v$ and the $v$-induced strata $\{\d_j^+X\}_{1 \leq j \leq n + 1}$, in the vicinity of $a$, are given  by one of the four real semi-algebraic sets:
\begin{enumerate}
\item $X = \{w \in \R^{n+1}|\; P_s(w) \geq 0\}$ and $v = e_{n+1}$;
\begin{itemize}
\item $\d_jX = \{w \in \R^{n+1}|\; P_s^{(i)}(w) = 0 \text{\;for all\;} i < j\}$
\item $\d_j^+X = \{w \in \R^{n+1}|\; P_s^{(i)}(w) = 0\;  \text{\;for all\;} i < j,\;\text{and}\;  P_s^{(j)}(w) \geq  0\}$
\end{itemize} 
\smallskip
\item $X = \{w \in \R^{n+1}|\; P_s(w) \leq 0\}$ and $v = e_{n+1}$;
\begin{itemize}
\item $\d_jX = \{w \in \R^{n+1}|\; P_s^{(i)}(w) = 0 \text{\;for all\;} i < j\}$
\item $\d_j^+X = \{w \in \R^{n+1}|\; P_s^{(i)}(w) = 0\;  \text{\;for all\;} i < j,\;\text{and}\;  P_s^{(j)}(w) \leq  0\}$
\end{itemize}
\item $X = \{w \in \R^{n+1}|\; P_s(w) \geq 0\}$ and $v = -e_{n+1}$;
\begin{itemize}
\item $\d_jX = \{w \in \R^{n+1}|\; P_s^{(i)}(w) = 0 \text{\;for all\;} i < j\}$
\item $\d_j^+X = \{w \in \R^{n+1}|\; P_s^{(i)}(w) = 0\;  \text{\;for all\;} i < j,\;\text{and}\; \break (-1)^j P_s^{(j)}(w) \geq  0\}$
\end{itemize} 
\smallskip
\item $X = \{w \in \R^{n+1}|\; P_s(w) \leq 0\}$ and $v = -e_{n+1}$;
\begin{itemize}
\item $\d_jX = \{w \in \R^{n+1}|\; P_s^{(i)}(w) = 0 \text{\;for all\;} i < j\}$
\item $\d_j^+X = \{w \in \R^{n+1}|\; P_s^{(i)}(w) = 0\;  \text{\;for all\;} i < j,\;\text{and}\; \break (-1)^jP_s^{(j)}(w) \leq  0\}$.
\end{itemize}  
\end{enumerate}
\end{theorem}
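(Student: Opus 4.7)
The strategy is to apply Morin's Theorem~\ref{th2.1} to a $G_\delta$-set of vector fields $v$ with $v|_{\d_1 X} \ne 0$ in order to obtain, near each $a \in \d_1 X$, local coordinates $(u, x_0, \dots, x_{n-1})$ (as in (\ref{eq2.6})) in which $\d_1 X$ is cut out by $P_s = 0$ and the line distribution spanned by $v$ coincides with $\lambda = \ker(D\pi) = \langle e_{n+1}\rangle$. Any nonvanishing smooth rescaling of $e_{n+1}$ can be normalized to $\pm e_{n+1}$ by an orientation-preserving reparametrization of the $u$-coordinate, accounting for two of the four cases; the remaining binary choice simply records which of the two local chambers $\{P_s \geq 0\}$ or $\{P_s \leq 0\}$ is occupied by $X$. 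This exactly matches the four-fold ambiguity noted after Theorem~\ref{th2.1}.

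With the normalization in place, I would establish the formulas for $\d_j X$ and $\d_j^+ X$ by a single induction on $j$. The base case $j = 1$ is immediate: $\d_1 X = \{P_s = 0\}$ is part of Morin's conclusion, and $\d_1^+ X$ is determined by whether $v(P_s) = \pm P_s'$ has the sign needed to direct $v$ into the selected chamber. For the inductive step, assume $\d_j X = \{P_s^{(i)} = 0,\; 0 \leq i < j\}$. Then $a \in \d_j X$ lies in $\d_{j+1} X$ iff $v(a)$ is tangent to $\d_j X$, iff $v(P_s^{(i)})(a) = \pm P_s^{(i+1)}(a) = 0$ for all $i < j$; the only equation not already built into $\d_j X$ is $P_s^{(j)}(a) = 0$, which yields the claimed formula for $\d_{j+1} X$. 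For $\d_{j+1}^+ X$, one identifies the quotient line bundle $T(\d_j X)/T(\d_{j+1} X)$ with the span of $dP_s^{(j)}|_{\d_j X}$, so the sign of $v$ as a section of this quotient equals the sign of $v(P_s^{(j)}) = \pm P_s^{(j+1)}$. Requiring this sign to agree with the orientation of $\d_j^+ X$ inside $\d_j X$ produces the desired recursion: in cases (1)--(2) no extra sign flip occurs, whereas in cases (3)--(4) the minus sign in $v = -e_{n+1}$ contributes a factor of $-1$ at every step, so at level $j+1$ an accumulated factor of $(-1)^{j+1}$ appears in the defining inequality, as required.

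The main obstacle is not this combinatorial induction but rather securing the correct $G_\delta$: Morin's Theorem~\ref{th2.1} controls only the boundary as a subset of $\hat X$, whereas the stratification (\ref{eq2.1}) additionally demands the transversality conditions of Definition~\ref{def2.1}. I would therefore work on the intersection of Morin's $G_\delta$ with the space of boundary generic fields that do not vanish on $\d_1 X$, and verify that this intersection is itself a $G_\delta$ via standard jet-transversality arguments. Once this is arranged, transversality of $v$ to the zero section of each quotient bundle $T_j^\nu$ is automatic in the Morin-normalized model, since $P_s, P_s', \dots, P_s^{(j-1)}$ form an independent system on $\d_j X$ and $v$ pairs with $P_s^{(j-1)}$ as $\pm P_s^{(j)}$, which is nonzero away from $\d_{j+1} X$.
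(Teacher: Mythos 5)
Your proof is correct and follows essentially the same path as the paper's: both start from Morin's Theorem~\ref{th2.1}, identify $\d_j X$ as the common zero locus $\{P_s^{(i)}=0\}_{i<j}$ by iterating the tangency condition via $v(P_s^{(i)}) = \pm P_s^{(i+1)}$ (which is exactly the paper's formula (\ref{eq2.8}) in cotangent rather than gradient language), and then track how the sign of $v(P_s^{(j)})$ cuts out $\d_j^+X$, yielding the accumulated $(-1)^j$ factor when $v = -e_{n+1}$. The one place you slightly over-reach is the claim that the rescaling $\phi\, e_{n+1}$ can be renormalized to $\pm e_{n+1}$ by a $u$-reparametrization while keeping $P_s$ polynomial — that is not literally true, but it is also unnecessary, since (as your own computations implicitly show) both the equations $\{P_s^{(i)}=0\}$ and the inequalities $\pm P_s^{(j)} \ge 0$ are unchanged when $v$ is multiplied by a positive function, so only the orientation of the Morin distribution $\lambda$ matters.
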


\begin{proof}  Our argument is based on Theorem \ref{th2.1}.  By formula (\ref{eq2.6}), the gradient
\begin{eqnarray}\label{eq2.7} 
\nabla P_s = (P_s^{(1)}(u, x),\, 1, u, u^2,\; \dots ,\;  u^{s-2},\, 0, \; \dots \;  0).
\end{eqnarray}
The orthogonality of $e_{n+1}$ and  $\nu_1 := \nabla P_s$ defines the locus $\d_2X$. Hence, the orthogonality is  equivalent to the constraint $P_s^{(1)}(x, u) = 0$, and $\d_2X$ is determined by two equations: $P_s = 0$ and $P_s^{(1)} = 0$. Formulas (\ref{eq2.6}) and (\ref{eq2.7}) imply that the vector $\nu_2 := \nabla [P_s^{(1)}] = \nu_1^{(1)}$ has $P_s^{(2)}$ for its last coordinate. Since $\d_3X$ is the locus where $e_{n+1}$ is tangent to $\d_2X$ and $\nu_2$ is orthogonal the hypersurface $\{P_s^{(1)} = 0\} \supset \d_2X$, the vectors $\nu_2$ and $e_{n+1}$ must be orthogonal along $\d_3X$. This leads to the equation $P_s^{(2)} = 0$. 
 
Using  this type of argument repeatedly for the linear independent vector fields $\{\nu_j := \nu_1^{(j)}\}_j$, proves the first bullets in claims (1) and (2) of the theorem.
\smallskip
 
When $X$ is defined by the inequality $P_s \geq 0$, $e_{n+1}$ points inside $X$ at $w = (u, x) \in  \d_1X$ if and only if the dot product $e_{n+1}\cdot \nu_1(w)  \geq 0$. That is, $\d_1^+X$ is defined by $P_s^{(1)} \geq 0$ together with $P_s = 0$. On the other hand, when $X$ is defined by the inequality $P_s \leq 0$, $e_{n+1}$ points inside $X$ at $w \in  \d_1X$ if and only if $P_s^{(1)}(w) \leq 0$ and $P_s(w) = 0$. 
\smallskip

Note that 

\begin{eqnarray}\label{eq2.8}
\frac{\d}{\d u}(\nu_{j - 1} \cdot e_{n+1}) =  \frac{\d}{\d u}\nu_{j - 1} \cdot e_{n+1} + \nu_{j - 1} \cdot \frac{\d}{\d u}e_{n+1}  = \nu_{j} \cdot e_{n+1} 
\end{eqnarray}
 
Along $\d_2X$, in view of formula (\ref{eq2.8}),  the property $\nu_2 \cdot e_{n+1} \geq 0$ is equivalent to the inequality $\frac{\d}{\d u}(\nu_1 \cdot e_{n+1}) \geq 0$. 
When, along $\d_2X$, the Lie derivative 
$$\mathcal L_{e_{n+1}}(\nu_1 \cdot e_{n+1}) := \frac{\d}{\d u}(\nu_1 \cdot e_{n+1}) =  \nu_2 \cdot e_{n+1}$$ 
is nonnegative, the field $e_{n+1}$ points inside $\d_1^+X$; otherwise, it points inside $\d_1^-X$. Therefore, $\d_2^+X$ is depicted by the inequality $P_s^{(2)} = \nu_2 \cdot e_{n+1} \geq 0$, coupled with the pair of equalities $P_s = 0, P_s^{(1)} = 0$. The general case of $$\d_j^+X = \{P_s^{(i)} = 0\}_{i < j} \cap \{P_s^{(j)} \geq 0\}$$  is analogous.  

The argument in case $(2)$ of the theorem is very similar to case $(1)$. 

We notice that flipping the direction of the field $v = e_{n+1}$ does not change the polarity of the  strata with even $j$'s and reverses the polarity of the strata with odd ones. 

Finally, we combine this description of  stratifications $\big\{\d_j^+\{P_s \geq 0\}\big\}_j$, $\big\{\d_j^+\{P_s \leq 0\}\big\}_j$ with  Morin's Theorem \ref{th2.1} to get the desired local models for a $G_\delta$-set of fields in $\mathcal V(X)$.
\end{proof}

\noindent {\bf Remark 2.1.} Note that Theorem \ref{th2.2} does not describe  local models for generic Morse data $(f, v)$ in the vicinity of a typical point $a \in \d_1X$, just $4(n +1)$ local models for generic nonsingular fields $v$; in other words, $\pm e_{n+1}$ mimics $v$, but the coordinate function $\pm u$ in general does not represent $f: X \to \R$. \hfill\qed
 
\begin{corollary}\label{cor2.1} For the vector fields $v$ that have Morin normal forms as in formula (\ref{eq2.5}) (as in Theorem \ref{th2.1}), all the strata $\d_j X$ are manifolds,  and the embeddings $\d_j X \subset \d_{j-1}X$ are regular. Moreover, these fields  are generic in the sense of Definition \ref{def2.1}.

In the vicinity of each $a \in \d_1X$ of the $s$-type, we get $\d_{s + 1}X = \emptyset$, so that the $s$-type fields are both boundary $(s+1)$-convex and $(s+1)$-concave.
\end{corollary}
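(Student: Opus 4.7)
The plan is to translate the abstract stratification of Definition \ref{def2.1} into an explicit linear-algebra statement about the gradients of the iterated $u$-derivatives of $P_s$. By Theorem \ref{th2.2}, near each point $a \in \d_1 X$ of $s$-type the strata $\d_j X$ are cut out in $\R^{n+1}$ by the system $\{P_s^{(i)} = 0\}_{i < j}$, so the whole corollary reduces to checking that these $j$ equations are independent at each point of their common zero locus and that the Morse-type transversality of $v = e_{n+1}$ as a section of $T_j^\nu$ holds inductively.

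The central computation is the structure of the gradients in the coordinates $(u,x_0,x_1,\dots,x_{n-1})$. Since
$$P_s^{(k)}(u,x) \;=\; \tfrac{s!}{(s-k)!}\,u^{s-k} \;+\; \sum_{i=k}^{s-2}\tfrac{i!}{(i-k)!}\,x_i\, u^{i-k},$$
the gradient $\nabla P_s^{(k)}$ carries $P_s^{(k+1)}$ in the $u$-slot, $\tfrac{\ell!}{(\ell-k)!}u^{\ell-k}$ in the $x_\ell$-slot for $k\le\ell\le s-2$, and $0$ in the $x_\ell$-slot for $\ell<k$. Reading off these $x$-components for $k=0,1,\dots,j-1$ yields an upper triangular $j\times j$ block with diagonal $0!,1!,\dots,(j-1)!$ whenever $j \le s-1$. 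For the borderline case $j=s$, the final row $\nabla P_s^{(s-1)} = (s!,0,\dots,0)$ has only a $u$-component; cofactor expansion along this row shows that the $s\times s$ minor in columns $(u,x_0,\dots,x_{s-2})$ has nonzero determinant $\pm s!\prod_{k=0}^{s-2}k!$. Hence the gradients $\{\nabla P_s^{(i)}\}_{i<j}$ are linearly independent along $\d_j X$ for every $j \le s$, so each $\d_j X$ is a smooth submanifold of codimension $j$ in $\R^{n+1}$, and adding the single transverse equation $P_s^{(j-1)} = 0$ to pass from $\d_{j-1}X$ to $\d_j X$ makes the inclusion $\d_j X \subset \d_{j-1} X$ a regular (codimension one) embedding.

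With this rank statement in hand the four bullets of Definition \ref{def2.1} fall out quickly. The first two are automatic because $v = e_{n+1}$ is a nonvanishing constant field. The submanifold condition in the third bullet is exactly what was just proved. For the fourth bullet, trivialize the line $T_j^\nu$ at a point of $\d_j X$ via the direction of $\nabla P_s^{(j-1)}$; then the class $[v]$ corresponds to the scalar $v\cdot\nabla P_s^{(j-1)} = P_s^{(j)}$, and transversality of this section to zero on $\d_j X$ is the statement that $\nabla P_s^{(j)}$ is not in the span of $\{\nabla P_s^{(i)}\}_{i<j}$ on the zero locus of $P_s^{(j)}$, again supplied by the same triangular structure (the $x_j$-entry of $\nabla P_s^{(j)}$ equals $j!$ and is absent from every previous row). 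Finally, $P_s^{(s)} = s!$ is a nonzero constant, so the defining equation of $\d_{s+1} X$ is never satisfied in the chart at $a$; thus $\d_{s+1}X$ is locally empty, and the $s$-type field is both boundary $(s+1)$-convex and $(s+1)$-concave in the vacuous sense. The only step requiring any real care is the borderline rank calculation at $j=s$, where upper triangularity fails and must be replaced by the single cofactor expansion described above; everything else is essentially bookkeeping around that observation.
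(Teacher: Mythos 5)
Your proof is correct and follows essentially the same route as the paper's: invoke Theorem~\ref{th2.2} to realize $\d_jX$ as the common zero locus of $P_s, P_s^{(1)}, \dots, P_s^{(j-1)}$, and then check linear independence of the gradients $\nabla P_s^{(i)}$ using formula~(\ref{eq2.7}) and its iterated $u$-derivatives; the explicit upper-triangular block plus the cofactor expansion at $j=s$ just make fully concrete the linear-algebra assertion that the paper states more briefly. One small misstatement: in the parenthetical for the fourth bullet you write that the $x_j$-entry "is absent from every previous row," but in fact the $x_j$-entry of $\nabla P_s^{(i)}$ for $i<j$ equals $\tfrac{j!}{(j-i)!}u^{j-i}$, which is generally nonzero; what actually carries the argument (as your earlier computation already shows) is the upper-triangular structure of the block in columns $x_0,\dots,x_j$ --- zeros \emph{below} the diagonal and nonzero constant diagonal entries $0!,\dots,j!$ --- which is enough to conclude that $\nabla P_s^{(j)}$ lies outside the span of $\{\nabla P_s^{(i)}\}_{i<j}$.
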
 

\begin{proof} According to Theorem \ref{th2.2},  $\d_jX$ is given by the system of equations $$\{P_s = 0,\, P_s^{(1)} = 0,\, \dots \, ,\, P_s^{(j-1)} = 0\}.$$ It follows from formulas (\ref{eq2.7}) and (\ref{eq2.8})  that the gradient vector fields $\nu_1, \dots \nu_j$, where $\nu_k = \nabla(P_s^{(k-1)})$,  are linearly independent. Therefore, $\d_jX$ is a submanifold of $\d_1X$. Moreover, since $\nu_j$ is a section of the normal line bundle $T(X_{j -1})/ T(X_j)$ which is transversal to its zero section and since $e_{n+1} = \sum_k (e_{n+1} \cdot \nu_k)\nu_k$, the field $e_{n+1}$ is boundary generic in the sense of Definition \ref{def2.1}.

To validate the last claim, note that $P_s^{(s)} \neq 0$, which implies that $\d_{s+1}X = \emptyset$. 
\end{proof}

\noindent {\bf Remark 2.2.} For local models as in Theorem \ref{th2.2}, the germs of intersections of the strata $\d_jX$, $j \in [1, n+1]$, with the hypersurfaces of $\{u = c\}$ are affine subspaces of $\R^{n+1}$, while the germs of $\d_j^+X \cap \{u = c\}$ are affine half-spaces. 
Indeed, for each fixed value of $u$ and any $j$, the equations $\{P_s^{(i)} = 0\}_{ i < j}$ impose \emph{linear} constraints on the rest of the variables;  similarly, $\pm P_s^{(j)} \geq 0$ is a linear inequality in $x_0, \dots , x_{n-1}$. Therefore, the equations define a \emph{ruled} real variety, and the inequality picks a semi-algebraic subvariety, ruled by the half-spaces. \hfill\qed
\smallskip

\begin{figure}[ht]
\centerline{\includegraphics[height=4in,width=4in]{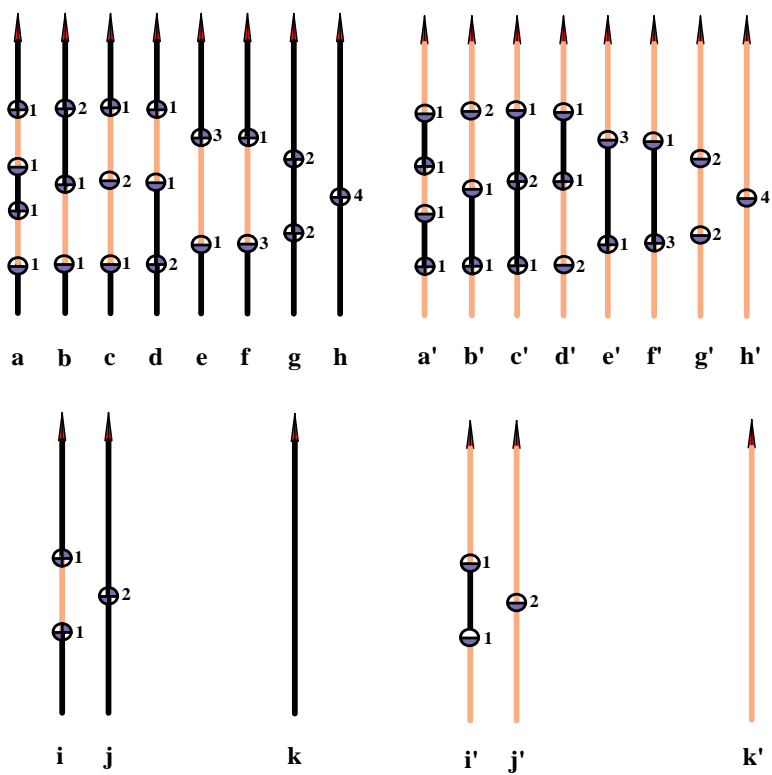}}
\bigskip
\caption{\small{The patterns of solutions for $P_4 \geq 0$ (on the left)  and for $P_4 \leq 0$ (on the right). The numbers 1, 2, 3, 4 indicate the multiplicities of the roots. Diagrams $i, j, i', j'$ correspond to the case of one pair, and diagrams $k, k'$ to the case of  two pairs of complex conjugate roots.}}
\end{figure}
\smallskip

\noindent {\bf Example 2.1} For a 4-dimensional $X$, the eight local models  $$u: \{\pm P_s \geq 0\} \to \R$$, where $s \in [1,4]$ and $v = e_4$\footnote{To save space, we do not list the other eight cases with $v= -e_4$.}, are given by the four polynomials:

\begin{itemize}
\item $P_1 =  u$, 
\item $P_2 =  u^2 + x_0$,
\item $P_3 =  u^3 + x_1 u + x_0$,
\item $P_4 =  u^4 + x_2 u^2 + x_1 u + x_0$.
\end{itemize}

Let us consider the fibers $\pi^{-1}(x) \cap X$ of the projection $\pi: \R^4 \to \R^3$ in the vicinity of the origin for the $P_4$-model. Since the coefficients of $P_4(\sim, u)$ are real, the divisor $D(P_4) \subset \R$  is: (i) either real of degree four, or (ii) real of degree two or (iii) real of degree zero. Note that, in case (i), the sum of all real roots, taken with their multiplicities, is zero. 

The $P_4$-model is described by the diagrams a---k and a'---k' in Fig. 1. In the figure, we do not stress the balanced nature of the divisors $\{P_4 = 0\}$.  The shaded portions of the number lines in the figure belong to the 4-fold $X$; in fact, one can think of $X$ as being disjoint union of these shaded portions. Note the polarities $\{+, -\}$ attached to each root of $P_4$: they reflect the polarities in the Morse stratification $\d_j^\pm X$. By the Vi\`{e}te Formula, each divisor in diagrams a---k and a'---k' determines a unique point $x = (x_0, x_1, x_2)$ over which it resides. \hfill\qed

\section{Traversally Generic and Versal Fields}

Guided by the geometry of  the local models  from Theorem \ref{th2.2}, which describes the ways in which vector fields on $X$ interact with its boundary $\d_1X$, we embark on an investigation of the ``semi-local" dynamics of generic gradient-like \emph{nonsingular} $v$-flows. 
\smallskip

Here and on, we assume that each $v$-trajectory does not ``end" at a point where it is tangent to the boundary; when possible, it ``extends further" in the interior of $X$. Also, for technical reasons, we do regard a singleton $x$, the flow ``curve" through $x \in \d_2^-X$, as a trajectory.\smallskip

For boundary generic (see Definition \ref{def2.1}) vector fields fields $v$---elements of the space $\mathcal V^\dagger(X)$---, we associate an ordered sequence of  multiplicities with each trajectory $\g$, such that $\g \cap \d_1X$ is a finite set.  For any $v \in \mathcal V^\dagger(X)$, the intersection $\{\a_i\} := \g \cap \d_1X$ is automatically a finite set. For traversing (see Definition 4.6 from \cite{K1}) generic fields, the points $\{a_i\}$ of the intersection $\g \cap \d_1X$  are ordered by the field-oriented trajectory $\g$, and the index $i$ reflects this ordering.

\begin{definition}\label{def3.1} Let $v \in \mathcal V^\dagger(X)$ be a generic field. Let $\g$ be a  $v$-trajectory which intersects the boundary $\d_1X$ at a finite number of points $\{a_i\}$. Each point $a_i$ belongs to a unique pure stratum $\d_{j_i}X^\circ$. 

The \emph{multiplicity} $m(\g)$ of $\g$ is defined by the formula
\begin{eqnarray}\label{eq3.1}
m(\g) = \sum_i\, j_i
\end{eqnarray}
The \emph{reduced multiplicity} $m'(\g)$ of $\g$ is defined by the formula
\begin{eqnarray}\label{eq3.2}
m'(\g) = \sum_i (j_i -1)
\end{eqnarray}
, and the \emph{virtual multiplicity} $\mu(\g)$ of $\g$ is defined by 
\begin{eqnarray}\label{eq3.3}
\mu(\g) = \sum_i \Big\lceil\frac{j_i}{2}\Big\rceil
\end{eqnarray}
, where $\lceil\sim\rceil$ denotes the integral part function. \hfill\qed
\end{definition}

For an open and dense subspace $\mathcal V^\ddagger(X)$ of $\mathcal V^\dagger(X)$, one can interpret $\mu(\g)$ as the maximal number of tangency points that any trajectory $\g'$ in the vicinity of $\g$ may have (see Theorem \ref{th3.4}).
\smallskip

When $v$ is nonsingular on $\d_1X$, we can extend it into a larger manifold $\hat X$ so that $\hat X$ properly contains $X$ and the extension $\hat v$ remains nonsingular in in the vicinity of $\d_1X \subset \hat X$. Throughout this text, we treat the pair $(\hat X, \hat v)$ as a germ which extends $(X, v)$.
\smallskip

When $v \in \mathcal V^\dagger(X)$, each set $\d_jX^\circ$ is a manifold.
\smallskip 

Consider the collection of tangent spaces $\{T_{a_i}(\d_{j_i}X^\circ)\}_i$ to the pure strata $\{\d_{j_i}X^\circ\}_i$ that have a non-empty intersection with a given trajectory $\g$. 
 By Theorem \ref{th2.2}, each space $T_{a_i}(\d_{j_i}X^\circ)$ is transversal to the curve $\g$. 
 \smallskip 

Let $S$ be a local section of the $\hat v$-flow at some point $a_\star \in \g$ and let $\mathsf T_\star$ be the  space tangent to $S$ at $a_\star$. Each space $T_{a_i}(\d_jX^\circ)$, with the help of the $\hat v$-flow, determines a vector subspace $\mathsf T_i = \mathsf T_i(\g)$ in  $\mathsf T_\star$. It is the image of  the tangent space $T_{a_i}(\d_jX^\circ)$ under the composition of two maps: (1) the differential of the flow-generated diffeomorphism that maps $a_i$ to $a_\star$ and (2) the linear  projection $T_{a_{\star}}(X) \to \mathsf T_\star$ whose kernel is generated by $v(a_\star)$. 

For a traversing $v$ and a majority  of trajectories, we can choose the space $T_{a_\star}(\d_1^+X)$ for the role of $\mathsf T_\star$, where $a_\star$ is the lowest point of $\g \cap \d_1X$. 
\smallskip

The configuration $\{\mathsf T_i\}$ of \emph{affine} subspaces  $\mathsf T_i \subset \mathsf T_\star$ is called \emph{generic} (or \emph{stable}) when all the multiple intersections of spaces from the configuration have the least possible dimensions consistent with the dimensions of $\{\mathsf T_i\}$. In other words, $$\textup{codim}(\bigcap_{s} \mathsf T_{i_s},  \mathsf T_\star) = \sum_s \textup{codim}(\mathsf T_{i_s},  \mathsf T_\star)$$ for any subcollection $\{\mathsf T_{i_s}\}$ of spaces from the list $\{\mathsf T_i\}$.

Consider the case when $\{\mathsf T_i\}$ are \emph{vector} subspaces of $\mathsf T_\star$.  If we interpret each $\mathsf T_i$ as the kernel of a linear epimorphism $\Phi_i:  \mathsf T_\star \to \R^{n_i}$, then the property of  $\{\mathsf T_i\}$ being generic can be reformulated as the property of  the direct product map $\prod_i \Phi_i:  \mathsf T_\star \to \prod_i  \R^{n_i}$ being an epimorphism.  In particular, for a generic configuration of affine subspaces, if a point  belongs to several  $\mathsf T_i$'s, then the sum of their codimensions $n_i$ does not exceed the dimension of the ambient space $\mathsf T_\star$. 

 \smallskip
\noindent {\bf Example 3.1} The configuration of a line and a plane in $\R^3$, the line being transversal to the plane, is generic; and so is the configuration of two planes in $\R^3$ which share only a line. On the other hand, two lines in $\R^3$ which share a point do not form a generic configuration. Also, any three lines in $\R^2$ which share a point do not form a generic configuration. \hfill\qed
\smallskip

The definition below resembles and is inspired by the ``Condition NC" imposed on, so called, \emph{Boardman maps} between smooth manifolds (see \cite{GG}, page 157, for the relevant definitions). In fact, for  traversing generic fields $v$, the $v$-flow delivers germs of Boardman maps $p(v, \g): \d_1X \to \R^n$, available in the vicinity of every trajectory $\g$. 

\begin{definition}\label{def3.2} We say that a traversing field $v$ on $X$ is \emph{traversally generic} if: 
\begin{itemize}
\item  the field is boundary generic in the sense of Definition \ref{def2.1},
\item for each $v$-trajectory $\g \subset X$ (not a singleton), the collection of  subspaces $\{\mathsf T_i(\g)\}_i$  is generic in $\mathsf T_\star$: that is, the obvious quotient map $\mathsf T_\star \to \prod_i \big(\mathsf T_\star/ \mathsf T_i(\g)\big)$ is surjective.   
\end{itemize}

We denote by $\mathcal V^\ddagger(X)$ the space of all traversally generic fields on $X$. \hfill\qed
\end{definition}

\noindent{\bf Remark 3.1.} In particular, the second bullet of the definition implies the inequality $$\sum_i \textup{codim}(\mathsf T_i(\g), \mathsf T_\star) \leq \dim(\mathsf T_\star) = n.$$   In other words, for traversally generic fields, the reduced multiplicity of each trajectory $\g$ satisfies the inequality
\begin{eqnarray}\label{eq3.4}
m'(\g) = \sum_i (j_i - 1) \leq n.
\end{eqnarray}

Evidently the property of the configuration $\{\mathsf T_i(\g)\}_i$ being generic in $\mathsf T_\star$ does not depend on the choice of the section $S$. \hfill\qed
\smallskip

\noindent{\bf Remark 3.2.} Note that, if a smooth submersion $F: (\tilde X, \d \tilde X) \to (X, \d X)$ is a finite covering of $(n+1)$-manifolds, then the pull-back $\tilde v$ of a traversally generic field $v$ on $X$ is a traversally generic field on $\tilde X$. Indeed, each $v$-trajectory $\g$ is a segment or a singleton. Therefore $F^{-1}(\g)$ is a disjoint union of arcs or points. These are trajectories of the pull-back field $\tilde v$ on $\tilde X$.  Each of the lifted trajectories has a $\tilde v$-adjusted neighborhood $U_{\tilde\g}$ that projects by the local diffeomorphism $F$ onto a $v$-adjusted neighborhood $U_\g$. That field-respecting diffeomorphism  $F: U_{\tilde\g} \to U_\g$  maps $\d\tilde X \cap U_{\tilde\g}$ to $\d X \cap U_\g$, so that all the structures participating in Definition \ref{def3.2} are respected by $F$.

\hfill\qed
\smallskip

Let us review  the list of notations for various kinds of vector fields on $X$. Recall that we have introduced the following nested collection of spaces: 
$$\mathcal V^\ddagger(X) \subset  \mathcal V^\dagger(X) \subset \mathcal V(X)$$
based on traversally generic, boundary generic, and arbitrary smooth vector fields on $X$, respectively. We denote by $\mathcal V_{\neq 0}(X)$ the space of non-vanishing fields on $X$.  We are also considering the space $\mathcal V_{\mathsf{grad}}(X)$ of gradient-like fields. Finally, $\mathcal V_{\mathsf{trav}}(X)$ denotes the space of traversing fields (see Definition 4.6 in \cite{K1}). 

In view of Corollary 4.1 from \cite{K1} the following inclusions follow from the definitions:
\begin{eqnarray}\label{eq3.5}
\mathcal V_{\mathsf{grad}}(X) \cap \mathcal V_{\neq 0}(X) = \mathcal V_{\mathsf{trav}}(X) \subset \mathcal V_{\neq 0}(X), 
\end{eqnarray}

\begin{eqnarray}\label{eq3.6}
\mathcal V^\ddagger(X)  \subset \mathcal V_{\mathsf{trav}}(X) \cap \mathcal V^\dagger(X)
\end{eqnarray}

Recall also that, by the Phillips Theorem B \cite{Ph}, for a fixed Riemannian metric $g$ on $X$, the gradient map $$\nabla_g: Sub(X, \R) \to \mathcal V_{\mathsf{trav}}(X) \subset  \mathcal V_{\neq 0}(X)$$, where $Sub(X, \R)$ denotes the space of submersions $f: X \to \R$, is a \emph{weak homotopy equivalence} between the spaces $Sub(X, \R)$ and $\mathcal V_{\neq 0}(X)$. 

One might speculate that $\nabla_g: Sub(X, \R) \to \mathcal V_{\mathsf{trav}}(X)$ is a weak homotopy equivalence as well. We can prove  (see Corollary 4.1 \cite{K2}) that if two gradient-like fields can be connected by a path in the space of all non-vanishing fields, then they can be connected by a path in the space of all non-vanishing gradient-like fields as well.
Therefore, at least the map $$(\nabla_g)_\ast: \pi_0(Sub(X, \R)) \to \pi_0(\mathcal V_{\mathsf{trav}}(X))$$ is bijective.  
\smallskip

It turns out that, for generic fields $v \in \mathcal V^\dagger(X)$, the way in which each trajectory $\g$ intersects with the Morse strata $\{\d_kX^\circ\}_k$ reflects ``\emph{the order of tangency}" between $\g$ and $\d_1X$. The fundamental lemma below reflects this fact. 

We embed the pair $(X, v)$ into a pair $(\hat X, \hat v)$ in a way that has been described previously.   

\begin{lemma}\label{lem3.1} Assume that $v \in \mathcal V^\dagger(X)$. Denote by $\g_a$ the $\hat v$-trajectory through a point $a \in X$. Let $z: \hat X \to \R$ be a smooth function in the vicinity of $\d_1X \subset \hat X$  such that:  
\begin{enumerate}
\item $0$ is a regular value of $z$,   
\item $z^{-1}(0) = \d_1X$, and 
\item $z^{-1}((-\infty, 0]) = X$. 
\end{enumerate}

Then the following properties hold:
\begin{itemize}
\item For each point $a \in \d_kX^\circ$, the restricted function  $z|_{\g_a}$ has  zero of multiplicity $k$ at the point $a$. 
\item In the vicinity of $a$ in $\hat X$, there exists a coordinate system $(u, x)$, where $u \in\R$ and $x \in \R^n$,  so that: 

1) each $v$-trajectory  $\g$ is defined by an equation $\{x = \vec{const}\}$,

2) the boundary $\d_1X$ is defined by the equation
\begin{eqnarray}\label{eq3.7} 
u^k + \sum_{j =0}^{k -2} \; x_j\, u^j = 0
\end{eqnarray} 
\item In the vicinity of $a \in \d_kX^\circ$, each $v$-trajectory $\g$ hits only some strata $\{\d_jX^\circ\}_{j \in J(a)}$ in such a way that  
$\sum_{j \in J(a)} j \leq k$ and $\sum_{j \in J(a)} j \equiv k\; (2)$. 
\end{itemize} 
\end{lemma}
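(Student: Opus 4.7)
The plan is to reduce the lemma to Morin's normal form (Theorem \ref{th2.1}, reformulated via Theorem \ref{th2.2}) applied at the point $a$, and then read off all three bullets from the resulting univariate polynomial $P_s(\,\cdot\,, x)$ and its $u$-derivatives. Specifically, once coordinates are chosen so that $\hat v$ becomes $\d/\d u$ and $\d_1 X$ becomes the zero set of $P_s(u,x) = u^s + \sum_{j=0}^{s-2} x_j u^j$, all three conclusions become elementary statements about a depressed polynomial of degree $s$.

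The first step is to apply Theorem \ref{th2.1} to obtain, at $a$, local flow-adjusted coordinates $(u, x_0, \dots, x_{n-1})$ on $\hat X$ in which $\hat v = \d_u$ and $\d_1 X = \{P_s = 0\}$; the $v$-trajectories are then the lines $\{x = \text{const}\}$, which gives item (1) of the second bullet. To pin down the integer $s$ produced by Morin's theorem, I would use the description of the Morse strata from Theorem \ref{th2.2}: at the origin $P_s^{(i)}(0,0) = 0$ for $i < s$ while $P_s^{(s)}(0,0) = s!$, so the origin lies in $\d_s X^\circ$; since $a \in \d_k X^\circ$ by hypothesis and the pure strata are disjoint, $s = k$. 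This finishes the second bullet. For the first bullet, I would compare $z$ with $\pm P_k$: both are defining functions for $\d_1 X$ with $0$ as a regular value, and by hypothesis (3) the sign can be chosen so that $\{z \le 0\}$ and $\{\pm P_k \le 0\}$ agree near $a$. A standard division argument then gives a smooth positive function $E$ with $z = \pm E \cdot P_k$ near $a$, and restricting to $\g_a = \{x = 0\}$ produces $z|_{\g_a}(u) = \pm E(u,0)\, u^k$, vanishing to exact order $k$ at $u = 0$.

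For the third bullet, I would fix a nearby $v$-trajectory $\g = \{x = c\}$ and identify its hits with $\d_1 X$ with the real roots of the univariate polynomial $P_k(\,\cdot\,, c)$; applying the same characterization of strata at each real root, a root of multiplicity $j$ places the corresponding intersection point in $\d_j X^\circ$. Since $P_k(\,\cdot\,, c)$ has degree $k$ and its non-real roots come in complex conjugate pairs of equal multiplicity, the total real-root multiplicity $\sum_{j \in J(a)} j$ is at most $k$ and congruent to $k$ modulo $2$. The main obstacle I anticipate is the very first step, not the subsequent polynomial analysis: to invoke Morin's theorem one needs the normal form to hold for the specific $v \in \mathcal V^\dagger(X)$ at hand, not merely for a $G_\delta$-subset of fields. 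What is required is that the stratified transversality in Definition \ref{def2.1} be equivalent to, not merely implied by, the Boardman-type regularity underlying Morin's theorem. Once this equivalence is granted (and Corollary \ref{cor2.1} already uses one direction), the rest is a direct inspection of $P_k$.
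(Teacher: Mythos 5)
You have correctly identified the crux of the problem, but the issue you flag is not a technicality to be ``granted''---it is precisely what the lemma has to establish, so the plan as written is circular. Morin's Theorem~\ref{th2.1} applies only to a residual $G_\delta$-set of fields; Lemma~\ref{lem3.1} is the statement that \emph{every} $v \in \mathcal V^\dagger(X)$ (boundary generic in the sense of Definition~\ref{def2.1}) admits the Morin-type coordinates of bullet two. Corollary~\ref{cor2.1} gives only the easy implication (Morin form $\Rightarrow$ boundary generic); the converse is exactly what you would be assuming at the outset if you ``apply Theorem~\ref{th2.1} to obtain local flow-adjusted coordinates $(u,x)$ in which $\d_1X = \{P_s = 0\}$.'' So your outline proves nothing new for bullet two, and bullet one as you sketch it (factoring $z = \pm E\cdot P_k$) also presupposes the existence of those coordinates.

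The paper avoids this circularity by proving bullet one first, with no normal form at all. It works in boundary-adapted coordinates $(z,x)$, defines $\psi_1 := u$, $\psi_j := \langle\nabla_x\psi_{j-1}, w\rangle$ from the components of $v = (u,w)$, and shows by induction using formula~(\ref{eq3.12}) that $\frac{d^j z}{dt^j}\big|_\g \equiv \psi_j\big|_\g$ modulo the ideal generated by $\psi_1,\dots,\psi_{j-1}$; since $a \in \d_kX^\circ$ means $\psi_1(a)=\dots=\psi_{k-1}(a)=0$, $\psi_k(a)\neq 0$, this yields the multiplicity-$k$ vanishing directly. Only then, in flow-adjusted coordinates $(u,y)$, does it invoke the Malgrange Preparation Theorem to write $z = P\cdot Q$ with $P$ a degree-$k$ Weierstrass polynomial, and the genuinely new content is the verification---via formulas~(\ref{eq3.14})--(\ref{eq3.15}) and Lemma~\ref{lem3.3}---that $v \in \mathcal V^\dagger(X)$ forces $\mathrm{rk}(D\Phi(0)) = k-1$, which in turn permits completing the $\phi_j$'s to Morin coordinates. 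This rank computation is the content you would need to supply to close your gap; it is not supplied by anything upstream of the lemma. Your treatment of bullet three (real roots come in conjugate pairs, so the total real multiplicity is $\leq k$ and $\equiv k \bmod 2$) does agree with the paper once bullet two is in hand, though the paper also invokes Lemma~\ref{lem3.2} to localize the roots to a small interval.
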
 

\begin{proof}  Consider the space $\R^1 \times \R^n$ with coordinates $(z, x) := (z, x_1, \dots x_n)$ and the standard Euclidean scalar product $\langle \sim , \sim \rangle$. Let the sets $M_0 := \{z \geq 0\}$ and $M_1 := \{z = 0\}$ represent the germ of the pair $(X, \d_1X)$ in the vicinity of a typical point in $\d_1X$. 

Consider a smooth vector field $v(z,  x) = \big(u(z, x),  w(z,  x)\big)$ on  $\R^1 \times \R^n$, where the component $u(z,  x)$ is parallel to $\R^1$, and  $w(z, x)$ to $\R^n$. 

In the argument to follow, we view the coordinates $(z, x)$ as  ``more permanent ingredients", while  analyzing the restrictions on $u(z,  x), w(z, x)$ imposed by the desired property of the field $v(x, z)$ being boundary generic. In fact, we can allow for some changes in the coordinates $(z, x)$ as well, as long as the locus $\{z = 0\}$ (which models the boundary $\d_1X$) remains fixed. 

The field $v$ is tangent to $M_1 \subset \R^1 \times \R^n$ along the locus $$M_2 := \{z = 0,\;  u(z,  x) = 0\}$$ which models $\d_2X := \d_2X(v)$. To reflect the generic nature of $v$, we require that $u(0, x)$, viewed as a section of the obvious projection $\R^1 \times \R^n \to \R^n$, will be transversal to $M_1$ along $M_2$. In other words, the gradient $\nabla_x u$ of the function $u(0, x)$ with respect to the coordinates $x$ should not vanish along the manifold $M_2$. The locus $M_3$, where $v$ is tangent to $M_2$, is given by two equations $\{z = 0\}$ and $\{\langle \nabla_x u ,  w \rangle = 0\}$, while the transversality of $v$ to $M_3$ can be expressed as the condition of linear independency of the two fields, $\nabla_x u$ and $\nabla_x \langle \nabla_x u, w \rangle$. The set  $$M_1^+ := \{x| \; z= 0,\, u \geq 0\} \subset M_1$$ mimics the locus $\d_1^+X$, while the set $$M_2^+ :=  \{x |\; z = 0,\, u = 0,\, \langle \nabla_x u , w\rangle \geq 0\}\subset M_2$$ mimics $\d_2^+X$.

In order to capture the emerging pattern, for each pair of smooth maps $u: \R^1 \times \R^n \to \R$  and $w: \R^1 \times \R^n \to \R^n$, we introduce a sequence of new functions $\psi_k : \R^1 \times \R^n \to \R^1$ in the variables $z, x$ by the recursive formula:
\begin{eqnarray}\label{eq3.8}
\psi_1(z, x) & : = & u(z, x),  \nonumber \\
\psi_k(z, x) & := & \langle \nabla_x \psi_{k-1}(z, x),\, w(z, x) \rangle .
\end{eqnarray}
For $k > 0$, locus $M_k$, a model of $\d_kX$, is given by the equations
\begin{eqnarray}\label{eq3.9}
\{z = 0,\; \psi_1(z, x) = 0,\; \dots \, , \; \psi_{k-1}(z, x) = 0\}
\end{eqnarray}
, together with the requirement that on the solution set $\{(0, x)\}$ of equation (\ref{eq3.9}) the vector fields  
\begin{eqnarray}\label{eq3.10}
\Big\{\nabla_x \psi_1(0, x),\; \dots ,\; \nabla_x \psi_{k-1}(0, x)\Big\}
\end{eqnarray} 
are linearly independent (hence, $M_k$ is a manifold). Thus the linear independence of the the gradient fields in (\ref{eq3.10}), where $2 \leq k \leq n+1$,  is equivalent to $v$ \emph{locally} belonging to the space  $\mathcal V^\dagger(X)$.

The submanifold $M_k^+ \subset M_k$ is defined by the additional inequality $\{\psi_k(z, x) \geq 0\}$: there the field $v$ points inside of $M_{k-1}^+$.
\smallskip 

Consider the following system of ordinary differential equations, determined by the field $v(z, x)$: 
\begin{eqnarray}\label{eq3.11}
\frac{dz}{dt} & = & u(z, x) \nonumber \\
\frac{dx}{dt} & = & w(z, x)
\end{eqnarray}

Take a point $x \in M_k^\circ := M_k \setminus M_{k+1}$. Let $\g(t)= (z(t), x(t))$ be the solution of (\ref{eq3.11}) such that $\g(0) = (x, 0)$. 
In order to prove the lemma, we need to verify  that  the function $z$, being restricted to $\g(t)$, has $x$ as its zero of multiplicity $k$. 

For any $j$, with the help of equations (\ref{eq3.8}) and (\ref{eq3.11}), we get 
 $$\frac{d \psi_j }{dt} = \frac{\d \psi_j}{\d z}\cdot \frac{dz}{dt} +  \Big\langle \nabla_x \psi_j,   \frac{d x}{dt}\Big\rangle  = \frac{\d \psi_j}{\d z}\cdot \psi_1 + \psi_{j+1}$$
Put $\xi_j := \frac{\d \psi_j}{\d z}$. Then the formula above can be rewritten as 
\begin{eqnarray}\label{eq3.12}
\frac{d \psi_j }{dt} = \xi_j \cdot \psi_1 + \psi_{j+1}
\end{eqnarray}

Next, we claim that the function $\frac{d^j z}{(dt)^j}$, being restricted to the integral curve $\g$, can be represented as $\psi_j + \sum_{i =1}^{j-1} \b_i \cdot \psi_i$ for an appropriate choice of smooth functions $\b_i = \b_i(z, x)$. In other words, the functions $\frac{d^j z}{(dt)^j}|_\g$ and $\psi_j|_\g$ are congruent modulo the ideal of $C^\infty(\g, \R)$ generated by $\{\psi_i|_\g\}_{i < j}$. In view of equations  (\ref{eq3.9}),  this  implies that $\frac{d^j z}{(dt)^j}(0) = 0$ for all $j < k$. Since $x \notin  M_{k+1}$ translates as $\psi_k(x) \neq 0$, it  follows that $\frac{d^k z}{(dt)^k}(0) \neq 0$. So  $z|_\g$  indeed has zero of multiplicity $k$ at $\g(0)  \in M_k^\circ$.

We proceed to prove the formula $$\frac{d^j z}{(dt)^j}\big|_\g = (\psi_j + \sum_{i =1}^{j-1} \b_i \cdot \psi_i)\big |_\g$$ by induction in $j$. The inductive step $j \Rightarrow j+1$ is carried out by differentiating with respect to $t$ the identity conjectured for $j$ and then by using formulas (\ref{eq3.12}). The basis of induction is represented by the claim $\frac{dz}{(dt)}|_\g = {\psi_1}|_\g := u|_\g$, the first equation in (\ref{eq3.11}). 

Finally, we notice that all these arguments do not depend on the choice of the auxiliary function $z$, subject to the properties $(1)$-$(3)$ described in the lemma. This proves the claim in the first bullet of the lemma.
\smallskip

Next, we switch to new local coordinates $(u, y)$, amenable to the $v$-flow (in contrast with the previous coordinates $(z, x)$ which were adjusted to the boundary $\d_1X$). Consider a point $a \in M_k^\circ$ and a germ of a $v$-trajectory $\g$ passing through $a$. Let $S$ be a germ of a smooth transversal  flow section at $a$, equipped with coordinates $y := (y_1, \dots , y_n)$, such that $a$ is the origin.  Let $u$ be the coordinate obtained by integrating $v$ so that  $S$ is the locus $\{u = 0\}$.  The procedure for producing the coordinate $u$ (under the name ``$\int_S^x v$") has been described in the proof of Lemma 4.1 from \cite{K1}.

Now we view $z: \hat X \to \R$ as a smooth function of $(u,  y)$. We have shown that $z|_{\g} = z(u, 0)$ has a zero at $u = 0$ of multiplicity $k$. Using the Malgrange  Preparation Theorem \cite{Mal},  there exist an invertible germ of a  smooth function $Q(u, y)$ and a germ of the form $$P(u, y) = u^k + \sum_{j =0}^{k -1} \; \phi_j(y) u^j$$, where $\phi_j(0) = 0$, so that $z = P \cdot Q$. Due to this factorization, in the coordinates $(u, y)$, the locus $\{z = 0\}$---the boundary $\d_1X$---is given by an equation $u^k + \sum_{j =0}^{k -1} \; \phi_j(y) u^j = 0$. We can improve  the latter equation by modifying the coordinates $(u, y)$ (by re-parametrizing the trajectories): just put $\tilde u = u + \frac{1}{k} \phi_{k-1}(y)$ and $\tilde y = y$. Then, in the new coordinates, the equation $z = 0$ is transformed into the equation $$\tilde u^k + \sum_{j =0}^{k - 2} \; \tilde\phi_j(y)\tilde u^j = 0$$, a ``surrogate" of the Morin canonical form (\ref{eq2.2}). 

We still need to bridge the gap between the surrogate and true Morin's canonical forms. With this goal in mind, consider the Jacobi matrix $D\Phi(y)$ formed by the partial derivatives $\big\{\frac{\d}{\d y_i } \tilde\phi_j(y)\big\}_{i, j}$. If $D\Phi(0)$ has the maximal possible rank $k  -1$, then we can choose the functions $\{ \tilde\phi_j(y)\}_{0 \leq j < k- 2}$ for the role of the first new coordinates in the vicinity of the origin. They can be completed to a coordinate system comprising the function $\tilde u$ together with some functions $\{\tilde y_i\}_{k \leq i \leq n}$, drawn from the list of $\{y_i\}_{1 \leq i \leq n}$. 

As before, each trajectory $\g$ is produced by freezing all the coordinates, but  $\tilde u$.  In the new coordinates $(\tilde\phi_0,  \dots , \tilde\phi_{k-2}, \tilde y_{k}, \dots , \tilde y_{n}, \tilde u)$,  the germ of $\d_1X$ at $a$ is given by the depressed polynomial equation $\tilde u^k + \sum_{j = 0}^{k - 2} \; \tilde\phi_j\tilde u^j = 0$, the true Morin  canonical form. Therefore $\textup{rk} (D\Phi(0)) = k - 1$ implies the existence of the Morin canonical coordinates in the vicinity of $a$. 

In fact (see Lemma \ref{lem3.3}), the requirement $\textup{rk} (D\Phi(0)) = s$ has an intrinsic meaning. In particular, it is independent on the factorization $z = P \cdot Q$. 
\smallskip

Now consider a trajectory $\g := \{y = c\}$, where $c := (c_1, \dots , c_n)$ is a fixed point, in the vicinity of $a \in \d_1X \cap \g$ and the zeros of the function $z$, being restricted to $\g$. We claim that the two loci 
$$\big\{z(u, c) = 0,\, \frac{\d}{\d u}z(u, c) = 0,\; \dots \, ,\;  \frac{\d^j}{\d u^j}z(u, c) = 0\big\},$$  
\begin{eqnarray}\label{eq3.13}
\big\{P(u, c) = 0,\, \frac{\d}{\d u}P(u, c) = 0,\; \dots ,\;  \frac{\d^j}{\d u^j}P(u, c) = 0\big\} 
\end{eqnarray}
coincide. The argument,  inductive in $j$,  employes the fact that $Q(u, c) \neq 0$.

Since we have established the validity of the first bullet in Lemma \ref{lem3.1}, we get that both systems of equations in (\ref{eq3.13}) locally define the stratum $\d_{j +1}X$.  Moreover, as in the proof of Theorem \ref{th2.2} (in particular, see formula (\ref{eq2.7})), the fact that $v \in \mathcal V^\dagger(X)$, in the vicinity of $a \in \d_kX^\circ$, can be translated as the property of linear independence for the gradients
\begin{eqnarray}\label{eq3.14}   
\nabla z,\; \nabla\frac{\d z}{\d u},\; \dots ,\; \nabla\frac{\d^{k - 2}z}{\d u^{k - 2}} 
\end{eqnarray}
in the coordinates $(u, z)$ at the origin $(0,0)$\footnote{In the coordinates $(u, y)$, this is an analogue of (\ref{eq3.10}).}. Note that these are the gradients of  functions in (\ref{eq3.13}) that determine the stratum $\d_kX$.

We claim that this linear independence is equivalent to the linear independence of the gradients
\begin{eqnarray}\label{eq3.15} 
\nabla P,\, \nabla\frac{\d P}{\d u},\, \dots ,\, \nabla\frac{\d^{k - 2}P}{\d u^{k - 2}}.
\end{eqnarray}

In order to validate this claim, we introduce the $u$-parameter curve $\a(u):= (1, u, \dots , u^{k-2})$. Note that the gradients in (\ref{eq3.15})  admit the following representations:
\[
\begin{array}{c}
\nabla P = \big(\frac{\d  P}{\d u},\, D\Phi \cdot \a\big),  \; \;
\nabla \frac{\d P}{\d u} = \big(\frac{\d^2  P}{\d u^2},\, D\Phi \cdot \frac{\d\a}{\d u}\big),\;\;  \dots \;\; , \\ 
\\
\nabla \frac{\d^{k-2} P}{\d u^{k-2}} = \big(\frac{\d^{k-1}P}{\d u^{k-1}},\, D\Phi \cdot \frac{\d^{k-2}\a}{\d u^{k-2}}\big)
\end{array}
\]
, where $\big(\frac{\d^{j}P}{\d u^{j}},\, D\Phi \cdot \frac{\d^{j-1}\a}{\d u^{j-1}}\big)$ is the vector whose first component is $\frac{\d^{j}P}{\d u^{j}}$, and $D\Phi \cdot \frac{\d^{j-1}\a}{\d u^{j-1}}$ denotes the multiplication of the matrix $D\Phi$ by the vector $\frac{\d^{j-1}\a}{\d u^{j-1}}$. 

Since $\frac{\d^j  P}{\d u^j}(0, 0) = 0$ for all $1 \leq j \leq k-1$, we conclude that  the dimension of the space spanned by vectors  $$\nabla P(0,0), \nabla\frac{\d P}{\d u}(0,0), \dots , \nabla\frac{\d^{k - 2}P}{\d u^{k - 2}}(0,0)$$  equals  the dimension of the space spanned by vectors 
 $$D\Phi \cdot \a,\;  D\Phi \cdot \frac{\d\a}{\d u},\; \dots \; ,\;  D\Phi \cdot \frac{\d^{k-2}\a}{\d u^{k-2}},$$ 
being evaluated at $(0, 0)$. Since $\a$ and its $u$-derivatives are independent vectors, the dimension of the latter space is equal to the rank of $D\Phi(0)$. Now we employ Lemma \ref{lem3.3} below to conclude that the dimension of the space spanned by vectors in formula (\ref{eq3.14}) is the same as the dimension of the space spanned by vectors in  (\ref{eq3.15}), both dimensions being equal to $\textup{rk}(D\Phi(0))$. 
 
If $v \in \mathcal V^\dagger(X)$,  the vectors in formula (\ref{eq3.15}) must be independent at the origin $0$. Therefore,  $\textup{rk}(D\Phi(0)) = k -1$, which implies the existence of the Morin coordinates in the vicinity of $a\in \d_kX^\circ$.  This proves the claim in the second bullet of the lemma.
\smallskip

In view of formula (\ref{eq3.13}),  there exists $\e > 0$, so that the sum of  zero multiplicities of the smooth $u$-function $z(u, c)$ in the interval   $-\e < u < \e$,  is the same as the sum of  zero multiplicities  of the function $P(u, c)$ in the same interval; the later function is a polynomial  of degree $k$ in $u$. Therefore, in the vicinity of $a$, the sum of zero multiplicities of $z|_{\g}$ does not exceed $k$. In other words, if  $a \in M_k^\circ$, then each trajectory $\g$, in the vicinity of $a$, hits only some strata $\{M_j^\circ\}$ so that $\sum j \leq k$. 

Consider the polynomial family $P(u, x)$ in the formula (\ref{eq3.7}). There is $\delta > 0$ such that $\|c\| < \delta$ implies that  $|x_j(c)| < (\e/\rho)^j$, where the universal constant $\rho$ is introduced in  the proof of Lemma \ref{lem3.2} below. By that lemma,  for such $\delta > 0$  all the real roots $u$ of $P(u, x)$ are confined to the interval $(-\e, \e)$. Therefore, each trajectory $\g = \{x = c\}$, passing through the section $S := \{u = 0,\, \|c\| < \delta \}$, has \emph{all} the roots of $P(u, c)$ concentrated in the interval $ (-\e, \e)$. 

Let $D_\g$ be the zero divisor of $P(u, c)$.  Thus, not only $\deg(D_\g) \leq k$, but also $\deg(D_\g) \equiv k \; (2)$. The third bullet of Lemma \ref{lem3.1} has been validated.
\end{proof}

\begin{lemma}\label{lem3.2} Put $x := (x_0, \dots , x_{k-1})$. Let $P(u, x) = u^k + \sum_{j =0}^{k-1} x_j u^j$ be a $x$-parameter family of real monic $u$-polynomials. Then, there exists a universal constant $\rho := \rho(k) > 0$ with the following property:  for any $\e > 0$ and each polynomial $P(u, x)$ with the coefficients $\{|x_j| < (\e/\rho)^j\}$, all the real roots $u$ of $P(u, x)$ are located in the interval $(-\e, \e)$.
\end{lemma}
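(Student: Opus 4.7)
The plan is a contrapositive argument built on the classical Cauchy root bound for monic polynomials. Assume $u_0 \in \R$ is a real root of $P(\cdot, x)$ with $|u_0| \geq \e$; I aim to derive a contradiction once $\rho = \rho(k)$ is chosen large enough, depending only on $k$. Writing $u_0^k = -\sum_{j=0}^{k-1} x_j u_0^j$ and applying the triangle inequality yields
\[
|u_0|^k \;\leq\; \sum_{j=0}^{k-1} |x_j|\, |u_0|^j.
\]
Substituting in the coefficient hypothesis $|x_j| < (\e/\rho)^j$ produces the strict bound $|u_0|^k < \sum_{j=0}^{k-1}(\e|u_0|/\rho)^j$, which I then manipulate (using $|u_0| \geq \e$) to isolate a purely numerical inequality in $\rho$ whose failure will deliver the contradiction.

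To make the bookkeeping transparent, I rescale via $v := u_0/\e$, which converts the assumption $|u_0| \geq \e$ into $|v| \geq 1$ and converts $P(u, x) = 0$ into the normalized equation
\[
\tilde P(v) := v^k + \sum_{j=0}^{k-1} \tilde x_j\, v^j = 0, \qquad \tilde x_j := x_j\,\e^{\,j-k}.
\]
The lemma is equivalent to the assertion that, under the induced bound on the rescaled coefficients, $\tilde P$ has no real root of modulus at least $1$. Applying a classical root-localization estimate---Cauchy's $|v| \leq 1 + \max_j |\tilde x_j|$, or a sharper Lagrange-type variant---then reduces the task to verifying that the quantities $|\tilde x_j|$ are uniformly small in a way controlled by $\rho$ alone.

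The main obstacle---really the only obstacle---is securing a universal $\rho(k)$ that is independent of $\e$ and of the particular admissible coefficient vector $x$. The rescaling above consolidates the $\e$-dependence into the $\tilde x_j$, so the coefficient hypothesis collapses into a finite collection of numerical bounds on $|\tilde x_j|$ in terms of $\rho$; the remaining step is the standard geometric-series observation that a finite sum of negative powers of $\rho$ is driven below any prescribed threshold once $\rho$ is taken large enough, and this requirement fixes $\rho(k)$. The resulting contradiction with $|v| \geq 1$ yields $|u_0| < \e$, establishing the lemma.
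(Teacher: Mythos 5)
Your overall strategy (contrapositive, triangle inequality, rescaling by $\e$, then a geometric series in $1/\rho$) is a legitimately different and more elementary route than the paper's, which goes through the Vi\`ete map and a homogeneity--compactness argument on the unit polydisk of roots. But the proposal has a genuine gap exactly at the step you call the ``only obstacle''. After setting $v = u_0/\e$ and $\tilde x_j = x_j\,\e^{\,j-k}$, the hypothesis $|x_j| < (\e/\rho)^j$ gives $|\tilde x_j| < \e^{\,2j-k}\rho^{-j}$, which is \emph{not} a bound ``in terms of $\rho$ alone'': for $j < k/2$ the factor $\e^{\,2j-k}$ blows up as $\e \to 0$, so the promised purely numerical inequality in $\rho$ never materializes and the geometric-series step cannot close. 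No manipulation will repair this, because with the exponent $j$ the statement is literally false: already for $k=1$, $P = u + x_0$ with $|x_0| < (\e/\rho)^0 = 1$, take $x_0 = 1/2$ and $\e = 10^{-3}$; the root $-1/2$ lies far outside $(-\e,\e)$. The exponent in the lemma is an index slip: $x_j$ is, up to sign, the elementary symmetric function of degree $k-j$ of the roots, and the paper's own proof in fact works with the weight-matched bounds $|\sigma_i| < \b^i$ on the degree-$i$ symmetric functions, i.e.\ with $|x_j| < (\e/\rho)^{k-j}$. Your argument, as written, silently assumes the $\e$-dependence cancels in the rescaling, which is precisely what fails under the exponents you used.

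With the weight-matched hypothesis your approach does work, and cleanly: then $|\tilde x_j| < \rho^{-(k-j)}$, independent of $\e$, and if $|v| \ge 1$ one gets $|v|^k \le \sum_{j=0}^{k-1}|\tilde x_j|\,|v|^j \le |v|^{k-1}\sum_{i=1}^{k}\rho^{-i} < |v|^{k-1} \le |v|^k$ as soon as $\rho \ge 2$, a contradiction. (Note that Cauchy's bound $|v| \le 1 + \max_j|\tilde x_j|$ alone is not quite enough, since it only yields $|v| \le 1 + 1/\rho > 1$; the displayed estimate is the one that finishes.) So, once the exponents are corrected, your elementary argument is sound and even produces an explicit universal constant (e.g.\ $\rho = 2$, independent of $k$), whereas the paper obtains $\rho(k)$ nonconstructively as a maximum of ratios over the boundary of the unit polydisk; the price is that you must first identify and fix the mismatch between the stated exponents and the weights of the coefficients, which your write-up does not do.
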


\begin{proof} Consider the complex Vi\`{e}te map $V: \C^k \to \C^k$ defined by elementary symmetric polynomials $\sigma_1, \dots , \sigma_k$ in the variables-roots $\a_1, \dots \a_k$. 

For each $\b > 0$, consider the set $K(\b) :=\{|\sigma_j| < \b^j\}_{1 \leq j \leq k}$ in the complex space $\C^k$ with the coordinates $\a_1, \dots \a_k$.  Because each polynomial $\sigma_j$ is homogeneous of degree $j$, every real ray from the origin intersects with $K(\b) \subset \C^k$ along a segment. Indeed, any ray that does not belong the coordinate hyperplanes, hits the hypersurface $\{\sigma_k = \b^k\}$ at a singleton, any ray which belongings to the coordinate hyperplanes hits the hypersurfaces $\{\sigma_{k -1} = \b^{k - 1}\}$ at a singleton, and so on ...  Note that $K(\b')$ can be obtained from $K(\b)$ by a conformal scaling with the scaling factor $\b'/\b$. 
 
Denote by $B$ the unit polydisk $\{|\a_j| \leq 1\}_j$ in the complex space $\C^k$ with the coordinates $\a_1, \dots \a_k$.  For each point $\a \in \d B$, consider the real ray $r(\a)$ through $\a$ emanating from the origin and the proportion $\rho(\a)$ between the lengths of segments $[0,\, r(\a) \cap \d K(1)]$ and $[0,\, r(\a) \cap \d B]$. 

Put $\rho  := \max_{\a \in \d B} \; \rho(\a)$. Now, if $\a \in \d K(\b)$, then each $|\a_j| \leq \b/\rho$. 

For a given $\e > 0$, take  the coefficients $x_j = \sigma_j$ so that  $|x_j| < (\e/\rho)^j$.  For such choice of coefficients, all the roots $\{\a_j\}$ of the polynomial $$P(u, x) := u^k + \sum_{j =0}^{k-1} x_j u^j$$ are confined to the interval $(-\e, \e)$.
\end{proof}

Lemma \ref{lem3.1}, as well as well as some future arguments, relies on the technical lemma below. 

\begin{lemma}\label{lem3.3} Let $\{\a_i\}$ be a finite set of distinct real numbers. Let $z(u, y)$ be a smooth function, where $u \in \R$ and $y = (y_1, \dots y_n) \in \R^n$. Assume that $z(u, y)$ is a product of two smooth functions, $$P(u, y) = \prod_i \; \Big[(u -\a_i)^{k_i} + \sum_{l =0}^{k_i-2} \phi_{il}(y) (u -\a_i)^l\Big]$$ and $Q(u, y)$, where  $Q(\a_i, 0) \neq 0$ and  $\phi_{il}(0) = 0$. Put $m := \sum_i (k_i -1) \leq n$.  

For each $i$, consider the $(k_i -1) \times n$-matrix $$D\Phi_i := \Big(\frac{\d \phi_{il}}{\d y_m}(0)\Big).$$ Let $D\Phi$ be the $m \times n$-matrix obtained by stocking all the matrices $\{D\Phi_i\}_i$ one on top of the other. 

Consider the matrix $$M_i(z) := \Big(\frac{\d^{l+1}z}{\d u^l \d y_m}(\a_i,0)\Big)$$, where $0 \leq l \leq k_i- 2$ and $1 \leq m \leq n$. Denote by $M(z)$ the matrix formed by stocking all the matrices $\{M_i(z)\}_i$ one on top of the other.
\smallskip

Then the rank of the matrix $D\Phi$ equals to the rank of the matrix $M(z)$. 
As a result, the restriction $\textup{rk}(D\Phi) \leq s$, where $s \leq n$, is given by algebraic constraints imposed on the coefficients of the degree $k_i$ Taylor polynomials  of the function $z(u, y)$ at $(\a_i, 0)$. In other words, the constraint  $\textup{rk}(D\Phi) \leq s$ defines a closed affine subvariety in the space $$\prod_i \textup{Jet}^{k_i}\big((\R \times \R^n, \a_i \times 0); (\R, 0)\big)$$, the product  of $k_i$-jet spaces. 
\end{lemma}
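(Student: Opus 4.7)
The plan is to reduce the computation of the matrix $M(z)$ to the matrix $D\Phi$ via an explicit invertible linear transformation, working locally at each root $\alpha_i$ and exploiting the factorization of $z$ corresponding to the $i$-th factor of $P$. Specifically, for each $i$ I would write $z = P_i \cdot R_i$, where
$$P_i(u,y) := (u-\alpha_i)^{k_i} + \sum_{l=0}^{k_i-2}\phi_{il}(y)(u-\alpha_i)^l, \qquad R_i(u,y) := Q(u,y)\cdot \prod_{j \neq i} P_j(u,y).$$
Because the $\alpha_j$'s are distinct and $\phi_{jl}(0)=0$, one has $P_i(u,0) = (u-\alpha_i)^{k_i}$ and $R_i(\alpha_i,0) = Q(\alpha_i,0)\prod_{j\neq i}(\alpha_i-\alpha_j)^{k_j} \neq 0$.

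Next I would compute $\partial z/\partial y_m$ at $y=0$ via the Leibniz rule:
$$\frac{\partial z}{\partial y_m}(u,0) \;=\; R_i(u,0)\cdot \sum_{l=0}^{k_i-2} \frac{\partial \phi_{il}}{\partial y_m}(0)\,(u-\alpha_i)^l \;+\; (u-\alpha_i)^{k_i}\cdot \frac{\partial R_i}{\partial y_m}(u,0).$$
The second summand vanishes to order $k_i$ at $u=\alpha_i$ and therefore contributes nothing to $\partial^{l+1}z/(\partial u^l \partial y_m)(\alpha_i,0)$ for $0 \leq l \leq k_i-2$. Writing the Taylor expansion $R_i(\alpha_i+t,0) = \sum_{k\geq 0} r_k\, t^k$ with $r_0 \neq 0$ and applying Cauchy multiplication, the coefficient of $t^l$ in the first summand equals $\sum_{p=0}^{l} r_{l-p}\cdot (\partial \phi_{ip}/\partial y_m)(0)$, so
$$\frac{1}{l!}\,\frac{\partial^{l+1}z}{\partial u^l \partial y_m}(\alpha_i,0) \;=\; \sum_{p=0}^{l} r_{l-p}\cdot \frac{\partial \phi_{ip}}{\partial y_m}(0), \qquad 0 \leq l \leq k_i-2.$$
Read as a matrix identity in the row index $l$ and column index $m$, this yields $M_i(z) = A_i \cdot D\Phi_i$, where $A_i$ is the product of the invertible diagonal matrix $\textup{diag}(0!,1!,\ldots,(k_i-2)!)$ with the lower-triangular Toeplitz matrix $\bigl(r_{l-p}\bigr)_{0 \leq p \leq l \leq k_i-2}$ having constant diagonal entry $r_0 \neq 0$. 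Hence $A_i$ is invertible.

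Stacking these identities over $i$ produces a block-diagonal invertible matrix $A$ with $M(z) = A \cdot D\Phi$, so $\textup{rk}(M(z)) = \textup{rk}(D\Phi)$, which is the main equality. For the algebraicity assertion, every entry of $M(z)$ is a partial derivative of $z$ at $(\alpha_i,0)$ of total order at most $k_i-1$, hence a linear functional on the $k_i$-jet of $z$ at $(\alpha_i,0)$; the constraint $\textup{rk}(D\Phi) = \textup{rk}(M(z)) \leq s$ is therefore cut out by the simultaneous vanishing of all $(s+1)\times(s+1)$ minors of $M(z)$, a system of polynomial equations in the jet coefficients, defining a closed affine subvariety of $\prod_i \textup{Jet}^{k_i}\bigl((\R\times\R^n,\alpha_i\times 0);(\R,0)\bigr)$. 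The only delicate step is verifying the lower-triangularity together with the non-vanishing of the diagonal of $A_i$, which is precisely where the hypotheses $Q(\alpha_i,0)\neq 0$ and $\phi_{il}(0)=0$ enter; beyond careful index bookkeeping I do not anticipate any serious obstacle.
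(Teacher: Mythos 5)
Your proof is correct and takes a genuinely different route from the paper's. The paper proceeds by induction on the degree $k$ of a \emph{single} factor $P_i$, peeling off the constant coefficient $\phi_{k+1,0}$ at each step and tracking how the row span of $M(z_{k+1})$ relates to that of $M(z_k)$ via the recursion $z_{k+1} = u\cdot z_k + \phi\cdot Q$ (and the paper explicitly declines to write out the multi-factor bookkeeping, ``to save the pain of complex multi-indexing''). You instead exhibit, for each $i$ at once, an explicit invertible linear relation $M_i(z) = A_i\, D\Phi_i$ with $A_i = \operatorname{diag}(0!,\dots,(k_i-2)!)\cdot\bigl(r_{l-p}\bigr)_{0\le p\le l\le k_i-2}$, obtained by splitting $z = P_i\cdot R_i$, applying Leibniz, discarding the $P_i\,\partial_{y_m}R_i$ term (which vanishes to order $k_i$ at $\alpha_i$), and reading off the Cauchy product with the Taylor series of $R_i$. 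Stacking the per-$i$ relations into a block-diagonal invertible $A$ gives $M(z) = A\cdot D\Phi$ directly, with the multi-factor case handled in the same stroke as the single-factor one. Your route is cleaner and more uniform: it replaces the induction with a closed-form change of basis and makes transparent exactly where $Q(\alpha_i,0)\ne 0$ (and more generally $R_i(\alpha_i,0)\ne 0$) and $\phi_{il}(0)=0$ are used (nonvanishing diagonal of the Toeplitz factor; $P_i(u,0)=(u-\alpha_i)^{k_i}$, respectively). The paper's inductive argument, by contrast, is organized around a recursion that is perhaps more robust for generalization but obscures the simple matrix identity you have isolated. The algebraicity conclusion is handled the same way in spirit: the entries of $M(z)$ are linear in the $k_i$-jet coordinates, so the rank bound is cut out by vanishing of minors.
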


\begin{proof} To save the pain of complex multi-indexing,  we will prove one special case of the lemma, when the polynomial $P(u, y)$ consists of a single multiplier $$(u -\a_1)^{k_1} + \sum_{l =0}^{k_1-2} \phi_{1l}(y) (u -\a_1)^l.$$ 
  
By shifting all the relevant functions by $\a_1$, we can replace the variable $u - \a_1$ with the variable $u$. Also put $k := k_1$.

The argument is an induction $k \Rightarrow k+1$ by the degree $k$ of the polynomial $$P(u, y) = u^k + \sum_{l =0}^{k-2} \phi_{k,l}(y) u^l$$,  with $k = 2$ being the base of induction\footnote{Note that here the coefficient $\phi_{k,l}$ is not the coefficient $\phi_{kl}$, present in the formulation of Lemma \ref{lem3.3}!}.  

Consider the polynomial $$P_{k +1}(u, y) := u^{k +1} + \sum_{l =0}^{k-1} \phi_{k+1, l}(y) u^l$$ with some smooth functional coefficients, where $\phi_{k+1, l}(0) = 0$.  

Let $z_{k + 1} := P_{k+1}\cdot Q_{k+1}$ with $Q_{k+1}(0, 0) \neq 0$.
Then $$z_{k + 1} = \Big[u\big(u^k + \sum_{l = 1}^{k - 1} \phi_{k+1, l}(y) u^{l-1}\big) + \phi_{k+1, 0}(y)\Big] Q_{k+1}(u,y)$$ 
Define  $$P_k := u^k + \sum_{l = 1}^{k - 1} \phi_{k+1, l}(y) u^{l-1}, \;\; \phi := \phi_{k+1, 0}, \;\; Q := Q_{k+1},$$ and $z_k := P_k \cdot  Q$. 

In the new notations, $$z_{k+1} = u\cdot z_k + \phi \cdot Q.$$

Then $$\frac{\d z_{k+1}}{\d y}(u, 0) = u \cdot \frac{\d z_k}{\d y}(u, 0)+ \frac{\d \phi}{\d y}(0) \cdot Q(u,0), $$
where $\frac{\d}{\d y} := (\frac{\d}{\d y_1}, \dots , \frac{\d}{\d y_n})$. Applying the operators $\frac{\d^j}{\d u^j}$ to the previous identity, for $j \leq k$, we get 
$$\frac{\d^{j+1}z_{k+1}}{\d u^j \d y}(u,0) = j \frac{\d^jz_k}{\d u^{j-1} \d y}(u,0) + u \frac{\d^{j+1}z_k}{\d u^j \d y}(u,0) + \frac{\d\phi}{\d y}(0)  \cdot \frac{\d^j Q}{(\d u)^j}(0,0).$$
The substitution $u = 0$ leads to the equations 
\begin{eqnarray}\label{eq3.16}
\frac{\d z_{k+1}}{\d y}(0, 0) & = & \frac{\d\phi}{\d y}(0)  \cdot Q(0,0),\nonumber  \\ 
\frac{\d^{j+1} z_{k+1}}{\d u^j\d y}(0, 0) & = & j \frac{\d^j z_k}{\d u^{j-1}\d y}(0, 0) +  \frac{\d\phi}{\d y}(0)  \cdot \frac{\d^j Q}{\d u^j}(0,0)
\end{eqnarray}
By inductive assumption, the rank of the $(k-1)\times n$-matrix $M(z_k)$ whose rows are the vectors 
$$\big\{\frac{\d z_k}{\d y}(0,0),\, \frac{\d^2 z_k}{\d u\d y}(0,0),\, \dots\,  ,\, \frac{\d^{k-2} z_k}{\d u^{k-2}\d y}(0, 0)\big\}$$ is equal to the rank of the Jacobi matrix $N(P_k)$ whose rows are the vectors $$\big\{ \frac{\d \phi_{k+1,1}}{\d y}(0),\, \frac{\d \phi_{k+1, 2}}{\d y}(0),\, \dots\, ,\, \frac{\d \phi_{k+1, k-1}}{\d y}(0)\big\}.$$ 

Let $\hat M(z_k)$ be the $(k\times n)$-matrix obtained from $M(z_k)$ by adding the first row of zeros. Consider the column-vector $$q := \big\{Q(0,0),\, \frac{\d Q}{\d u}(0,0),\, \dots , \,\frac{\d^{k-1} Q}{\d u^{k -1}}(0,0)\big\}.$$ Then $$M(z_{k+1}) = \hat M(z_k) + q \times  \frac{\d\phi}{\d y}(0)$$
, where ``$\times$" stands for the matrix multiplication.

Remembering that $Q(0,0) \neq 0$ and that $\phi := \phi_{k+1, 0}$, in view of the equations above (or equations (\ref{eq3.16})), we get that the space spanned by the vectors $$\frac{\d z_{k+1}}{\d y}(0, 0)\; \text{and}  \; \Big\{\frac{\d^{j+1} z_{k+1}}{\d u^j\d y}(0, 0)\Big\}_{1 \leq j \leq k-1}$$ coincides with the space spanned by the vectors $$\frac{\d\phi}{\d y}(0), \frac{\d z_k}{\d y}(0, 0),\; \text{and}  \;\Big\{\frac{\d^{j+1} z_k}{\d u^j\d y}(0, 0)\Big\}_{1 \leq j \leq k-2}.$$ Therefore, $\textup{rk}(M(z_{k+1})) = \textup{rk}(N(P_{k+1}))$. 

We let the reader to verify the validity of the lemma for $k = 2$.
\end{proof}

\begin{figure}[ht]
\centerline{\includegraphics[height=4in,width=4in]{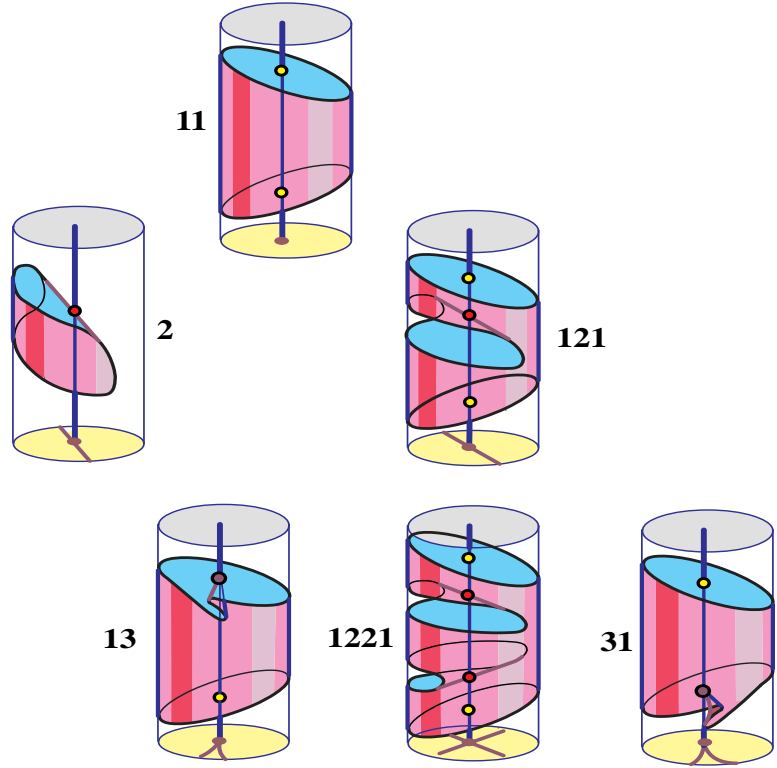}}
\bigskip
\caption{\small{The six geometrical patters of traversally generic fields on $3$-folds}}
\end{figure}

\begin{definition}\label{def3.3} Let $\hat v$ be a nonvanishing vector field in $\hat X$. We say that a set $U \subset \hat X$ is $\hat v$-\emph{adjusted}, if its intersection with every  $\hat v$-trajectory $\hat \g$ is an interval.  \hfill\qed 
\end{definition}

\begin{definition}\label{def3.4} Let $v$ be a traversing and generic vector field in $X$, and $\g$ its trajectory. Consider the intersection $\g \cap \d_1X = \coprod_{1 \leq i \leq p} a_i$, where each point $a_i \in \d_{j_i}X(v)^\circ$. Pick a smooth transversal section $S$ of the $\hat v$-flow at a point $b \in \g$.  

We say that $v$ is \emph{traversally generic at}  $\g$, if  the images of the tangent spaces $\{T_{a_i}(\d_{j_i}X(v)) \}_i$ under the $\hat v$-flow in $T_b(S)$ are in general position. \hfill\qed 
\end{definition}

Fig. 2 shows all the local patterns of traversally generic fields for $3$-folds $X$; the sequences of integers next to the figures label the corresponding tangency patterns of the core trajectory.
\smallskip

The following proposition is a ``semi-global"  generalization of  Morin's Theorem \ref{th3.1}.

\begin{lemma}\label{lem3.4} Let  $\hat v$ be a traversing generic field in a $\hat v$-adjusted neighborhood $U \subset \hat X$ of a given $v$-trajectory $\g \subset X$. Assume that $v$ is traversally generic \emph{at} $\g$. 

Then $\g$ has a $\hat v$-adjusted neighborhood $V \subset U$ with a special system of coordinates 
$$(u,\, \underbrace{x_{10}, \dots  , x_{1j_1-2}},\, \dots \, ,\underbrace{x_{i0}, \dots , x_{ij_i - 2}}, \, \dots \, ,\underbrace{x_{p0}, \dots \, x_{pj_p - 2}},\, \underbrace{y_1, \dots, y_{n - m'(\g)}})$$
such that:
\begin{itemize}
\item $\{u = const\}$ defines a transversal section of the $\hat v$-flow,
\item each $\hat v$-trajectory in $V$ is produced by fixing all the coordinates $\{x_{il}\}$ and $\{y_k\}$,
\item there is $\e > 0$ such that $V \cap \d_1X \subset \coprod_i V_i$, where  $$V_i :=  u^{-1}((\a_i -\e, \a_i + \e)) \cap V$$,  and $\a_i = u(a_i)$,
\item for each $i$, the intersection  $V_i \cap \d_1X$ is given by the equation
\begin{eqnarray}\label{eq3.17}
(u - \a_i)^{j_i} + \sum_{l=0}^{j_i - 2} x_{i, l} (u - \a_i)^l  = 0.
\end{eqnarray}
\end{itemize}
\end{lemma}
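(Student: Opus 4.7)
The plan is to build the required coordinates by combining, at each boundary-intersection point $a_i$, the local Morin canonical form from Lemma \ref{lem3.1} with a globally defined flow coordinate, using traversal genericity at $\g$ to ensure that the local Morin parameters fit together consistently on a single transverse flow section.

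I start by fixing a smooth transverse section $S$ to $\hat v$ at some point $b \in \g$, and defining the global flow coordinate $u$ by integrating $\hat v$ outward from $S$; thus $u$ vanishes on $S$ and $u(a_i) = \a_i$ for each $i$. Next, at each $a_i \in \d_{j_i}X(v)^\circ$, Lemma \ref{lem3.1} supplies local flow-adjusted coordinates $(u, x_{i,0}, \dots, x_{i,j_i-2}, \eta_{i,1}, \dots, \eta_{i,n-j_i+1})$ on a neighborhood $W_i$ of $a_i$ in which $\d_1X$ takes the Morin form \eqref{eq3.7} in $u - \a_i$. The flow-invariant Morin parameter map $\Phi_i := (x_{i,0}, \dots, x_{i,j_i-2}): W_i \to \R^{j_i-1}$ is a submersion at $a_i$, and the description of $\d_{j_i}X(v)$ as the common zero locus of $\{x_{i,0},\dots, x_{i,j_i - 2}, u - \a_i\}$ provided by Theorem \ref{th2.2} (applied to the Morin model) yields the key identification $T_{a_i}(\d_{j_i}X(v)) = \ker d\Phi_i(a_i) \cap \ker du(a_i)$.

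I then push each $\Phi_i$ back along $\hat v$ onto $S$ to obtain smooth maps $\tilde\Phi_i: S \to \R^{j_i-1}$ defined near $b$. Flow-invariance and submersivity are preserved, and $\ker d\tilde\Phi_i(b)$ equals the flow-projection $\mathsf T_i \subset T_b(S)$ of $T_{a_i}(\d_{j_i}X(v))$, as appearing in Definition \ref{def3.4}. The traversal genericity hypothesis at $\g$ then reads precisely as the assertion that the product $\prod_i \tilde\Phi_i: S \to \R^{m'(\g)}$ is a submersion at $b$; since $m'(\g) \leq n = \dim S$, the $\sum_i (j_i - 1)$ pulled-back functions $\{x_{i,l}\}$ have independent differentials at $b$ and can be completed to smooth coordinates $(x_{i,l}, y_1, \dots, y_{n-m'(\g)})$ on a neighborhood of $b$ in $S$.

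Finally, I extend these coordinates along $\hat v$-trajectories to a $\hat v$-adjusted neighborhood $V$ of $\g$ and adjoin $u$; choosing $\e > 0$ smaller than half the minimum of $|\a_{i+1} - \a_i|$, and shrinking $V$ so that each trajectory in $V$ meets $\d_1X$ only inside $V_i := V \cap \{|u - \a_i| < \e\} \subset W_i$, yields the desired structure, with $V_i \cap \d_1X$ defined by the Morin polynomial \eqref{eq3.17} inherited from Lemma \ref{lem3.1}. The main obstacle is reconciling the single global $u$ with the local depressed Morin $u$-coordinate in each $W_i$: these differ by a $y$-dependent shift that kills the $(u-\a_i)^{j_i-1}$ coefficient of the Malgrange factor, exactly the depression move from the proof of Lemma \ref{lem3.1}. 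Traversal genericity guarantees that these per-$i$ shifts are compatible with a single set of section-coordinates, since the depression only adjusts $u$ by a smooth function of the section variables and preserves the form of the remaining coefficients $x_{i,l}$.
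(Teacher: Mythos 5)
Your proof follows essentially the same route as the paper's: Malgrange-factor a global defining function $z$ near each $a_i$ to produce a depressed monic polynomial $P_i$ in $u - \a_i$ with trajectory-constant coefficients $\phi_{i,l}$; transport the coefficient maps $\Phi_i$ to a single flow section; observe that traversal genericity at $\g$ is exactly the statement that $\prod_i \Phi_i$ has maximal rank $m'(\g)$ at the core trajectory; complete those functions to a coordinate system on the section; and extend along the flow. All of this matches the paper, and the bookkeeping with $\e$ and the disjoint $u$-bands $V_i$ is fine.

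The one place you go beyond the paper is where you trip. You correctly flag that each application of Lemma~\ref{lem3.1} near $a_i$ depresses the Malgrange factor via a \emph{different} $u$-shift $u \mapsto u + \tfrac{1}{j_i}\phi_{i,j_i-1}(y)$, so that the several local ``Morin $u$-coordinates'' do not a priori agree with a single global $u$. But the claim that ``traversal genericity guarantees that these per-$i$ shifts are compatible'' is not right, and it does not follow from anything you've set up. Traversal genericity (equivalently, $\textup{rk}\,D\Phi(0)=m'(\g)$) governs the independence of the section-coordinate functions $\phi_{i,l}$; it says nothing about reconciling per-$i$ reparametrizations of the $u$-axis. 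What actually makes the shifts compatible is much more elementary and does not use genericity at all: the shift $\sigma_i(y) := \tfrac{1}{j_i}\phi_{i,j_i-1}(y)$ depends only on the section variables and vanishes at $y=0$; the values $\a_i = u(a_i)$ are distinct, so one can take bump functions $\rho_i(u)$ supported in disjoint bands $|u-\a_i|<\delta$ and set $\tilde u := u + \sum_i \rho_i(u)\,\sigma_i(y)$. For $y$ near $0$ this is a diffeomorphism preserving the trajectories $\{y=\text{const}\}$, it equals the depressed local $u$ near each $a_i$, and the rank conclusion for the resulting $\tilde\phi_{i,l}$'s persists by Lemma~\ref{lem3.3}. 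The paper itself is terse about this gluing, but if you are going to surface the issue (a good instinct), you should resolve it with the bump-function argument rather than invoking genericity, which is a red herring here.
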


\begin{proof} Let $U \subset \hat X$ be a $\hat v$-adjusted neighborhood of a typical trajectory $\g \subset X$.  With the help of the flow, we introduce  a system of coordinates $(u, y) := (u, y_1, \dots y_n)$ in $U$ so that:  
\begin{itemize}
\item $S_i= \{u = \a_i\}$ is a transversal section of $\hat v$-flow, which contains $a_i$ and is diffeomorphic to a closed disk $D^n$, 
\item the locus $\{y = 0\} \cap X$ is the trajectory $\g$, and 
\item each $\hat v$-trajectory in $U$ is given by freezing some point $y$. 
\end{itemize}
Consider a smooth function $z: \hat X: \to \R$, or rather its germ in the vicinity of $X$,  such that:
\begin{itemize}
\item $0$ is a regular value for $z$,
\item $z^{-1}(0) = \d_1X$,
\item $z^{-1}((-\infty, 0]) = X$.
\end{itemize}
\begin{eqnarray}\label{eq3.18}
\end{eqnarray}

In the vicinity of each point $a_i \in \g \cap \d_1X$, the coordinates $(u - \a_i, y) := ( u - \a_i, y_1, \dots y_n)$ are available. As in the proof of Lemma \ref{lem3.1},  in some $\hat v$-adjusted neighborhood $W_i \subset U$ of each point $a_i$, the globally-defined  function $z$ as in (\ref{eq3.18}) can be written as the product of two smooth functions $P_i(u -\a_i, y)\cdot Q_i(u -\a_i, y)$, where $Q_i(u -\a_i, y) \neq 0$, and the polynomial $$P_i(u -\a_i, y) = (u -\a_i)^{j_i} + \sum_{l = 0}^{j_i - 2} \phi_{i,l}(y) (u - \a_i)^l.$$ Its smooth functional coefficients $\{\phi_{i,l}(y)\}_l$ are such that $\phi_{i,l}(0) = 0$. 

We can adjust the size of $W_i$'s so that, for each pair $W_i$ and $W_k$, the set of trajectories passing through  $W_i$ and the  set of trajectories passing through  $W_k$ coincide. In other words, we can find a $\hat v$-adjusted cylindrical neighborhood $W \subset U$ of $\g$, such that $W \cap W_i = W_i$ for all $i$.

Let $\Phi_i: \R^n \to \R^{j_i - 1}$ be given by the functions $\{\phi_{i,l}(y)\}_l$. So we can assume that, in such $\hat v$-adjusted neighborhood $W$  of $\g$, the locus $\{z = 0\}$ is given by one of the polynomial equations $\{P_i(u -\a_i, y) = 0\}_i$.

As in the proof of Lemma \ref{lem3.1} and by Lemma \ref{lem3.3},  $v \in \mathcal V^\dagger(X)$ implies that $\textup{rk}(D\Phi_i(0)) = j_i - 1$.  

Since the $\hat v$-flow is assumed to be traversally generic at $\g$, the flow-generated images $\{\mathsf T_i\}_i$ of all tangent spaces  $\{T_{a_i}(\d_{j_i}X(v))\}_i$ of the minimal strata $\{\d_{j_i}X(v) \ni a_i\}_i$ must be in general position in the tangent space $\mathsf T$ of the section $S$ at the point  $a = \g \cap S$ ($a$ resides below the lowest point $a_1 \in \g \cap \d_1X$).  The space $\mathsf T_i$  is of  the codimension $j_i -1$ in $\mathsf T$.  
 
Let $\Phi:  \R^n \to \R^{m'(\g)}$ be the direct product of the maps $\Phi_i$, where $m'(\g) := \sum_i\, (j_i - 1)$. Then, by the definition of true genericity at $\g$,  $\textup{rk}(D\Phi(0)) = m'(\g) \leq n$ (see Remark 3.1). 
\smallskip

We claim that if $\textup{rk}(D\Phi(0)) = m'(\g)$, then the desired system of coordinates in a new $\hat v$-adjusted neighborhood $V \subset W$ of $\g$ is available: indeed, just put $x_{il} = \phi_{i,l}(y)$ and keep certain $(n - m'(\g))$-tuple of the original coordinates $\{y_k\}$ unchanged.  These coordinates $\{y_k\}$ are chosen so that, together with $\Phi$, they define a map $\R^n \to \R^{m'(\g)} \times \R^{n- m'(\g)}$ of the maximal rank $n$ at the  origin. 
\end{proof} 

\noindent{\bf Remark 3.3.} Lemma \ref{lem3.4} implies that, in such special coordinates,  the intersection $V \cap \d_1X$ is given  by the single polynomial equation
\begin{eqnarray}\label{eq3.19}
P(u, x):= \prod_i \;\big[(u - \a_i)^{j_i} + \sum_{l=0}^{j_i - 2} x_{i, l} (u - \a_i)^l\big]  = 0
\end{eqnarray}
of degree $m(\g) = \sum_i\, j_i$, and $V \cap X$ --- by the inequality $P(u, x) \leq 0$. \hfill\qed
\smallskip

\begin{lemma}\label{lem3.5} Let the smooth manifold $X \subset \R \times \R^d$ be given by  the polynomial inequality $\{P(u, x) := u^d + \sum_{k =0}^{d-1} x_k u^k \leq 0\}$. 

Then the field $\d_u$ is traversally generic with respect to the boundary $\d X = \{P(u, x) = 0\}$.
\end{lemma}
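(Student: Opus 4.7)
The plan is to reduce the traversal-genericity condition to a classical Hermite interpolation statement for univariate polynomials of degree less than $d$.

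\textbf{Preliminary observations.} Since $\partial_{x_0} P \equiv 1$, the gradient $\nabla P$ never vanishes, so $X$ is a smooth manifold with boundary $\partial X = \{P = 0\}$. The flow $\partial_u$ preserves the $x$-coordinates, so every trajectory is $\gamma_c = \{x = c\}$ and the section $\{u = \mathrm{const}\}$ identifies $\mathsf{T}_\star$ with $\R^d$. For boundary genericity, I would repeat the argument of Corollary \ref{cor2.1}: for each $j \leq d$, the gradients $\nabla P, \nabla P^{(1)}, \dots, \nabla P^{(j-1)}$ form a matrix whose submatrix on columns $(\partial_{x_0}, \dots, \partial_{x_{j-1}})$ is lower triangular with diagonal $(0!, 1!, \dots, (j-1)!)$, because $\partial_{x_k} P^{(l)} = \tfrac{k!}{(k-l)!} u^{k-l}$ for $k \geq l$ and $0$ otherwise. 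Hence each $\partial_j X$ is a submanifold and the transversality conditions of Definition \ref{def2.1} hold.

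\textbf{Computation of $\mathsf{T}_i(\gamma)$.} Fix $\gamma = \gamma_c$ and let $\gamma \cap \partial X = \{a_i = (\alpha_i, c)\}_{i=1}^p$, where $\alpha_1 < \dots < \alpha_p$ are the distinct real roots of $P(\cdot,c)$ with multiplicities $j_1, \dots, j_p$. Near $a_i$ the stratum $\partial_{j_i} X^\circ$ is cut out by $\{P^{(l)} = 0 : 0 \leq l \leq j_i - 1\}$. Since $\partial_u P^{(l)}(a_i) = P^{(l+1)}(\alpha_i,c) = 0$ for $l < j_i - 1$ while $P^{(j_i)}(a_i) \neq 0$, the first $j_i - 1$ equations impose conditions only on the $x$-component of a tangent vector, and the last equation then determines the $u$-component. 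Projecting along $\partial_u$ onto the $x$-space therefore gives
\[
\mathsf{T}_i(\gamma) \;=\; \ker L_i, \qquad L_i: \R^d \to \R^{j_i - 1}, \qquad (L_i)_{l,k} \;=\; \frac{d^l}{du^l}\bigl(u^k\bigr)\Big|_{u = \alpha_i}.
\]

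\textbf{Hermite interpolation.} Identifying $\R^d$ with $\R[u]_{<d}$ via $(x_0, \dots, x_{d-1}) \mapsto p(u) := \sum_k x_k u^k$, the combined quotient map $\mathsf{T}_\star \to \prod_i \mathsf{T}_\star/\mathsf{T}_i(\gamma)$ becomes the Hermite evaluation
\[
p \;\longmapsto\; \bigl( p^{(l)}(\alpha_i) \bigr)_{1 \leq i \leq p,\; 0 \leq l \leq j_i - 2} \;\in\; \R^{m'(\gamma)}.
\]
Because the $\alpha_i$ are distinct and $m'(\gamma) = \sum_i (j_i - 1) \leq d - p < d$, the classical Hermite interpolation theorem says these $m'(\gamma)$ evaluation functionals form a basis of the dual of $\R[u]_{< m'(\gamma)}$; since $\R[u]_{< m'(\gamma)} \subset \R[u]_{< d}$, they remain linearly independent on the larger space, so the Hermite evaluation is surjective. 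This is exactly the general-position condition in Definition \ref{def3.2}, and $\partial_u$ is traversally generic.

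The only conceptual step is the translation in the second paragraph: recognizing that, because $\partial_u P^{(l)}$ vanishes through order $j_i - 1$ at $a_i$, the projection along the flow strips off the $u$-direction cleanly and turns the tangent configuration into the Hermite evaluation. Once that identification is made, the result is just classical interpolation together with the a priori bound $\sum j_i \leq \deg_u P = d$.
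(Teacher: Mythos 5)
Your proof is correct and follows essentially the same route as the paper: project the tangent spaces $T_{a_i}(\Sigma_{[j_i]})$ along $\partial_u$ to identify $\mathsf{T}_i(\gamma)$ with the kernel of the derivative-evaluation map at $\alpha_i$ on $\R[u]_{<d}$, then argue the combined system has full rank $m'(\gamma)$. The only difference is cosmetic: the paper carries out the final rank count by exhibiting the solution space as the polynomials divisible by $\prod_i(u-\alpha_i)^{j_i-1}$ (dimension $d - m'$), whereas you package the same fact as the classical uniqueness of Hermite interpolation; both are the same polynomial-algebra statement.
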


\begin{proof} Let $\pi: \R \times \R^d \to \R^d$ be the obvious projection.

For every point $w := (u, x) \in \R\times\R^d$,  there exists a unique integer $j(w) \geq 0$ such that $$P^{(k)}(w) = 0\;\; \text{for all}\;\;  0 \leq k < j(w), \;\; \text{and}\;\; P^{(j(w))}(w) \neq 0$$, where $P^{(j)}$ denotes the $j$-th partial derivative of $P$ with respect to $u$. Evidently, $j(w) \leq d$. 

We denote by $\Sigma_{[j]}\subset \R\times\R^d$  the locus $\{w |\; j(w) \geq j\}$. It is given by the $j$ equations 
\begin{eqnarray}\label{eq3.20} 
\{P(u, x)=0,\, P^{(1)}(u, x) = 0,\, \dots ,\, P^{(j-1)}(u, x) = 0\}
\end{eqnarray}
Consider any $t$-parameter curve  $\g(t) :=  (u(t), x(t)) \subset \Sigma_{[j]}$ and let us compute the restrictions imposed on its tangent vector $\dot\g(0)$ at the point $\g(0)$.  Differentiating the equations (\ref{eq3.20}) that define $\Sigma_{[j]}$ with respect to $t$, we get :
\begin{eqnarray}\label{eq3.21} 
P^{(1)} \dot u  +  P_x\cdot \dot x = 0, \nonumber \\
P^{(2)} \dot u  +  P_x^{(1)}\cdot \dot x  = 0, \nonumber \\
\dots \nonumber \\
P^{(j)} \dot u  +  P_x^{(j-1)}\cdot \dot x = 0
\end{eqnarray}
, where $P_x$ denotes the row vector $(u^{d-1}, \dots , u, 1)$ and $\dot x$  the column vector $(\dot x_{d-1}, \dots , \dot x_1, \dot x_0)$. By combining (\ref{eq3.21})  with the equations (\ref{eq3.20}) of  $\Sigma_{[j]}$, we get 
\begin{eqnarray}\label{eq3.22} 
P_x\cdot \dot x = 0, \nonumber \\
P_x^{(1)}\cdot \dot x = 0, \nonumber \\
 \dots \nonumber \\
P_x^{(j-2)}\cdot \dot x = 0,  \nonumber \\
P^{(j)} \dot u  + P_x^{(j-1)}\cdot \dot x = 0
\end{eqnarray}
These equations imply that for any $\dot x$, subject to the first $j - 1$ equations in (\ref{eq3.22}), there exists a unique value  $\dot u =  -(P_x^{(j-1)}\cdot \dot x)/P^{(j)}$, unless $P^{(j)}(u, x) = 0$. The latter vanishing does not happen in the proper stratum $\Sigma_{[j]}^\circ := \Sigma_{[j]} \setminus \Sigma_{[j+1]}$.

Therefore, for each vector $(\dot u, \dot x) \in T_{(u,x)}\Sigma_{[j]}^\circ$, tangent to $\Sigma_{[j]}^\circ$ at a point $(u, x)$, the projection $D\pi((\dot u, \dot x)) = \dot x$ belongs to an  vector subspace $V_{[j]}(u, x)\subset T_x(\R^d) \approx \R^d$ defined by the first $j - 1$ equations in (\ref{eq3.22}). Moreover, every vector $\dot x$ at $x$ that belongs to $V_{[j]}(u, x)$ has a unique preimage in $T_{(u,x)}\Sigma_{[j]}^\circ$,  so that $D\pi: T_{(u,x)}\Sigma_{[j]}^\circ \to V_{[j]}(u, x)$ is a linear isomorphism.
\smallskip

Let us pick now a generic point $x \in \R^d$. Let  $L_{x}$ denote the line $\pi^{-1}(x) \subset \R\times\R^n$. Put $Z_{x} := L_{x} \cap \{P = 0\}$.  The $u$-coordinate maps $Z_{x}$ to the set of zeros $\{\a_i = \a_i(x)\}_i$ of the $u$-polynomial $P(u, x)$. We denote by $j_i  = j(\a_i, x)$ the multiplicity of the root $\a_i$. 
\smallskip

Our goal is to show that $\pi$ maps the tangent spaces $\{T_{(\a_i, x)}\Sigma_{[j_i]}\}_i$  to a general position configuration in $\R^d$. We have shown that the $D\pi$  maps bijectively each tangent space $T_{(\a_i, x)}\Sigma_{[j_i]}^\circ$ to the vector space $V(\a_i, x) := V_{[j_i]}(\a_i, x)$---the solution set of the homogeneous linear system $L(\a_i, j_i)$:
\begin{eqnarray}\label{eq3.23}
\{P_x(\a_i) \cdot \dot x= 0,\, P_x^{(1)}(\a_i) \cdot \dot x = 0,\, \dots ,\,  P^{(j_i -2)}(\a_i) \cdot \dot x = 0\} 
\end{eqnarray}
in the variables $\dot x := (\dot x_0, \dots , \dot x_{d-1})$. This space $V(\a_i, x)$ is viewed as the subspace of $T_x\R^d$, where $(\a_i, x)$ satisfies (\ref{eq3.20}) (with $u$ being replaced with $\a_i$).

We need to verify that  the system $L(\{\a_i, j_i\}_i)$,
formed by collecting all the systems $L(\a_i, j_i)$ for individual $V(\a_i, x)$'s, is of the maximal possible rank, so that the spaces $\{V(\a_i, x)\}_i$ are in general position. 

Let $m := \sum_i (j_i - 1)$. The matrix of the system $L(\{\a_i, j_i\}_i)$ is a generalized Vandermonde  $(m \times d)$-matrix. Its rank is $m$---the maximal possible.

Let us validate this fact. Consider the auxiliary $u$-polynomial $$T(u) := P_x \cdot \dot x = \sum_{k =0}^{d-1} \dot x_k u^k.$$ By (\ref{eq3.23}), the solutions $\dot x$ of $L(\{\a_i, j_i\}_i)$ correspond exactly to the $u$-polynomials $T(u)$ with coefficients $\dot x_0, \dots  , \dot x_{d-1}$ and the roots $\{\a_i\}$ of multiplicities $\{j_i -1\}$. In other words, $T(u)$ are the $u$-polynomials that are divisible by the real polynomial $$S(u) := \prod_i (u-\a_i)^{j_i -1}$$ of degree $m$.

The equations (\ref{eq3.22}) represent the requirement that $R(u)$, the the remainder of this division, vanishes (a priori, $R(u)$ is  a polynomial of degree $m  -1$ at most). The quotient $T(u)/S(u)$ consists of polynomials of the form $Q(u, y) = \sum_{l=0}^q  y_l\, u^l$, where $q = d -1 - m$ and $\{y_l\}_l$ are some real coefficients. We can view them as free variables,  since any choice of $y_l$'s produces the polynomial $T(u, y) := S(u)\cdot Q(u, y)$, which gives rise to a solution $\dot x(y)$ of the system $L(\{\a_i, j_i\}_i)$. Moreover, different parameters $y$ produce different polynomials $S(u)\cdot Q(u, y)$. Thus $y$ gives a $(d-m)$-parametric representation of the solution space of the system $L(\{\a_i, j_i\}_i)$.

So (\ref{eq3.22})  is solvable, and the rank of its matrix equals $m$. Therefore, the spaces $\{V(\a^\star, x^\star)\}$ form a general position configuration in $\R^d$. As a result, the field $\d_u$ is generic with respect to the boundary $\d X = \{P(u, x) = 0\}$.
\end{proof}

\begin{lemma}\label{lem3.6} Let $P(u, x)$ be the polynomial in (\ref{eq3.19})  of degree $d = \sum_i j_i$. Put $m : = \sum_i (j_i - 1)$. Consider the smooth manifold $X \subset \R \times \R^m$, given by  the polynomial inequality $\{P(u, x)  \leq 0\}$. 

Then, in the vicinity of the line $L_0 := \{x = 0\} \subset \R \times \R^m$, the field $\d_u$ is traversally generic with respect to the boundary $\d X = \{P(u, x) = 0\}$.
\end{lemma}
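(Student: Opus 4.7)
The plan is to reduce the claim to Lemma \ref{lem3.5} applied separately to each factor
$$P_i(u,x_{i,*}) \;=\; (u-\a_i)^{j_i}+\sum_{l=0}^{j_i-2} x_{i,l}(u-\a_i)^l,$$
exploiting the fact that the coordinate blocks $\{x_{i,*}\}_i$ of $\R^m$ are pairwise disjoint. For $L_0$ itself and for every trajectory $\g_c=\{x=c\}$ with $\|c\|$ small, I need to verify boundary genericity and the general-position condition of Definition \ref{def3.2} on the flow images, in a transversal section, of the tangent spaces to the relevant Morse strata.

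The first step is localization near each root. Since $Q_i:=\prod_{k\neq i}P_k$ satisfies $Q_i(\a_i,0)=\prod_{k\neq i}(\a_i-\a_k)^{j_k}\neq 0$, we have $P=P_i\cdot Q_i$ in a neighborhood of $(\a_i,0)$, with $Q_i$ non-vanishing there. Leibniz's rule gives $P^{(l)}=P_i^{(l)}Q_i+(\text{a combination of }P_i^{(l')},\ l'<l)$, and an induction on $l$ shows that the system $\{P^{(l)}=0\}_{l=0}^{k-1}$ is locally equivalent to $\{P_i^{(l)}=0\}_{l=0}^{k-1}$. Hence every Morse stratum $\d_kX^\circ$ in a neighborhood of $(\a_i,0)$ is cut out by equations involving only the coordinates $(u,x_{i,*})$, with every other block $x_{l,*}$ ($l\neq i$) left free.

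Fix a trajectory $\g_c=\{x=c\}$ with $c$ small, and let the real zeros of $P_i(\sim,c_{i,*})$, clustered near $\a_i$, be $\b_{i,1},\dots,\b_{i,p_i}$ with multiplicities $j_{i,1},\dots,j_{i,p_i}$ summing to $j_i$. Each $(\b_{i,k},c)$ lies in $\d_{j_{i,k}}X^\circ$, and by the localization step its tangent space is defined by equations only in $(\dot u,\dot x_{i,*})$. Writing $E_l:=\R^{j_l-1}$ for the $x_{l,*}$-block, so that $\R^m=\bigoplus_l E_l$, the flow image $\mathsf T_{i,k}\subset\R^m$ of this tangent space in the section $\{u=\mathrm{const}\}$ has the form $\mathsf T_{i,k}=W_{i,k}\oplus\bigoplus_{l\neq i}E_l$, where $W_{i,k}\subset E_i$ has codimension $j_{i,k}-1$. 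Consequently the direct-product quotient map $\R^m\to\prod_{i,k}\R^m/\mathsf T_{i,k}$ decomposes as a direct sum over $i$ of per-block maps $E_i\to\prod_k E_i/W_{i,k}$, so surjectivity of the whole reduces to surjectivity of each per-block summand.

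The per-block surjectivity follows from the rank computation of Lemma \ref{lem3.5} applied to $P_i$ alone; this is the step I expect to be the only real obstacle, because $P_i$ is depressed in $v:=u-\a_i$ whereas Lemma \ref{lem3.5} is stated for the fully universal monic polynomial. The needed adaptation is a routine rank recount: restricting the generalized Vandermonde system to the depressed subspace $\{\dot x_{i,j_i-1}=0\}$, the solution space consists of polynomials $T(v)=S(v)Q(v)$ with $S(v)=\prod_k(v-(\b_{i,k}-\a_i))^{j_{i,k}-1}$ of degree $j_i-p_i$ and $\deg Q\le p_i-2$, hence of dimension $p_i-1$; so the restricted system has rank $(j_i-1)-(p_i-1)=\sum_k(j_{i,k}-1)$, equal to the sum of the codimensions of the $W_{i,k}$ in $E_i$. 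This yields per-block general position and, via the same rank count, per-block boundary genericity. Assembling across $i$ completes the proof.
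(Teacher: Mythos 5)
Your proof is correct and follows essentially the same route as the paper's: factor $P$ into the depressed pieces $P_i$, localize near each $\a_i$ to see that the strata equations only involve the block $(u,x_{i,*})$ (the paper achieves this via Lemma \ref{lem3.2} to separate the hypersurfaces $Z_i=\{P_i=0\}$ near $L_0$), and then run the generalized-Vandermonde/divisibility rank count per block, subtracting one from the degree budget to account for the missing coefficient of $(u-\a_i)^{j_i-1}$. Your explicit packaging of the block structure as $\R^m=\bigoplus_i E_i$ and the observation that each $\R^m/\mathsf T_{i,k}$ collapses to $E_i/W_{i,k}$ is a slightly cleaner way to say what the paper says when it assembles the per-block systems $\mathcal L_{ik}(x^\star)$ into $\mathcal L(x^\star)$, but it is the same reduction.
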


\begin{proof} We follow the outline of the arguments from the previous lemma. 

We denote by $Z$ the hypersurface $\{P(u, x) = 0\} \subset \R\times\R^m$.

Let $x_{\{i\}} := (x_{i,0}, x_{i,1},\, \dots ,\, x_{i, j_i -2})$.  By definition, the polynomial $P(u, x)$ of degree $d = \sum_i j_i$ is the product of  the factors $$P_i(u, x_{\{i\}}) := (u - \a_i)^{j_i} + \sum_{l=0}^{j_i - 2} x_{i, l} (u - \a_i)^l.$$

Consider the hypersurfaces  $Z_i := \{P_i(u, x_{\{i\}}) = 0\}$. There exists an $\e > 0$ such that the hypersurfaces $\{Z_i\}_i$ are disjointed in the cylinder  $$\Pi_\e := \big\{\{\|x_{\{i\}}\| < \e\}_i\big\} \subset \R\times\R^d$$ with the polydisk base $\big\{B_\e := \{\|x_{\{i\}}\| < \e\}_i\big\} \subset \R^m$.  In other words, for such choice of $\e > 0$, $$Z \cap \Pi_\e = \coprod_i (Z_i\cap \Pi_\e).$$ Indeed, the claim follows from Lemma \ref{lem3.2}, since all the roots $\{\a_i\}$ of $P(u, 0)$ are distinct (note that the intersection $L_0 \cap Z = \coprod_i (\a_i, 0)$) and the roots of nearby polynomials $P(u, x)$, $x \in B_\e$, are grouped around the $\a_i$'s. 

Therefore, for any $x^\star \in B_\e$, the intersection of the line $$L_{x^\star} := \{x = x^\star\} \subset \R \times \R^m$$ with $Z$ is of the form $\coprod_i \Big(\coprod_{k \in A_i} ((\b_{ik}, x^\star)\big)\Big)$, where $\{\b_{ik}\}_k$ are the real roots  of $P_i(u, x^\star)$. 

Let us denote the multiplicity of the root $\b_{ik}^\star = \b_{ik}(x^\star)$ by $j^\star_{ik}$. 

Since, in the vicinity of the point $(\a_i, 0)$, the equations $\{P(u, x) =0\}$ and $\{P_i(u, x) = 0\}$ determine the same solution sets, we can reduce the study of locus $\Sigma_{[j^\star_{ik}]} \subset Z$ to the study of a similar locus $\Sigma_{[j^\star_{ik}]} \subset Z_i$. As in (\ref{eq3.23}), both loci are given by the equations:
$$\{P_i(u, x)=0,\, P_i^{(1)}(u, x) = 0,\, \dots ,\, P_i^{(j^\star_{ik}-1)}(u, x) = 0\}.$$

Thus we have reduced our settings to the ones studied in Lemma \ref{lem3.5}. Let us adapt equations (\ref{eq3.20})-(\ref{eq3.22}) to our present environment\footnote{Note the difference: in the present setting, the polynomial $\tilde P_i$ is depressed, i.e., the coefficient $x_{i, j_i -1}$ of $(u - \a_i)^{j_i - 1}$ is zero.}. We denote by $\tilde P_i$ the $\a_i$-shifted polynomial, so that $\tilde P_i(u-\a_i, x) := P_i(u, x)$. In particular, the vector $P_x := (u^{d-1}, \dots , u, 1)$ in (\ref{eq3.21}) and (\ref{eq3.22}) must be replaced by the vector $$\xi_i(u) := (\tilde P_i)_{x_{\{i\}}} = ((u -\a_i)^{j_i-2}, \dots , (u -\a_i), 1)$$
We denote by $\{\xi_i^{(l)}(u)\}_l$ its multiple derivatives with respect to the variable $u$. 

As in  (\ref{eq3.22}), we conclude that each vector $(\dot u, \dot x)$, tangent to the locus $\Sigma_{[j^\star_{ik}]}$ at the point $(\b_{ik}, x^\star)$ is determined by its $D\pi$-projection $\dot x \in T_{x^\star}(\R^m) \approx \R^m$. 

Therefore, $D\pi$ maps each tangent space $T_{(\b_{ik}^\star, x^\star)}(\Sigma_{[j^\star_{ik}]}^\circ)$ bijectively to the linear subspace of $T_{x^\star}\R^m$, given by the $j_{ik}^\star - 1$ homogeneous equations
\begin{eqnarray}\label{eq3.24}
V(\b_{ik}^\star) := \big\{\xi_i(\b_{ik}^\star) \cdot \dot x_{\{i\}}=0,\;\; \xi_i^{(1)}(\b_{ik}^\star)\cdot \dot x_{\{i\}} = 0,\; \nonumber \\ \dots \; ,\,  \xi_i^{(j^\star_{ik}-2)}(\b_{ik}^\star)\cdot \dot x_{\{i\}} = 0\big\}.  
\end{eqnarray}
with respect to the $j_i - 1$ variables $\dot x_{\{i\}}$. Note that if some $j_{ik}^\star = 1$, then the simple root $\b_{ik}^\star$ does not contribute to the equations.

To conclude the proof, we need to show that  these spaces $\{V(\b_{ik}^\star)\}_{i,k}$ are in general position in $\R^d$. 

Again, the argument is a modification of a similar proof from Lemma \ref{lem3.5}. Consider the entire collection of spaces $\{V(\b_{ik}^\star)\}_{i, k}$ and the collection $\mathcal L(x^\star) = \{ \mathcal L_{ik}(x^\star)\}_{i, k}$ of equations as in (\ref{eq3.24}) with respect to the $m := \sum_i (j_i -1)$ variables $\{\dot x_{\{i\}}\}_i$. The number of such equations is $m^\star := \sum_{i, k}(j_{ik}^\star - 1)$. 

So we have to show that the rank of the matrix of the system $\mathcal L(x^\star)$ is $m^\star$, the maximal possible. Note that $m$ is the reduced multiplicity of the $\d_u$-trajectory through the point $(0, 0)$, while $m^\star \leq m$ is the reduced multiplicity of the $\d_u$-trajectory through the nearby point $(0, x^\star)$. 

The equations (\ref{eq3.24}) are equivalent to the requirement that, for each $x^\star$, the $u$-polynomial $T_i(u, \dot x) := \xi_i(u) \cdot \dot x_{\{i\}}$ of degree $j_i - 2$ is divisible by the polynomial $$S_i(u) := \prod_k (u - \b_{ik}^\star)^{j^\star_{ik} - 1}$$ of degree $m_i^\star := \sum_k (j^\star_{ik} - 1)$. So the quotient $Q_i := T_i(u, \dot x)  / S_i(u)$ is a polynomial of the form $$Q_i(u, y_{\{i\}}) := \sum_{l = 0}^{j_i - 2 - m_i^\star} y_{il}\, u^l$$
, which depends on $(j_i - 1 - m_i^\star)$ parameters $\{y_{il}\}_l$. 

On the other hand, for any choice of these parameters $\{y_{il}\}_l$, we can form the polynomial $S_i(u)\cdot Q_i(u, y_{\{i\}})$ and to compute its Taylor polynomial $\sum_{l =0}^{j_i -2} a_{il}(u-\a_i)^l$ at the point $\a_i$. Then $\{\dot x_{il} = a_{il}\}_l$ must satisfy $\mathcal L_i(x^\star)$.

Thus any solution of the system $\mathcal L(x^\star)$ is generated via this linear mechanism by choosing the $m - m^\star$ parameters variables $\{y_{\{i\}}\}_i$. Different choices of $\{y_{\{i\}}\}_i$ lead to different ordered sets of $u$-polynomials $\{S_i(u)\cdot Q_i(u, y_{\{i\}})\}_i$ and therefore to different solutions $\{\dot x_{il}\}$ of $\mathcal L(x^\star)$. Therefore, the rank of the matrix of $\mathcal L(x^\star)$ is $m - m^\star$, the maximal possible. In turn, this implies that the $\{V(\b_{ik}^\star)\}_{i,k}$ are in general position in $\R^m$. 
\end{proof}

\begin{definition}\label{def3.5} We say that a field $v \in \mathcal V_{\mathsf{trav}}(X)$ is \emph{versal} if each $v$-trajectory $\g$ has a $\hat v$-adjusted neighborhood $U \subset \hat X$, equipped with special  coordinates 
$$(u,\;\underbrace{x_{10}, \dots  , x_{1j_1-2}},\, \dots \, \underbrace{x_{i0}, \dots , x_{ij_i - 2}}, \, \dots \, \underbrace{x_{p0}, \dots \, x_{pj_p - 2}},\, \underbrace{y_1, \dots, y_{n - m'(\g)}})$$
as in Lemma \ref{lem3.4} in which $\d_1X$ is given by the equation (\ref{eq3.19})\footnote{In particular, for any $\g$, we require that the reduced multiplicity $m'(\g) \leq \dim(X) - 1$.}. 
\smallskip

We denote by $\mathcal V_{\mathsf{vers}}(X) \subset \mathcal V_{\mathsf{trav}}(X)$ the subspace of all versal fields on the manifold $X$. \hfill\qed
\end{definition}

 \begin{corollary}\label{cor3.1} If a field $v$ is versal in a $v$-adjusted neighborhood of its trajectory $\g$, then there is another $v$-adjusted neighborhood $U \subset \hat X$ of $\g$ so that $v$ is traversally generic in $U$ (with respect to $\d_1X \cap U$). 
\end{corollary}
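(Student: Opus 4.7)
My plan is to transport the problem to the polynomial model of Lemma \ref{lem3.6} via the versal chart, and then verify that the extra $y$-coordinates are inert so that traversal genericity in the slice extends to the full neighborhood.

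By Definition \ref{def3.5}, versality at $\g$ supplies a $\hat v$-adjusted neighborhood with coordinates $(u;\, x_{i,l};\, y_k)$ in which $v$ corresponds to $\d_u$, every trajectory is obtained by freezing all the $x$ and $y$ variables, and $\d_1X$ is cut out by the polynomial $P(u,x)$ of (\ref{eq3.19}). Set $m := m'(\g)$. Because $P$ carries no $y$-dependence, the entire Morin stratification is cylindrical over the $y$-factor: in this chart each stratum $\d_jX(v)$ is the product of the stratum $\{P^{(l)} = 0\}_{l < j} \subset \R \times \R^m$ with $\R^{n-m}$.

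Now I would apply Lemma \ref{lem3.6} in the $(u,x)$-slice: $\d_u$ is traversally generic with respect to $\{P = 0\}$ in some neighborhood $W$ of $L_0 \subset \R \times \R^m$. Let $U \subset \hat X$ be the versal-chart preimage of $W \times B$, where $B \subset \R^{n-m}$ is a small ball around the origin. For any $v$-trajectory $\g' \subset U$, the intersection $\g' \cap \d_1X$ depends only on the $x$-parameters of $\g'$; moreover, the tangent space of each stratum at an intersection point splits as a product of its $(u,x)$-factor with $\R^{n-m}$, so its flow-image $\mathsf T_i(\g') \subset \mathsf T_\star \cong \R^m \times \R^{n-m}$ splits as $V_i(\g') \times \R^{n-m}$, where $V_i(\g')$ denotes the corresponding slice image supplied by Lemma \ref{lem3.6}. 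Consequently each quotient $\mathsf T_\star / \mathsf T_i(\g')$ is canonically $\R^m / V_i(\g')$, and the surjectivity of $\mathsf T_\star \to \prod_i \mathsf T_\star / \mathsf T_i(\g')$ required by Definition \ref{def3.2} factors through the $y$-projection $\R^m \times \R^{n-m} \to \R^m$ followed by the combined slice quotient, which is surjective by Lemma \ref{lem3.6}.

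Boundary genericity on $U$ is inherited by the same product-factorization, since Lemma \ref{lem3.6} already delivers it in the slice and the trivial $y$-factor cannot disturb the transversality of $v$ to the zero section of any of the quotient line bundles $T_j^\nu$. The only step deserving care is verifying the $y$-cylindrical factorization of the strata and their tangent spaces; this follows at once from the $y$-independence of $P$ in (\ref{eq3.19}), and I anticipate no further obstacle.
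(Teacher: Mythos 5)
Your proof is correct and follows the paper's own route: read the polynomial model $\{P(u,x)\leq 0\}$ off the versal chart and invoke Lemma~\ref{lem3.6}. You go further than the paper's one-line argument by explicitly verifying that the extra $y$-coordinates factor out cylindrically (so $\mathsf T_i(\g') = V_i(\g') \times \R^{n-m}$ and surjectivity of the quotient map descends to the $(u,x)$-slice), a point the paper leaves implicit but which is needed to bridge Lemma~\ref{lem3.6} --- stated in $\R\times\R^{m'(\g)}$ --- to the full $(n+1)$-dimensional ambient neighborhood.
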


\begin{proof} By the definition of a versal field, in special coordinates with the core $\g$, the boundary $\d_1X$ is given by a polynomial equation $P = 0$ as in (\ref{eq3.19}).  By applying  Lemma \ref{lem3.6} to $P$, we conclude that $v$ is traversally generic in a smaller $v$-adjusted neighborhood $U \subset \hat X$ of $\g$. 
\end{proof}

The next proposition claims that if a field is traversally generic \emph{at} a trajectory $\g$, then it is traversally generic in its vicinity.

\begin{corollary}\label{cor3.2} Let $v$ be a traversing field on $X$ which is traversally generic \emph{at} a trajectory $\g$ (see Definition \ref{def3.4}).  

Then there exists a $\hat v$-adjusted neighborhood $U$ of $\g$, such that the field $v$ is traversally generic in $U$ with respect to $\d_1X \cap U$. 
\end{corollary}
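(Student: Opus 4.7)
The plan is to combine Lemma \ref{lem3.4}, which upgrades ``traversally generic at $\g$'' to a versal normal form in a $\hat v$-adjusted neighborhood of $\g$, with Lemma \ref{lem3.6}, which shows that this normal form is traversally generic on a full neighborhood of the line $\{x = 0\}$ in the model. In short, Corollary \ref{cor3.2} is essentially Corollary \ref{cor3.1} after one observes that traversal genericity \emph{at} $\g$ suffices to trigger the versal coordinate system.

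First I would invoke Lemma \ref{lem3.4} at $\g$: since $v \in \mathcal V^\dagger(X)$ (because $v$ is traversing, boundary generic, and in particular traversally generic at $\g$ implies the rank condition on $D\Phi$), there is a $\hat v$-adjusted neighborhood $V$ of $\g$ and special coordinates $(u, \{x_{il}\}, \{y_k\})$ in which each $\hat v$-trajectory is obtained by freezing all $x_{il}$ and $y_k$, and $V \cap \d_1X$ is the zero set of the single polynomial $P(u, x)$ of Remark 3.3. Then I would apply Lemma \ref{lem3.6} to this $P$: it asserts that $\d_u$ is traversally generic with respect to $\{P = 0\}$ on a neighborhood of $L_0 = \{x = 0\}$ in the $(u, x)$-model. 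Finally, I would shrink $V$ to a $\hat v$-adjusted neighborhood $U \subset V$ of $\g$ whose image in the model is contained in the set of $x$'s for which Lemma \ref{lem3.6} has been established, which is possible because both $V$ and that model neighborhood are open around $\g$ and $L_0$ respectively.

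The one point that needs care is that Lemma \ref{lem3.6} is formulated purely in the $(u, x)$-variables, whereas our chart carries the extra coordinates $\{y_k\}$ that do not appear in $P$. Concretely, each stratum $\d_{j_i}X \cap V$ is a cylinder of the form $\Sigma_{[j_i]} \times \R^{\,n - m'(\g)}$, and its tangent space, when transported by the $\hat v$-flow to the section $\{u = \mathrm{const}\}$, is of the form $V_{[j_i]}(\b_{ik}, x^\star) \times \R^{\,n - m'(\g)}$ in $T_{x^\star}\R^m \times \R^{\,n - m'(\g)}$. Since the quotient $(\R^m \times \R^{\,n-m'(\g)})/(V_{[j_i]} \times \R^{\,n-m'(\g)}) = \R^m/V_{[j_i]}$, a collection $\{V_{[j_i]} \times \R^{\,n-m'(\g)}\}$ is generic in the ambient product in the sense of Definition \ref{def3.2} if and only if $\{V_{[j_i]}\}$ is generic in $\R^m$. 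The latter is exactly what Lemma \ref{lem3.6} supplies; this is the reduction that makes the argument go.

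The main obstacle I expect is not conceptual but bookkeeping: one must verify that the $\hat v$-adjusted neighborhood $U$ can be chosen so that (i) every trajectory through $U$ remains in the domain of the Lemma \ref{lem3.4} chart, and (ii) the corresponding base point $x^\star$ lies in the set on which Lemma \ref{lem3.6} has been proved. Both are guaranteed by continuity, once one chooses $U$ to be a thin enough flow-tube around $\g$. With this reduction in hand, the statement of Corollary \ref{cor3.2} is a direct restatement of Corollary \ref{cor3.1} with its hypothesis ``versal at $\g$'' replaced by the equivalent hypothesis ``traversally generic at $\g$''.
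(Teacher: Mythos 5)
Your proposal is correct and follows essentially the same route as the paper: Lemma \ref{lem3.4} upgrades ``traversally generic at $\g$'' to versality near $\g$, and then Corollary \ref{cor3.1} (which you unpack into its proof via Lemma \ref{lem3.6}) yields traversal genericity in a $\hat v$-adjusted neighborhood of $\g$. The extra care you take with the unused $\{y_k\}$-coordinates and the cylinder structure of the strata is a worthwhile clarification, but it is the same argument the paper compresses into two sentences.
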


\begin{proof} If $v$ is traversally generic at $\g$ (as the hypotheses of the corollary spell out), then by  Lemma \ref{lem3.4}, $v$ is versal in the vicinity of $\g$. By  Corollary \ref{cor3.1}, the field is traversally generic in the vicinity of $\g$.
\end{proof}

\begin{theorem}\label{th3.1} A vector field on $X$ is versal if and only if it is traversally generic; in other words, $\mathcal V_{\mathsf{vers}}(X) =  \mathcal V^\ddagger(X)$. 
\end{theorem}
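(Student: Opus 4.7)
The strategy is to observe that Theorem \ref{th3.1} is essentially a packaging statement: the two implications $\mathcal V_{\mathsf{vers}}(X) \supseteq \mathcal V^\ddagger(X)$ and $\mathcal V_{\mathsf{vers}}(X) \subseteq \mathcal V^\ddagger(X)$ have already been done in Lemma \ref{lem3.4} and Corollary \ref{cor3.1}, respectively. What remains is to verify that the ``at every trajectory" condition from Definition \ref{def3.2} matches the ``in a $\hat v$-adjusted neighborhood" condition from Definition \ref{def3.5}, and to confirm that being versal automatically yields membership in $\mathcal V^\dagger(X)$.

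For the inclusion $\mathcal V^\ddagger(X) \subseteq \mathcal V_{\mathsf{vers}}(X)$, I would argue as follows. Suppose $v \in \mathcal V^\ddagger(X)$. Then $v$ is boundary generic and, for every trajectory $\g$, the configuration $\{\mathsf T_i(\g)\}_i$ is generic in $\mathsf T_\star$. This is exactly the hypothesis of Lemma \ref{lem3.4} (recall that the traversal genericity \emph{at} $\g$ of Definition \ref{def3.4} coincides with the second bullet of Definition \ref{def3.2}, since the flow-generated images of the $T_{a_i}(\d_{j_i}X(v))$ in a transversal section are precisely the subspaces $\mathsf T_i(\g)$). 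Hence Lemma \ref{lem3.4} supplies a $\hat v$-adjusted neighborhood $V$ of $\g$ with the required special coordinates in which $\d_1X$ is cut out by the polynomial of Remark 3.3, i.e.\ by equation (\ref{eq3.19}). Since this holds for every trajectory $\g$, the field $v$ is versal.

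For the reverse inclusion $\mathcal V_{\mathsf{vers}}(X) \subseteq \mathcal V^\ddagger(X)$, I would run it through Corollary \ref{cor3.1}. Let $v \in \mathcal V_{\mathsf{vers}}(X)$ and fix a trajectory $\g$. By Definition \ref{def3.5} there are special coordinates near $\g$ in which $v = \d_u$ and $\d_1X$ is given by (\ref{eq3.19}). Lemma \ref{lem3.6} then shows that, in a possibly smaller $\hat v$-adjusted neighborhood $U$ of $\g$, the field $\d_u$ is traversally generic with respect to $\d_1X\cap U$; in particular it is boundary generic there (the construction produces manifold strata $\d_jX^\circ$ with linearly independent gradients, fulfilling all four bullets of Definition \ref{def2.1}) and the tangent-space images at the intersection points with $\d_1X$ are in general position in a transversal section. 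Since this happens at every trajectory and local boundary genericity at every point of $\d_1X$ is equivalent to the global condition, we conclude that $v \in \mathcal V^\dagger(X)$ and that $\{\mathsf T_i(\g)\}_i$ is generic for every $\g$, i.e.\ $v \in \mathcal V^\ddagger(X)$.

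The only subtlety I anticipate, and the step I would take the most care with, is the matching of the local-versus-semi-local vocabulary: Definition \ref{def3.2} phrases the general-position condition at a single trajectory using a fixed section $\mathsf T_\star$, while Corollary \ref{cor3.1} delivers traversal genericity inside a whole $\hat v$-adjusted tube. I would spell out that the section can be chosen consistently along the tube (so that the resulting configurations $\{\mathsf T_i(\g')\}$ vary continuously with the nearby trajectory $\g'$) and that general position is an open condition, so the two formulations agree. Everything else is a matter of invoking the lemmas in the right order.
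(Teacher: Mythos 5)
Your proof is correct and follows essentially the same route as the paper: Lemma~\ref{lem3.4} for $\mathcal V^\ddagger(X)\subseteq \mathcal V_{\mathsf{vers}}(X)$, and Corollary~\ref{cor3.1} (built on Lemma~\ref{lem3.6}) for the reverse inclusion. One small observation: the paper's own proof cites Corollary~\ref{cor3.2} for the step ``any versal field is traversally generic in the vicinity of each trajectory,'' but Corollary~\ref{cor3.2}'s hypothesis is ``traversally generic \emph{at} $\g$,'' not ``versal''---the statement whose hypothesis is ``versal'' is Corollary~\ref{cor3.1}, which is exactly what you cite; so your citation is in fact the cleaner and logically more direct one. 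Your extra care in reconciling the ``at-a-trajectory'' phrasing of Definition~\ref{def3.2} with the ``in-a-tube'' phrasing of Definition~\ref{def3.5}, and in noting that versality implicitly carries boundary genericity with it, fills in bookkeeping the paper leaves tacit and does no harm.
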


\begin{proof} By Lemma \ref{lem3.4}, any traversally generic field is versal. By Corollary \ref{cor3.2}, any versal field is traversally generic in the vicinity of each trajectory $\g$, and therefore, is traversally generic globally. 
\end{proof}

In this chapter, our ultimate goal is to improve upon the Morin Theorem \ref{th3.1} by showing that, for a compact smooth manifold $X$, the space $\mathcal V_{\mathsf{vers}}(X) = \mathcal V^\ddagger(X)$ of versal/traversally generic vector fields is actually \emph{open} and \emph{dense} in the space $\mathcal V_{\mathsf{trav}}(X)$ of all traversing fields (see Theorem \ref{th3.5} below). We would like to provide the reader with an arguments that does not rely heavily the jet magic of the singularity theory in general, and on the Boardman Stratification Theory \cite{Bo} in particular. However, reluctantly, we are going to use few implications of that theory.

Let $M, N$ be smooth manifolds. For a given smooth map $\Phi: M \to N$, we denote by $\Sigma_{[j]}(\Phi)$ the locus in $M$ that is defined inductively by the formula 
$$\Sigma_{[j]}(\Phi) := \big\{x \in \Sigma_{[j -1]}(\Phi) \, \big |\; \textup{rk}\big(D_x\Phi: \Sigma_{[j -1]}(\Phi) \to N\big) \leq \dim M - j\big\}$$ 
,  where $D_x\Phi$ denotes the differential of the map $\Phi$ at $x$. Note that this definition presumes that $\Sigma_{[j -1]}(\Phi)$ is a smooth manifold. 

Bordman Theorem \cite{Bo} claims that the maps  $\Phi \in C^\infty(M, N)$, for which all the loci $\Sigma_{[j]}(\Phi)$'s are smooth manifolds, form a residual set in $C^\infty(M, N)$. When $M$ is compact, this set is open and dense in  $C^\infty(M, N)$.

Recall that ``$\Sigma_{{\underbrace{1 \dots 1}_{j}}}(\Phi)$"  is a more conventional notation for the locus $\Sigma_{[j]}(\Phi)$.

Note that in general, $\Sigma_{[j]}(\Phi)$ differs from the set $$\Sigma_{(j)}(\Phi) := \{x \in M |\; rk (D_x\Phi) \leq n - j\}$$
, the locus where the rank of the differential $D\Phi$ drops by $j$. 
Both loci, $\Sigma_{[j]}(\Phi)$ and $\Sigma_{(j)}(\Phi)$, are two basic examples of the, so called,  \emph{Boardman strata}. 
\smallskip

Given a smooth map $\Phi: M \to N$, for each $x \in M$, we denote by $j_\Phi(x)$ the maximal integer $j$ such that $x \in \Sigma_{[j]}(\Phi)$.

\begin{definition}\label{def3.6} Let $M, N$ be a  smooth $n$-dimensional manifolds and let $\Phi: M \to N$ be a smooth map. 
We say that $\Phi$ is \emph{traversally generic} if
\begin{itemize}
\item for each $1 \leq j \leq n$,  the singular set $\Sigma_{[j]}(\Phi)$ is a regularly embedded submanifold of $M$ of codimension $j$, 
\item  the map $\Phi: \Sigma_{[j]}(\Phi) \setminus \Sigma_{[j+1]}(\Phi) \to N$ is an immersion,
\item for each $y \in N$, the images of the maps 
$$\big\{\Phi : \Sigma_{[j_\Phi(x)]}(\Phi) \to N\big\}_{\{x \in \Phi^{-1}(y) |\;  j_\Phi(x) \geq 1\}}$$
are in general position in the target manifold $N$. \hfill\break
Equivalently, for each $y \in N$, the images of the tangent spaces $$\{T_x(\Sigma_{[j_\Phi(x)]}(\Phi))\}_{\{x \in \Phi^{-1}(y) |\;  j_\Phi(x) \geq 1\}}$$ under the linear monomorphisms $$\big\{D_x\Phi : T_x(\Sigma_{[j_\Phi(x)]}(\Phi)) \to T_y(N)\big\}_{\{x \in \Phi^{-1}(y) |\;  j_\Phi(x) \geq 1\}}$$ are in general position in the tangent space $T_y(N)$. 
\end{itemize}

Let us denote the space of traversally generic maps by the symbol $\mathcal G^\ddagger(M, N)$. 

\hfill\qed
\end{definition}

The next theorem is a well-known but nontrivial implication of the \emph{Boardman Maps Theory} (see \cite{Bo}, \cite{GG}).

\begin{theorem}\label{th3.2} Let $M$ be a $n$-dimensional compact manifold, and $K \subset M$ a closed set. Assume that $\Phi_0: M \to \R^n$ is a smooth map which is traversally generic in the vicinity of $K$. 

Then the traversally generic maps $\Phi: M \to \R^n$ that coincide with the given map $\Phi_0$ in the vicinity of $K$ form an open and dense set $\mathcal G^\ddagger_{\Phi_0}(M, \R^n)$ in the space $C^\infty_{\Phi_0}(M, \R^n)$ of all smooth maps  which coincide with $\Phi_0$ in the vicinity of $K$. 
\end{theorem}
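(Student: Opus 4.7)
My plan is to establish openness and density separately, using Boardman's theorem (quoted in the excerpt) together with Thom's multi-jet transversality theorem, everything applied inside the affine space $C^\infty_{\Phi_0}(M,\R^n)$. Throughout I fix an open neighborhood $W \supset K$ on which $\Phi_0$ is already traversally generic, and a smooth cutoff $\beta:M\to[0,1]$ equal to $0$ on a smaller neighborhood $W'\subset W$ of $K$ and equal to $1$ outside $W$; perturbations are parametrized as $\Phi = \Phi_0 + \beta\cdot h$ with $h\in C^\infty(M,\R^n)$, so that $\Phi_0$ stays fixed on $W'$ while arbitrary finite-order jet modifications remain available on $M\setminus W$. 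For openness, each clause of Definition \ref{def3.6} is a transversality condition on a suitable jet or multi-jet extension of $\Phi$. The submanifold and immersion clauses amount to transversality of $j^r\Phi$ to the Boardman strata of $J^r(M,\R^n)$; the general position clause, for each $\ell\geq 2$, is transversality of the $\ell$-fold multi-jet $j^r_\ell\Phi$ to a Boardman-refined bad locus $Z_\ell\subset J^r_\ell(M,\R^n)$ consisting of tuples $(j^r_{x_1}\Phi,\dots,j^r_{x_\ell}\Phi)$ with a common image $y=\Phi(x_i)$ at which the subspaces $D_{x_i}\Phi\bigl(T_{x_i}\Sigma_{[j_\Phi(x_i)]}\bigr)$ fail to sum surjectively onto $T_y\R^n$. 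Since $M$ is compact and transversality to a closed Whitney-stratified subset of a (multi-)jet bundle is a $C^\infty$-open condition, the traversally generic maps form an open subset of $C^\infty_{\Phi_0}(M,\R^n)$.

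For density I would argue in two stages. Stage one uses Boardman's theorem, with perturbations of the form $\Phi_0+\beta\cdot h$, to approximate $\Phi_0$ by maps for which every $\Sigma_{[j]}(\Phi)$ is regularly embedded of codimension $j$ and $\Phi|_{\Sigma_{[j]}^\circ}$ is an immersion; compactness of $M$ promotes the Boardman residual set to an open dense one, and the cutoff construction preserves the constraint of agreeing with $\Phi_0$ near $K$. Stage two applies Thom's multi-jet transversality theorem to the bad loci $Z_\ell$: the evaluation map $h\mapsto j^r_\ell(\Phi_0+\beta\cdot h)$ is submersive over $(M\setminus W')^\ell$, because bump-function perturbations realize any prescribed finite-order multi-jet there, and the codimension of $Z_\ell$ is strictly greater than $\dim M^{(\ell)}$ on the relevant Boardman strata, so generic transversality forces $j^r_\ell\Phi$ to miss $Z_\ell$ altogether. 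Intersecting the two open dense subsets in $C^\infty_{\Phi_0}(M,\R^n)$ yields the asserted $\mathcal G^\ddagger_{\Phi_0}(M,\R^n)$.

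The main obstacle is the multi-point general position clause: unlike the submanifold and immersion clauses, it is not a single-jet condition and is not directly supplied by Boardman's theorem, so the multi-jet machinery is essential. The two technical items that must be pinned down are (a) that each Boardman-refined locus $Z_\ell$ is a closed Whitney-stratified subvariety of sufficiently high codimension---this is extracted from a linear-algebraic description of failure of general position (vanishing of appropriate wedge products, or drop in rank of a block matrix of Jacobians) refined factor-by-factor by the Boardman stratification---and (b) that the cutoff-constrained family $\Phi_0+\beta\cdot h$ still evaluates submersively onto multi-jet space over $(M\setminus W')^\ell$, which follows from the standard bump-function construction since $\beta\equiv 1$ off $W$. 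Modulo these standard but delicate items from the Boardman-Thom framework, the density assertion reduces to a routine invocation of multi-jet transversality.
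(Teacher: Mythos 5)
Your proposal is correct and follows essentially the same route as the paper: both deduce the result from the Boardman stratification theory (for the single-point clauses) combined with Thom's multi-jet transversality theorem (for the normal crossing / general position clause), and both handle the relative constraint near $K$ by a cutoff perturbation $\Phi_0 + \beta h$ that keeps the map frozen on a smaller neighborhood while allowing free jet modifications outside. The only difference is one of exposition: the paper simply cites Theorem 5.2 of Golubitsky--Guillemin for the case $K=\emptyset$ and sketches the relative extension, whereas you reconstruct that argument explicitly, including the codimension count showing the bad multi-jet locus $Z_\ell$ is avoided entirely. That count indeed works out: with $\ell$ points in strata of Boardman codimensions $k_1,\dots,k_\ell$, the coincidence of targets costs $(\ell-1)n$, the Boardman conditions cost $\sum k_i$, and (when $\sum k_i\le n$) the failure of the linear general-position condition costs an additional $n-\sum k_i+1$, for a total of $\ell n + 1 > \dim M^{(\ell)}$; when $\sum k_i>n$ the count is even easier. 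So your ``miss $Z_\ell$ altogether'' assertion is justified, and the two technical items (a) and (b) you flag are exactly the standard ingredients needed. No gap.
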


\begin{proof}  When $K = \emptyset$, the theorem is a special case of Theorem 5.2 from \cite{GG}. That theorem claims that the set of all \emph{Boardman maps} (see \cite{Bo} for the relevant definitions) satisfying the \emph{normal crossing condition} (the ``NC condition" on page 157 in \cite{GG}) is residual (open and dense when $X$ is compact) in $C^\infty(X, Y)$.  By their very definition, the Boardman maps satisfy the first two bullets of Definition \ref{def3.6}, while the third bullet is exactly the formulation of the normal crossing property.

When $K \neq \emptyset$, the openness of traversally generic maps in the space $C^\infty(M, \R^n)$ implies that traversally generic maps which coincide with $\Phi_0$ in the vicinity of $K$ are open in the subspace $C^\infty_{\Phi_0}(M, \R^n) \subset C^\infty(M, \R^n)$. 

Tracing the proof of Theorem 5.2 from \cite{GG}, the density of  $\mathcal G^\ddagger_{\Phi_0}(M, \R^n)$ in $C^\infty_{\Phi_0}(M, \R^n)$ is validated by the following general observations. If, for a map from $\Phi: M \to \R^n$,  its $N$-jet $j^N(\Phi)$ is transversal to some variety $W \subset Jet^N(M, \R^n)$ (the transversality of $j^N(\Phi)$ to this variety is equivalent to the property of $f$ being a Bordman map)  at a compact set $K \subset M$, then $\Phi$ can be perturbed to a map $\Phi'$ such that $\Phi' = \Phi$ in the vicinity of $K$ and $j^N(\Phi')$ is transversal to $W \subset Jet^N(M, \R^n)$ everywhere on $M$. Using Thom's  Multijet Transversality Theorem (see \cite{GG}, Theorem 4.13), similar extension principle applies to maps that satisfy the normal crossing condition.
\end{proof}

Let $A, B$ be topological spaces. Recall that a map $\Psi: A \to B$  is called \emph{quasi-open}, if the interior $\textup{int}(\Psi(U)) \neq \emptyset$ for any open set $U \subset A$.
\smallskip

The next three simple lemmas will prepare us for the pivotal Lemma \ref{lem3.10} below. 

\begin{lemma}\label{lem3.7} Let  $A$ and $B$ be two topological spaces. Assume that there exists a pair of continuous maps, $\Psi: A \to B$ and $\Theta: B \to A$, such that $\Psi \circ \Theta = Id_B$. Then the $\Psi$ is a quasi-open map.
\end{lemma}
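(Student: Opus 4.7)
The plan is a short diagram chase: use continuity of $\Theta$ to pull an arbitrary open set $U \subseteq A$ back to an open set in $B$, and then exploit the identity $\Psi \circ \Theta = Id_B$ to see that this pull-back sits inside $\Psi(U)$.

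More concretely, I would set $V := \Theta^{-1}(U)$. Since $\Theta$ is continuous, $V$ is open in $B$. The key inclusion to verify is $V \subseteq \Psi(U)$: for every $b \in V$ one has $\Theta(b) \in U$ by the very definition of $V$, and then $b = \Psi(\Theta(b)) \in \Psi(U)$ by the hypothesis $\Psi \circ \Theta = Id_B$. Because $V$ is open in $B$ and contained in $\Psi(U)$, this immediately gives $V \subseteq \textup{int}(\Psi(U))$, and the quasi-openness of $\Psi$ follows at once provided $V$ is non-empty.

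The only point that needs a brief comment is the non-emptiness of $V$, equivalently the condition that $U$ meets the image $\Theta(B)$. Here it is useful to observe that $\Psi \circ \Theta = Id_B$ already forces $\Theta$ to be injective, and in fact a topological embedding with $\Psi|_{\Theta(B)}$ as its continuous left inverse; thus $\Theta(B) \subset A$ is a homeomorphic copy of $B$ inside $A$. In the way the paper invokes this lemma, the open sets $U$ under consideration will meet $\Theta(B)$, so $V$ is genuinely non-empty and the conclusion is substantive. I do not anticipate a serious obstacle: the content of the lemma is simply that a continuous map admitting a continuous section must be quasi-open, and the two-line chase above is all that is required.
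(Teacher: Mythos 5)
Your chase coincides with the paper's own argument, and you are right to pause at the non-emptiness of $V := \Theta^{-1}(U)$; but you then talk yourself out of the objection (``I do not anticipate a serious obstacle''), when in fact the objection is fatal to the lemma as stated. The paper's proof asserts, without justification, that $\Psi \circ \Theta = Id_B$ forces $\Theta^{-1}(U) \neq \emptyset$ for every non-empty open $U \subset A$; that claim is false, and so is the lemma. Concretely: take $B = \R$, $A = \R \sqcup \{\star\}$ with the disjoint-union topology, $\Theta$ the inclusion of $\R$ into $A$, and $\Psi$ the identity on $\R$ with $\Psi(\star) = 0$. Both maps are continuous, $\Psi \circ \Theta = Id_B$, and $\Theta$ is indeed a topological embedding (onto the closed subset $\R$ of $A$); yet $U = \{\star\}$ is a non-empty open set with $\Psi(U) = \{0\}$, whose interior in $\R$ is empty, so $\Psi$ is not quasi-open. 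Your observation that $\Theta$ is an embedding is correct but does not help; what the argument actually needs is that $\Theta(B)$ be \emph{dense} in $A$, which is what makes $U \cap \Theta(B) \neq \emptyset$ automatic for every non-empty open $U$.

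Note also that the informal patch you suggest---that the open sets arising in the paper's applications happen to meet $\Theta(B)$---would not suffice as written: Lemma \ref{lem3.8} (folded with this one into Lemma \ref{lem3.9}) applies quasi-openness to an arbitrary non-empty open $U$ produced inside a proof by contradiction, with no a priori control on whether $U$ meets $\Theta(B)$. The clean repairs are either to add the hypothesis that $\Theta(B)$ is dense in $A$, under which your inclusion $\Theta^{-1}(U) \subseteq \Psi(U)$ closes the argument for every non-empty open $U$, or, in the affine function-space setting actually used for the restriction operator in Lemma \ref{lem3.10}, to prove the stronger unconditional statement that a continuous affine surjection admitting a continuous affine section is an \emph{open} map: given $a \in U$ open and $b$ near $\Psi(a)$, the point $a' := a + \bigl(\Theta(b) - \Theta(\Psi(a))\bigr)$ lies in $U$ and satisfies $\Psi(a') = b$. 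So your instinct in flagging the non-emptiness of $V$ is exactly right and identifies a genuine gap; what is missing is a precise hypothesis or a change of setting that makes that non-emptiness automatic.
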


\begin{proof} By the continuity of $\Theta$, for any open set $U \subset A$, the set $\Theta^{-1}(U)$ is open in $B$. The identity $\Psi \circ \Theta = Id_B$ implies that $\Theta^{-1}(U) \subset \Psi(U)$. The same identity implies that if $U \neq \emptyset$ then $\Theta^{-1}(U) \neq \emptyset$.  So the $\Psi$-image of any non-empty open set contains a non-empty open set.
\end{proof}

\begin{lemma}\label{lem3.8} Let $\Psi: A \to B$ be a continuous quasi-open map. Let a subset $E \subset B$ be open and dense in $B$. Then $\Psi^{-1}(E)$ is open and dense in $A$.
\end{lemma}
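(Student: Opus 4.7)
The plan is to verify openness and density separately, both by essentially one-line arguments built directly from the definitions.

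For openness, I would simply note that $E \subset B$ is open and $\Psi : A \to B$ is continuous, so $\Psi^{-1}(E)$ is open in $A$ by the very definition of continuity. No further work is needed.

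For density, I would take an arbitrary non-empty open set $U \subset A$ and show that $U \cap \Psi^{-1}(E) \neq \emptyset$. The quasi-openness hypothesis gives that $\textup{int}(\Psi(U))$ is a non-empty open subset of $B$. Since $E$ is dense in $B$, the intersection $\textup{int}(\Psi(U)) \cap E$ is non-empty; pick a point $y$ in it. Because $y \in \Psi(U)$, there exists $x \in U$ with $\Psi(x) = y \in E$, so $x \in U \cap \Psi^{-1}(E)$. As $U$ was arbitrary, this shows $\Psi^{-1}(E)$ meets every non-empty open subset of $A$, i.e., it is dense.

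There is no real obstacle here: both properties follow tautologically once one unpacks ``quasi-open'' and ``dense''. The only subtle point worth flagging is that the argument uses quasi-openness precisely at the step of extracting an \emph{open} set from $\Psi(U)$ so that density of $E$ can be invoked; without this, $\Psi(U)$ could be a topologically negligible subset of $B$ and density of $E$ would give us nothing.
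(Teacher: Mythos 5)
Your proof is correct and follows essentially the same route as the paper's: continuity gives openness of the preimage, and density follows by applying quasi-openness to an arbitrary non-empty open $U \subset A$ and then using density of $E$ inside the resulting open subset of $\Psi(U)$. The only cosmetic difference is that the paper phrases the density step as a proof by contradiction, whereas you argue directly.
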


\begin{proof} By the continuity of $\Psi$, the set $\Psi^{-1}(E)$ is open. Assume to the contrary that $\Psi^{-1}(E)$ is not dense in $A$. Then there exists an open set $U$ such that $U \cap \Psi^{-1}(E) = \emptyset$. Consider the set $\Psi(U)$. Since the map $\Psi$ is quasi-open, $\Psi(U)$ must contain a nonempty open subset $V$. Since $E$ is dense, the open set $V \cap E \neq \emptyset$. Therefore there exists a point $a \in U$ such that $\Psi(a) \in V \cap E$. This implies that there is $a \in \Psi^{-1}(E) \cap U$, a contradiction with the assumption that $\Psi^{-1}(E) \cap U = \emptyset$. 
\end{proof}

By combining the previous two lemmas, we get the following corollary.

\begin{lemma}\label{lem3.9} Let  $A$ and $B$ be two topological spaces. Assume that there exists a pair of continuous maps, $\Psi: A \to B$ and $\Theta: B \to A$, such that $\Psi \circ \Theta = Id_B$. Let a subset $E \subset B$ be open and dense in $B$. Then $\Psi^{-1}(E)$ is open and dense in $A$. \hfill\qed
\end{lemma}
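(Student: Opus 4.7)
The plan is to chain the two preceding lemmas together, with the continuous section $\Theta$ serving as the bridge. No ideas beyond those already packaged in Lemma \ref{lem3.7} and Lemma \ref{lem3.8} are needed; the whole argument is essentially a one-line syllogism, as the phrasing ``By combining the previous two lemmas'' in the excerpt suggests.

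First, I would feed the hypotheses of Lemma \ref{lem3.9} directly into Lemma \ref{lem3.7}. The data we are handed -- continuous maps $\Psi: A \to B$ and $\Theta: B \to A$ satisfying $\Psi \circ \Theta = \mathrm{Id}_B$ -- is verbatim the hypothesis of Lemma \ref{lem3.7}. Its conclusion yields that $\Psi$ is quasi-open (continuity of $\Psi$ is already assumed).

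Next, I would invoke Lemma \ref{lem3.8} with this same $\Psi$ and the given open dense $E \subset B$. The hypotheses of Lemma \ref{lem3.8} (a continuous quasi-open map together with an open dense target subset) are now all in place, and its conclusion is exactly that $\Psi^{-1}(E)$ is open and dense in $A$, which is the statement of Lemma \ref{lem3.9}.

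There is no real obstacle: the only small bookkeeping point is to confirm that the roles of $A$, $B$, $\Psi$, $E$ match between the two lemmas, which they do by construction. It is worth remarking in the write-up that the section $\Theta$ disappears after Lemma \ref{lem3.7} is applied -- its sole purpose is to witness the quasi-openness of $\Psi$, after which only the continuity of $\Psi$ and the open-density of $E$ drive the rest.
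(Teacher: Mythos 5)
Your proposal is correct and matches the paper exactly: the paper states Lemma \ref{lem3.9} as an immediate consequence of Lemmas \ref{lem3.7} and \ref{lem3.8}, and your two-step chaining (first obtain quasi-openness of $\Psi$ from the section $\Theta$, then feed that into Lemma \ref{lem3.8}) is precisely the intended one-line syllogism.
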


The next lemma claims that each trajectory $\g$ of a traversing boundary generic field $v$ has a pair of special neighborhoods $V \subset U$ such that the field has an arbitrary $C^\infty$-small perturbation which is  supported in $U$ and is traversally generic with respect to $\d_1X \cap W$.  It is important to clarify the nature of the claim: \emph{first} one chooses the right neighborhoods,  and \emph{then}, in these neighborhoods, \emph{arbitrary small} perturbations of $v$ with the desired properties are available.  

As usually, we extend a given traversing field $v$ on $X$ to a germ of manifold $\hat X$ that properly contains $X$ so that the extended field $\hat v$ is traversing in $\hat X$. 

\begin{lemma}\label{lem3.10} Let  $v$ be a traversing and boundary generic field on $X$, and $\g$ its trajectory. Assume that  $v$ is traversally generic in the vicinity of a closed $\hat v$-adjusted set $F \subset \hat X$. 

Then there is a triple $W \subset V \subset U$ of compact $\hat v$-adjusted neighborhoods of $\g$ in $\hat X$ with the following properties: 
\begin{itemize}
\item there exists an arbitrary $C^\infty$-small and $U$-supported perturbation $\hat v'$ of $\hat v$, such that $v' := \hat v'|_{X}$ still is traversing and boundary generic,
\item $\hat v'$ is \emph{traversally generic} with respect to $\d_1X \cap V$, 
\item for any $\hat v'$-trajectory $\g'$ which intersects $W$,  $\g' \cap \d_1X \subset \d_1X \cap V$,
\item $\hat v' = \hat v$ in the vicinity of $F \cup (\hat X \setminus U)$ in $\hat X$. 
\end{itemize}
\end{lemma}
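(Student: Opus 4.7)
The plan is to reduce Lemma \ref{lem3.10} to the Boardman genericity Theorem \ref{th3.2} by setting up a dictionary between small perturbations of $\hat v$ supported near $\g$ and small perturbations of a flow-induced projection from $\d_1X$ to a transversal section. First I would invoke Lemma \ref{lem3.4} to obtain a compact $\hat v$-adjusted neighborhood $U_0 \subset \hat X$ of $\g$ equipped with coordinates $(u, y) = (u, y_1, \dots, y_n)$ in which $\hat v = \d/\d u$ and every $\hat v$-trajectory is $\{y = \mathrm{const}\}$. I would fix a transversal section $S = \{u = u_\star\}$ below the lowest point of $\g \cap \d_1X$, identify $S$ with a disk in $\R^n$, set $M := \d_1X \cap U_0$, and define $\Phi_0 := \Phi_{\hat v} : M \to \R^n$ to be the $\hat v$-flow-induced projection $a \mapsto y$-coordinate of the $\hat v$-orbit of $a$ on $S$.

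Next I would establish the central dictionary: by Lemma \ref{lem3.1}, the Morse stratum $\d_jX^\circ \cap U_0$ corresponds to the Boardman stratum $\Sigma_{[j-1]}(\Phi_0)^\circ \subset M$, and the traversal-genericity condition on $\hat v$ at its trajectories (Definitions \ref{def3.2} and \ref{def3.4}) translates exactly into the normal-crossing/general-position condition on tangent-space images from Definition \ref{def3.6}. Using this identification, I would define two continuous maps
\[
\Psi : \mathcal V_{U_0, F}(\hat X) \longrightarrow C^\infty_{\Phi_0}(M, \R^n), \qquad \hat v' \mapsto \Phi_{\hat v'},
\]
where $\mathcal V_{U_0, F}(\hat X)$ is the space of fields coinciding with $\hat v$ off $U_0$ and near $F$, together with a continuous section $\Theta$ going the other way. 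To build $\Theta$, given a small perturbation $\Phi'$ of $\Phi_0$ equal to $\Phi_0$ near the relevant compact set, I would realize $\Phi'$ as the projection associated with a field $\hat v' := \Theta(\Phi')$ obtained by pulling back $\hat v$ under an ambient diffeomorphism of $U_0$ that preserves $u$-levels, equals the identity outside $U_0$ and near $F$, and pushes $\Phi_0|_M$ to $\Phi'$. By construction $\Psi \circ \Theta = \mathrm{Id}$, so Lemmas \ref{lem3.7}--\ref{lem3.9} apply.

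Then I would apply Theorem \ref{th3.2} with $K := (F \cap M) \cup (M \setminus U_0')$ for a slightly smaller $U_0' \subset U_0$ to obtain an open dense subset $\mathcal G^\ddagger_{\Phi_0}(M, \R^n) \subset C^\infty_{\Phi_0}(M, \R^n)$ of traversally generic maps; by Lemma \ref{lem3.9} its $\Psi$-preimage is open and dense in $\mathcal V_{U_0, F}(\hat X)$, so arbitrarily close to $\hat v$ there is $\hat v'$ with the required genericity of its induced projection. To finish, I would pick nested compact $\hat v$-adjusted neighborhoods $W \subset V \subset U \subset U_0'$ of $\g$ such that every $\hat v$-trajectory meeting $\overline W$ has all its boundary hits in the interior of $V$; since boundary-genericity, the traversing property, and the inclusion ``trajectories through $W$ hit $\d_1X$ only inside $V$'' are all $C^\infty$-stable conditions on this compact piece, they persist for $\hat v'$ provided the perturbation is chosen small enough, and the traversal genericity of $\Phi_{\hat v'}|_{V \cap M}$ translates back, via the dictionary, into the traversal genericity of $v'$ with respect to $\d_1X \cap V$.

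The hardest step will be verifying the dictionary between the Boardman strata of $\Phi_{\hat v}$ and the Morse strata of $\hat v$, and in particular matching the normal-crossing condition of Definition \ref{def3.6} with the general-position condition of Definition \ref{def3.2} at multi-point fibers of $\Phi_0$; this is essentially encoded in Lemma \ref{lem3.1} together with Lemma \ref{lem3.3}, but requires careful bookkeeping at the several points of $\g \cap \d_1X$ simultaneously. A secondary technical point is making the section $\Theta$ continuous and supported inside $U_0 \setminus \textrm{(nbhd of }F)$, so that the $\Psi$--$\Theta$ machinery really operates inside the restricted subspace $\mathcal V_{U_0, F}(\hat X)$ rather than in the full space of smooth fields on $\hat X$.
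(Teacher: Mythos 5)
Your proposal follows the same overall strategy as the paper: reduce to the Boardman genericity Theorem~\ref{th3.2} via the quasi-open-pullback machinery of Lemmas~\ref{lem3.7}--\ref{lem3.9}, using the dictionary between the Morse strata $\{\d_jX^\circ\}$ of the flow and the Boardman strata $\{\Sigma_{[j-1]}\}$ of the flow-induced projection $\d_1X \to S$. The key difference is in how the crucial continuous section $\Theta$ is produced. The paper factors the assignment into two stages --- a map $J$ from fields in $\mathcal V_f(U,\d^F U)$ to submersions $p: U \to S$, with a section $K$ built from the foliation of fibers and a bump function, and then a linear restriction operator $\Psi$ on submersions, with an explicit affine section $\Theta(h') := p + \mathcal E_n(h' - \Psi(p))$ built from a bell-function extension operator $\mathcal E_n$ in a tubular neighborhood of $\d_1X$. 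Your $\Theta$, by contrast, goes directly from boundary maps $\Phi'$ to fields $\hat v'$ by pushing $\hat v$ forward along an ambient $u$-level-preserving diffeomorphism of $U_0$. That construction is plausible but you have not actually verified it: you need (a) that such a diffeomorphism exists for \emph{every} $\Phi' \in C^\infty_{\Phi_0}(M, \R^n)$ (not just those $C^\infty$-close to $\Phi_0$ --- Lemma~\ref{lem3.7} requires $\Theta$ defined on all of $B$ with $\Psi\circ\Theta = \mathrm{Id}_B$, and for $\Phi'$ far from $\Phi_0$ the required diffeomorphism of the $u$-slices need not exist); (b) that the choice is continuous as a map of function spaces; and (c) that the resulting field remains supported off a neighborhood of $F \cup (\hat X\setminus U)$ and still satisfies $df(\hat v')>0$. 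The paper's linear $\mathcal E_n$ sidesteps all three issues at once, which is why it is the cleaner route. Two smaller points: you cite Lemma~\ref{lem3.4} to obtain the coordinates $(u,y)$, but that lemma presupposes traversal genericity at $\g$, which is exactly what we cannot assume here --- all you need (and all the paper uses) is an ordinary flow-box cylinder $U \cong [0,1]\times D^n$; and you compress the paper's separate open-set $\mathcal A_{V,W}$ argument (the ``property $\mathsf A$'' controlling trajectories through $W$) into a one-sentence stability remark, which is fine in spirit but should be stated as an explicit openness condition intersected with the pullback of the dense set.
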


\begin{proof} Let $\dim(X) = n+1$. We denote by $D^n_r$ the closed Euclidean $n$-ball of radius $r$.

For any traversing vector field $v \in \mathcal V^\dagger(X)$, each trajectory $\g$ has regular $\hat v$-adjusted neighborhoods $W \subset V \subset U \subset \hat X$ such that $W \subset \textup{int}(V)$ and  $V \subset \textup{int}(U)$. With the help of the $\hat v$-flow, $U$, $V$, and $W$  are diffeomorphic to the cylinders $[0, 1] \times D^n_1$,  $[0, 1] \times D^n_{0.5}$ and $[0, 1] \times D^n_{0.25}$, respectively. 

Let $f: U \to [0, 1]$ denote the height function produced by this diffeomorphism, so that $df(\hat v) > 0$.  Let $S := f^{-1}(0)$ be a transversal section of the $\hat v$-flow in $U$.

Put $\d U := f^{-1}(\d D^n_1 \times [0, 1])$ and $\delta U := f^{-1}(D^n_1 \times \d[0, 1])$. 

Evidently, one can pick the tube $U$ with the core $\g$ so narrow that $\delta U \subset \hat X \setminus X$.

Consider two sets: 
$$\d^F U := \d U \cup (F \cap U)\;\, \text{and} \; \, \d^F S := \d S \cup (F \cap S).$$

\smallskip

Let $\mathcal V_f(U, \d^F U)$ be the space of all smooth vector fields $w$ in $U$ such that: 
\begin{itemize}
\item the field $w|_{\d^F  U}$ is proportional to the  given field $\hat v|_{\d^F  U}$\footnote{and thus tangent to $\d^F  U$.},  
\item $df(w) > 0$ in $U \setminus \delta U$, 
\item $w|_{\delta U} = 0$.
\end{itemize}
\begin{eqnarray}\label{eq3.25}
\end{eqnarray}

Such fields $w$ generate $1$-parameter flows $\Phi_w : U \times \R \to U$ for which $F \cap U$ is invariant. 
\smallskip

We may assume that $\hat v \in \mathcal V_f(U, \d^F  U)$ without modifying $v$ in $X$.  The $\hat v$-flow $\Phi_{\hat v}$ defines the obvious submersion $p_{\hat v} : U \to S$.

In fact, using the product structure in $U$, there exists a Riemannian metric $g$ on $U$ such that the gradient $\nabla_g(f) = \hat v$. (In such metric, the submersion $p_{\hat v} : U \to S$ is a harmonic map.)
\smallskip

We denote by $\textup{Sub}\big((U, \d^F U), (S, \d^F S)\big)$ the space of submersions $p: U \to S$ whose restriction to $\d^F U$  is equal to the given map $p_{\hat v}$.  Since each $w \in \mathcal V_f(U, \d^F U)$, with the help of the flow, defines a submersion $p_w$ whose restriction  to $\d^F U$ is prescribed and equals to $p_{\hat v}$, we get a continuous map 
\begin{eqnarray}\label{eq3.26}
J: \mathcal V_f(U, \d^F U) \to \textup{Sub}\big((U, \d^F U), (S, \d^F S)\big).
\end{eqnarray} 

This map $J$ is surjective. Indeed, the fibers of any submersion $p \in \textup{Sub}\big((U, \d^F U), (S, \d^F S)\big)$ form an oriented $1$-dimensional foliation $\mathcal F(p)$.  Using the product metric $g$ on $U$, we form a unit vector field $w(p)$, tangent to the fibers of $\mathcal F(p)$. By multiplying $w(p)$ by an appropriate universal smooth function $\phi: U \to \R_+$, such that $\phi > 0$ in $U \setminus \delta U$ and $\phi |_{ \delta U} = 0$, we produce a vector field $\tilde w(p) \in \mathcal V_f(U, \d^F U)$. Evidently, $p_{\tilde w(p)} = p$.

This construction $p \Rightarrow \mathcal F(p) \Rightarrow \phi\cdot w(p)$ defines a continuous map $$K: \textup{Sub}\big((U, \d^F U), (S, \d^F S)\big) \to \mathcal V_f(U, \d^F U)$$, such that $J \circ K = Id$.  Therefore, by Lemmas \ref{lem3.7}-\ref{lem3.8}, $J$ is a quasi-open map. 
\smallskip

For any $p \in\textup{Sub}\big((U, \d^F U), (S, \d^F S)\big)$,  the restriction of $p$ to $\d_1X \cap U$  produces a continuous restriction operator
\begin{eqnarray}\label{eq3.27}
\Psi: \textup{Sub}\big((U, \d^F U),\; (S, \d^F S)\big) \to C^\infty\big((\d_1X \cap U, \d_1X \cap \d^F U),\; (S, \d^F S)\big). \nonumber \\
\end{eqnarray}

Let us show that $\Psi$ is a \emph{quasi-open} map. To validate this claim, guided by Lemma \ref{lem3.9}, we will construct a continuous extension operator 
\begin{eqnarray}\label{eq3.28}
\Theta: C^\infty\big((\d_1X \cap U, \d_1X \cap \d^F U),\; (S, \d^F S)\big) \to C^\infty\big((U, \d^F U),\; (S, \d^F S)\big) \nonumber \\
\end{eqnarray}
whose composition with the restriction operator $$\Psi: C^\infty\big((U, \d^F U),\; (S, \d^F S)\big) \to C^\infty\big((\d_1X \cap U, \d_1X \cap \d^F U),\, (S, \d^F S)\big)$$ 
is the identity. The construction of operator $\Theta$ will depend on the choice of a map $p: (U, \d^F U) \to (S, \d^F S)$.
\smallskip

Consider a regular neighborhood $\mathcal N(\d_1X)$ of the submanifold $\d_1X \cap U$ in $U$. The neighborhood  fibers over its core $\d_1X \cap U$ with the fiber being a segment. Formula (\ref{eq3.7})  in Lemma \ref{lem3.1} implies that the hypersurfaces $\d U$ and $\d_1 X$ are transversal. Hence, we can choose the product structure $\pi: \mathcal N(\d_1X) \approx (\d_1 X \cap U) \times [-1, 1]$ so that the intersection $\d_1X \cap U =  \pi^{-1}\big((\d_1 X \cap U) \times \{0\}\big)$ and $\mathcal N(\d_1X) \cap \d U$ is entirely built of  fibers. 

For any smooth function $h: \d_1 X \cap U \to \R$, we aim to construct its canonical smooth extension $\hat H: \mathcal N(\d_1X) \to \R$. With this goal in mind, consider a $1$-parameter family $\{\phi_a: [-1, 1] \to \R\}_a$ of smooth bell-shaped functions such that $\phi_a(-1) = 0 = \phi_a(1)$ and $\phi_a := a \cdot \phi_0$ (so the parameter $a$ being the height $\phi_a(0) = a$ of the bell). Thus $\phi_{a +b} = \phi_a + \phi_b$. Note that, for $a > 0$, the function $\phi_a > 0$ in $(0,1)$; for $a < 0$, the function $\phi_a < 0$ in $(0,1)$; for $a= 0$, $\phi_0 = 0$.  

For a given $h: \d_1 X \cap U \to \R$, define a function $\tilde H: (\d_1 X \cap U) \times [0, 1] \to \R$ by the formula $\tilde H(x, t) := \phi_{h(x)}(t)$. Then put  $\hat H := \pi \circ \tilde H$. This function $\hat H : \mathcal N(\d_1X) \to \R$ extends to a smooth function $H: U \to \R$ by letting $H$ vanish on the complementary set $U \setminus  \mathcal N(\d_1X)$. 

Let $$\mathcal E: C^\infty(\d_1X \cap U, \R) \to C^\infty(U, \R)$$ be the continuous extension operator, defined by the formula $\mathcal E(h) := H$ for all $h \in C^\infty(\d_1X \cap U, \R)$. Since $\phi_a := a \cdot \phi_0$,  the operator $\mathcal E$ is linear.  Evidently, the operator $\mathcal E$ gives rise to a continuous linear operator $$\mathcal E_n: C^\infty(\d_1X \cap U, \R^n) \to C^\infty(U, \R^n).$$

This operator $\mathcal E_n$ will be instrumental in the construction of  extension operator $\Theta$ from (\ref{eq3.28}). In a sense, $\mathcal E_n$ can be viewed as ``the variation operator" for the operator  $\Theta$. 
\smallskip

For a given submersion $p \in \textup{Sub}\big((U, \d^F U),\, (S, \d^F S)\big)$, consider its restriction $\Psi(p) \in  C^\infty\big((\d_1X \cap U, \d_1X \cap \d^F U),\, (S, \d^F S)\big).$  

Note that, for a space $Y$, the difference $g_1 - g_0$ between two maps, $g_0: Y \to \R^n$ and $g_1: Y \to \R^n$, makes sense as a map from $Y$ to $\R^n$. 

Since the flow section $S \subset \R^n$,   for any two maps $$h, h' \in C^\infty\big((\d_1X \cap U, \d_1X \cap \d^F U),\, (S, \d^F S)\big)$$, the difference $h' - h: (\d_1X \cap U, \d_1X \cap \d^F U) \to (\R^n, 0)$ is well-defined. Take $h := \Psi(p)$ and consider the map $$p + \mathcal E_n(h' - \Psi(p)): (U, \d^F U) \to (\R^n , \d^F S).$$
Note that the restriction of this map to $\d^F U$ indeed takes $\d^F U$ to $\d^F S$:  by definition, the restrictions of $\Psi(p)$ and $h'$ to $\d_1X \cap \d^F U$ coincide, and $p$ takes  $\d_1X \cap \d^F U$ to $\d^F S$.  

Finally, we define the operator $\Theta := \Theta(p)$ in (\ref{eq3.28}) by the formula  $$\Theta(h') :=  p + \mathcal E_n(h' - \Psi(p)).$$

Due to the properties of $\{\phi_a\}_a$,  $\mathcal E_n(0) = 0$. Thus, $\Theta(\Psi(p)) = p$.  By the linearity of the operator $\mathcal E_n$ and using that $\Psi \circ \mathcal E_n = Id$, we get
 $$\Psi(\Theta(h')) := \Psi\big(p + \mathcal E_n(h' - \Psi(p))\big)  = \Psi(\mathcal E_n(h')) + \Psi(p - \mathcal E_n(\Psi(p)))$$
$$ = h' + \Psi(p) - \Psi(p) = h'.$$

Since $\Psi \circ \Theta = Id$, by Lemma \ref{lem3.7}, the restriction operator $\Psi$ is a quasi-open map. 

Consider the subset $$\mathcal G^\ddagger_V \subset C^\infty\big((\d_1X \cap U, \d_1X \cap \d^F U),\; (S, \d^F S)\big)$$ such that, for any $h \in \mathcal G^\ddagger_V$, the restriction $h: V \cap \d_1X \to S$ is a traversally generic map in the sense of Definition \ref{def3.6}. By Theorem \ref{th3.4}, $\mathcal G^\ddagger_V$ is an open and dense subset. Since $\Psi$ is a quasi-open map, by Lemmas \ref{lem3.7}-\ref{lem3.9}, $\Psi^{-1}(\mathcal G^\ddagger_V)$ is open and dense in 
the space $C^\infty\big((U, \d^F U),\; (S, \d^F S)\big)$. Since the space of submersions $\textup{Sub}\big((U, \d^F U),\, (S, \d^F S)\big)$  is open in the space $C^\infty\big((U, \d^F U),\, (S, \d^F S)\big)$, we conclude that the set $$\mathcal E^\ddagger_V := \Psi^{-1}(\mathcal G^\ddagger_V) \cap \textup{Sub}\big((U, \d^F U),\; (S, \d^F S)\big)$$ is open and dense in the space $\textup{Sub}\big((U, \d^F U),\; (S, \d^F S)\big)$.

Let us revisit  the map $J$ from (\ref{eq3.26}). Recall that $J$ is a quasi-open map. Therefore,  $J^{-1}(\mathcal E^\ddagger_V)$ is open and dense in the space $\mathcal V_f(U, \d^F U)$.

Thus we have shown that the fields that are traversally generic with respect  $\d_1X \cap V$ form an open and dense set in the space $\mathcal V_f(U, \d^F U)$. Let us spell out what 
this claim means:  for fields $\hat v' \in J^{-1}(\mathcal E^\ddagger_V)$,  each $\hat v' $-trajectory $\g$ intersects with $\d_1X \cap V$ along a finite set of points $\{a_i \in \d_{j_i}X(\hat v')^\circ\}_i$ in such a way that the differential $D p_{\hat v'}$ places the tangent spaces $\{T_{a_i}\big(\d_{j_i}X(\hat v')^\circ\big)\}_i$ in general position in $T(S)$. This behavior still leaves out an option:  some $\hat v' $-trajectory, passing through a point of $\d_1X \cap V$, may hit $\d_1X \cap U$ at a point $b \in \d_kX(\hat v')^\circ$ such that $b \notin V$. The $D p_{\hat v'}$-image of the tangent space $T_b(\d_kX(\hat v')^\circ)$ may not be in general position with respect to $D p_{\hat v'}$-images of the spaces $\{T_{a_i}\big(\d_{j_i}X(\hat v')^\circ\big)\}_i$.

To control better the flows in the vicinity of $W$, consider the fields $w \in  \mathcal V_f(U, \d^F U)$, having the following ``property $\mathsf A$": if a $w$-trajectory $\g'$ has a nonempty intersection with the set $W$ (recall that $W \subset V \subset U$), then the intersection $\g' \cap \d_1X \subset \d_1X \cap \textup{int}(V)$.  Let us denote by $\mathcal A_{V,W}$ the set of fields $w$ possessing  the property $\mathsf A$. 

Since $W$ is compact and properly contained in the interior of $V$, the set $\mathcal A_{V,W} \subset \mathcal V_f(U, \d^F U)$ is open due to the smooth dependence of the solutions of ODE's on the initial data and coefficients (non-vanishing vector fields). 

Therefore $J^{-1}(\mathcal E^\ddagger_V) \cap \mathcal A_{V,W}$ is open in $\mathcal V_f(U, \d^F U)$ and dense in $\mathcal A_{V,W}$. Evidently, the original field $\hat v \in \mathcal A_{V,W}$. So $\hat v$ admits an arbitrary $C^\infty$-small perturbation $\hat v'$ which belongs to $J^{-1}(\mathcal E^\ddagger_V) \cap \mathcal A_{V,W}$. By the definitions of all relevant spaces, $\hat v'$ possesses the properties listed in the lemma.
\end{proof}
\smallskip
 
Let $v \in \mathcal V^\dagger(X) \cap \mathcal V_{\mathsf{trav}}(X)$ and let $z: \hat X \to \R$ be as in (\ref{eq3.18}). Then along each trajectory $\g$, in the appropriate coordinates $(u, y)$, the locus $\d_1X$ can be described as the zero set of the function
\begin{eqnarray}\label{eq3.29}
z(u, y) = \prod_{a_i \in \g \cap \d_1X} \Big[ (u - u(a_i))^{j_i} + \sum_{l = 0}^{j_i - 2} \; \phi_{i, l}(y) (u - u(a_i))^l \Big]
\end{eqnarray}
, where $j_i$ denotes the multiplicity of $z|_\g$ at $a_i$, and $\phi_{i, l}(0) = 0$. 

We have seen the crucial role played in the previous arguments by the Jacobi $m'(\g) \times n$ matrix $D\Phi_\g$ whose rows are the vectors $\{\nabla_y \phi_{i, l}(0)\}_{i, l}$. Based on Lemma \ref{lem3.3}, we can give  still another interpretation to the condition $\textup{rk}(D\Phi_\g) = s$.  

As before, let $\g \cap \d_1X = \{a_i\}$, where $\{a_i\}$ are ordered by $v$. For each pair $(a_i, a_{i'})$,  in the vicinity of $\g$, consider the germ of the $\hat v$-flow generated diffeomorphism $\Psi_{i, i'}(\g): \hat X \to \hat X$ that takes $a_{i'}$ to $a_i$. 
In fact, the flow sections $S_i, S_{i'}$ at $a_i$ and $a_{i'}$ can be chosen so that $\Psi_{i, i'}(\g)(S_{i'}) = S_i$.

With the function $z: \hat X \to \R$ as in (\ref{eq3.18}) in place, at each $a_i \in \d_{j_i}X^\circ$, we  consider the 1-form $dz$ and its successive $v$-directed Lie derivatives 
$$\mathcal L_v(dz),\; \mathcal L_v^2(dz),\; \dots ,\;  \mathcal L_v^{j_i -2}(dz),$$ 
viewed as elements of the cotangent space $T^\ast_{a_i}(X)$.  Note that points $a_i \in \d_1X^\circ$ do not contribute to this list (there are two such points at most). By Theorem \ref{th3.3} below,  $\textup{rk}(D\Phi_\g)$ is the dimension of the space spanned by the $1$-forms
\begin{eqnarray}\label{eq3.30}
\Big\{\Psi_{i, 1}^\ast(\g )\big(dz |_{T_{a_i}S_i}\big),\; \Psi_{i, 1}^\ast(\g)\big(\mathcal L_v(dz)|_{T_{a_i}S_i}\big),\; \dots \nonumber \\
 \dots , \; \Psi_{i, 1}^\ast(\g)\big(\mathcal L_v^{j_i -2}(dz)|_{T_{a_i}S_i}\big)\Big\}_i
\end{eqnarray}
in the  cotangent space $T^\ast_{a_1}(S_1)$ of the section $S_1$. 
\smallskip

Theorem \ref{th3.3} expands the scope of this observation and incorporates the main claim from Theorem \ref{th3.1}. 

\begin{theorem}\label{th3.3} For a boundary generic traversing field $v \in \mathcal V^\dagger(X) \cap \mathcal V_{\mathsf{trav}}(X)$ the following properties are equivalent: 
\begin{itemize}
\item $v$ is traversally generic in the sense of Definition \ref{def3.2}, 
\item $v$ is versal in the sense of  Definition \ref{def3.5},
\item for each $v$-trajectory $\g$, $\textup{rk}(D\Phi_\g) = m'(\g)$\footnote{See formula (\ref{eq3.2}).}, where the Jacobi matrix  $D\Phi_\g$ is produced as in Lemma \ref{lem3.3}  from the representation (\ref{eq3.19}),
\item for each $v$-trajectory $\g$, the dimension of the space spanned by the  $1$-forms in (\ref{eq3.30}) is $m'(\g)$.
\end{itemize}
\end{theorem}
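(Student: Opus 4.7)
The plan is to exploit the work already done: the equivalence of the first two conditions is exactly the content of Theorem \ref{th3.1}, so I need only close the chain by proving (2)$\Leftrightarrow$(3) and (3)$\Leftrightarrow$(4). I will organize the argument as (2)$\Leftrightarrow$(3) via a direct appeal to the construction in Lemma \ref{lem3.4}, and (3)$\Leftrightarrow$(4) via the matrix-rank comparison of Lemma \ref{lem3.3} combined with an explicit coordinate computation of the Lie derivatives along $\g$.

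For (2)$\Rightarrow$(3): if $v$ is versal with respect to $\g$, then by Definition \ref{def3.5} one has special coordinates $(u, \{x_{i,l}\}, \{y_k\})$ in which $\d_1 X$ near $\g$ is cut out by (\ref{eq3.19}). In these coordinates the functions $\phi_{i,l}$ appearing in the factorization (\ref{eq3.29}) are simply the coordinate projections $y \mapsto x_{i,l}(y)$, so the Jacobi matrix $D\Phi_\g$ at the origin is an $m'(\g) \times n$ matrix composed of distinct standard basis covectors. Hence $\textup{rk}(D\Phi_\g) = m'(\g)$. Conversely, for (3)$\Rightarrow$(2) I would reread the proof of Lemma \ref{lem3.4}: there the only obstruction to assembling the special coordinates was precisely the condition $\textup{rk}(D\Phi_\g) = m'(\g)$; granted this rank, one takes $x_{i,l} := \phi_{i,l}(y)$ and completes them to a coordinate chart by adjoining a suitable $(n - m'(\g))$-tuple from $\{y_k\}$. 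This produces the versal form of $\d_1 X$.

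For (3)$\Leftrightarrow$(4): choose flow-adjusted coordinates $(u, y)$ on a $\hat v$-adjusted neighborhood of $\g$, with $v = \d_u$ and $S_i = \{u = u(a_i)\}$. In these coordinates the diffeomorphism $\Psi_{i,1}(\g)$ is the identity on the $y$-component, so $\Psi_{i,1}^\ast(dy_m) = dy_m$ for all $m$. Since $\mathcal L_v = \mathcal L_{\d_u}$ acts on functions as $\d/\d u$ and commutes with $d$, we have $\mathcal L_v^l(dz) = d\bigl(\d^l z/\d u^l\bigr)$, and restricting to $T_{a_i}S_i$ kills the $du$-component. Therefore
\[
\Psi_{i,1}^\ast\bigl(\mathcal L_v^l(dz)\big|_{T_{a_i}S_i}\bigr) = \sum_{m=1}^{n} \frac{\d^{l+1} z}{\d u^l\, \d y_m}(a_i, 0)\, dy_m,
\]
for $0 \leq l \leq j_i - 2$, and ranging over the $a_i \in \g \cap \d_1 X$ with $j_i \geq 2$ these covectors form precisely the rows of the matrix $M(z)$ of Lemma \ref{lem3.3}. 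Applying that lemma gives $\textup{rk}(M(z)) = \textup{rk}(D\Phi_\g)$, and hence the dimension of the span of the $1$-forms in (\ref{eq3.30}) equals $\textup{rk}(D\Phi_\g)$; this proves the equivalence of (3) and (4).

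The main obstacle is the bookkeeping in the (3)$\Leftrightarrow$(4) step: condition (4) is phrased invariantly in terms of Lie derivatives and flow pullbacks on the cotangent spaces of sections $S_i$, whereas (3) depends on the ``depressed'' factorization (\ref{eq3.19}), which is anchored to a specific pair $(u, y)$. One must verify that both descriptions are independent of the auxiliary choices (the function $z$ satisfying (\ref{eq3.18}), the invertible factor $Q$ in the Malgrange decomposition, and the particular flow-adjusted chart), and that the rank of $M(z)$ is indeed intrinsic—this is exactly the content of the last statement of Lemma \ref{lem3.3}, which I would cite rather than reprove. Once this invariance is in hand, the identification of the coefficients of the $1$-forms with the entries of $M(z)$ is a short direct computation as above.
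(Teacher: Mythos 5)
Your proof is correct and follows essentially the same route as the paper: equivalence of the first two bullets via Theorem \ref{th3.1}, equivalence with the rank condition via the constructions in Lemmas \ref{lem3.4} and \ref{lem3.6}, and equivalence with the Lie-derivative condition via the intrinsic rank statement of Lemma \ref{lem3.3} together with the coordinate computation $\mathcal L_{\d_u}^l(dz)\big|_{T_{a_i}S_i} = \sum_m \frac{\d^{l+1}z}{\d u^l\,\d y_m}\,dy_m$. The only cosmetic difference is that you establish $(2)\Rightarrow(3)$ by the immediate observation that in versal coordinates the coefficients $\phi_{i,l}$ may be taken to be the coordinate projections (choosing $z = P$, $Q \equiv 1$, with Lemma \ref{lem3.3} guaranteeing independence of this choice), whereas the paper routes that implication through the argument of Lemma \ref{lem3.6}; and you apply Lemma \ref{lem3.3} directly to the rows of $M(z)$ rather than first computing with the factors $P_i$ and then transferring to $z$, which is a slightly more economical bookkeeping of the same content.
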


\begin{proof} By Theorem \ref{th3.1}, if $v$ is versal, then it is traversally generic and vice versa. So the first two bullets in the theorem are equivalent.

By the argument in Lemma \ref{lem3.4}, the property of a traversing $v \in \mathcal V^\dagger(X)$ being traversally generic implies that the boundary $\d_1X$, in the vicinity of each $v$-trajectory $\g$ and in $\hat v$-adjusted coordinates $(u, y) \in  \R \times \R^n$, is given by an equation 
$$\prod_{a_i \in \g \cap \d_1X } \big[(u - u(a_i))^{j_i} + \sum_{l = 0}^{j_i -2}\phi_{i, l}(y)(u - u(a_i))^l \big] = 0$$
such that the map $\Phi_\g: \R^n \to \R^{m'(\g)}$, produced by the functions $\{\phi_{i, l}(y)\}_{i,l}$, has the Jacobi matrix $D\Phi_\g(0)$ of the maximal rank $m'(\g)$ at the origin $0 \in \R^n$. Further reasoning in Lemma \ref{lem3.4} has established that $\textup{rk}(D\Phi_\g(0)) = m'(\g)$ implies the existence of better coordinates $(u, x, \tilde y)$ in the vicinity of $\g$, the coordinates in which the boundary $\d_1X$ is given by a simpler $\tilde y$-independent equation
$$\prod_{a_i \in \g \cap \d_1X } \big[(u - u(a_i))^{j_i} + \sum_{l = 0}^{j_i -2}x_{i, l}(u - u(a_i))^l \big] = 0.$$
This is exactly the property of $v$ being versal. On the other hand, by the argument as in Lemma \ref{lem3.6}, the versality of a vector field field implies that $\textup{rk}(D\Phi_\g(0)) = m'(\g)$ 

Thus the equivalence of the properties in the first three bullets has been proven.
\smallskip

Finally, the equivalence of the last bullet with the rest is basically implied by Lemma \ref{lem3.3}. Here a computation that establishes the last equivalence.

Let $v \in \mathcal V^\dagger(X)$. Let us choose special coordinates $(u, \vec x) \in \R \times \R^n$ in the vicinity of a typical $\g$ so that the ``versal" boundary equation is given by (\ref{eq3.19}). Consider a typical multiplier $P_i$ in (\ref{eq3.19}). Then 
$$\frac{\d P_i}{\d \vec  x} =  \sum_{l = 0}^{j_i - 2} \big(u -  u(a_i)\big)^l\; \frac{\d x_{i, l}}{\d \vec x}.$$ 
Thus 
$$d P_i  =  j_i\big(u - u(a_i)\big)^{j_i -1}du + \Big\langle \frac{\d P_i}{\d \vec x}, d\vec x \Big\rangle$$ 
and its restriction to the section $S_i := \{u = u(a_i)\}$ is given by $\big\langle \frac{\d P_i}{\d \vec x}, d\vec x \big\rangle$. 
Therefore $$\mathcal L_{\d_u}^s(d P_i)|_{S_i} = \sum_{k = 1}^n \frac{\d^s }{\d u^s} \frac{\d P_i}{\d x_k}\, dx_k.$$
As a result, $\textup{rk}(D\Phi_i(\g))$ is equal to  the dimension of the space spanned by $$\{\mathcal L_{\d_u}^s(d P_i)|_{S_i} \in T^\ast_{a_i}(S_i)\}_{0 \leq s \leq j_i -2}.$$   
 
Similarly, $\textup{rk}(D\Phi(\g))$ equals  the dimension of the space spanned by 
$$\{\Psi_{a_i, a_1}^\ast(\mathcal L_{\d_u}^s(d P_i)|_{a_i}) \in  T^\ast_{a_1}(X)\}_{i,\; 0 \leq s \leq j_i - 2}.$$

By Lemma \ref{lem3.3}, the latter space coincides  with the space spanned by $$\{\Psi_{a_i, a_1}^\ast(\mathcal L_{\d_u}^s(d z)|_{a_i}) \in  T^\ast_{a_1}(X)\}_{i,\, 0 \leq s \leq j_i - 2}$$, where $$z(u, \vec x) =  Q(u, \vec x) \times \prod_i P_i(u, \vec x))$$ with $Q(u, 0) \neq 0$.

For any field $v \in \mathcal V^\dagger(X)$, this proves that the properties described in the last two bullets are equivalent.
\end{proof}
\smallskip

It is possible to globalize the local construction, defined by formulas (\ref{eq3.8}) - (\ref{eq3.9}). 

Let $X$ be a $(n+1)$-dimensional smooth compact manifold. Let $z: \hat X \to \R$ be a smooth function as in Lemma \ref{lem3.1}. Consider the sequence of already familiar functions: 
\begin{eqnarray}\label{eq3.31}
\psi_0 := z,\, \psi_1 := \mathcal L_v(\psi_0),\, \psi_2 :=  \mathcal L_v(\psi_1),\, \dots ,\, \psi_n := \mathcal L_v(\psi_{n-1}).
\end{eqnarray}
They gives rise to the smooth maps
\begin{eqnarray}\label{eq3.32} 
\Psi(v, z) := (\psi_0, \dots , \psi_n): X \to \R^{n +1}, \nonumber \\
\Psi^\d(v, z) := (\psi_1, \dots , \psi_n): \d_1 X \to \R^n. 
\end{eqnarray}

As in (\ref{eq3.8}) - (\ref{eq3.10}), the locus $\d_1X$ is defined by the equation $\{\psi_0 = 0\}$, the locus  $\d_2X$ by the equations $\{\psi_0 = 0, \psi_1 = 0\}$, and so on. The  locus $\d_jX$ is defined by the equations $\{\psi_0 = 0, \psi_1= 0, \dots , \psi_{j - 1} = 0\}$. We notice that $\d_j^+X$ is characterized by the additional inequality $\psi_j \geq 0$. Recall that, unlike the maps in (\ref{eq3.32}), these loci do not depend on the choice of the auxiliary function $z: \hat X \to \R$.

For a $(n+1)$-dimensional $X$, the formulas (\ref{eq3.32}) gives rise to the continuous maps
$$\mathbf{\Psi}_z: \mathcal V(X) \to C^\infty(X,\, \R^{n+1}),$$ 
$$\mathbf{\Psi}^\d_z: \mathcal V(X) \to C^\infty(\d_1X,\, \R^n).$$ 

Let $\psi_0, \psi_1, \dots \psi_{l -1}$ be the standard coordinates in $\R^l$. Consider the complete flag $$\mathsf F^l= \{\R^l := F_0 \supset F_1 \supset F_2 \dots \supset F_l := \{0\}\}$$, where $F_j \subset \R^l$ is defined by the equations $$\{\psi_0 = 0,\, \dots ,\, \psi_{j -1} = 0\}.$$ Each space $F_j$ is divided by $F_{j+1}$ into two halves: $F_j^+$ and $F_j^-$; the half-space $F_j^+$ is characterized by  the inequality $\psi_j \geq 0$. 
\smallskip

Let $\mathsf{Diff}_+^{\mathsf F}(\R^l)$ denote the group of smooth diffeomorphisms of $\R^l$ that preserve all the half-spaces $\{F^\pm_j\}$ invariant. 

\begin{definition}\label{def3.7} Let $M^k$ be a smooth compact $k$-manifold.  We say that a map $\Psi: M^k \to \R^l$ is $\mathsf F$-\emph{stable} if, for an open neighborhood $\mathcal O$ of $\Psi$ in $C^\infty(M^k, \R^l)$ and each $\Psi' \in \mathcal O$, there exists a smooth diffeomorphism $\chi: M^k \to M^k$ and a diffeomorphism $\phi \in \mathsf{Diff}_+^{\mathsf F}(\R^l)$ such that 
$$\phi \circ \Psi' = \Psi \circ \chi.$$ \hfill\qed
\end{definition}

\noindent{\bf Remark 3.4.} Let $\mathbf{\Psi}_z: \mathcal V^\dagger(X) \to C^\infty(X,\, \R^{n+1})$ be the map which takes each boundary generic field $v$ to the map $\Psi(v, z)$ from (\ref{eq3.32}). 

Evidently, if the map $\Psi(v, z): \d_1X \to \R^n$ is $\mathsf F^{n+1}$-stable, then for each $v' \in \mathbf{\Psi}_z^{-1}(\mathcal O)$, the appropriate $\chi$ from Definition 3.7  will map each stratum $\d_j^\pm X(v')$ to the stratum $\d_j^\pm X(v)$. 

By definition, the $\mathsf F^{n+1}$-stable maps form an open set in $C^\infty(X, \R^{n+1})$. However, in general, they do not form a dense subset. Recall that even the common stable maps $f: Y \to Z$, where $\dim(Y) = \dim(Z) = n+1$, are dense in the space of all smooth maps $C^\infty(Y, Z)$ only for $n < 8$ (see \cite{GG}, page 163.)! 

Nevertheless, in Theorem \ref{th3.4} below, we will establish the local stability of stratifications $\{\d_j^\pm X(v)\}_j$ in the vicinity of \emph{any} $v \in \mathcal V^\dagger(X)$---a much weaker property than the $\mathsf F^{n+1}$-stability of the map $\Psi(v, z)$.  
\hfill\qed 

\smallskip

By the arguments from Lemma \ref{lem3.1}, for any $v \in \mathcal V^\dagger(X)$, the map $\Psi(v, z): X \to \R^{n+1}$ is transversal to each space $F_j$ from the flag $$\mathsf F^{n+1} := \{\R^{n+1} \supset F_1 \supset F_2 \supset  \dots \supset F_{n+1}  = \{0\}\}$$
, and $\Psi(v, z)^{-1}(F_1) = \d_1X$. We describe this transversality by saying that $\Psi$ is ``transversal to the flag $\mathsf F^{n+1}$".

\smallskip

\noindent{\bf Question 3.1.} Which maps $\Theta: X \to \R^{n +1}$, transversal to the flag $\mathsf F^{n+1}$, have the form $\Psi(v, z)$ for some function $z$ as in Lemma \ref{lem3.1} and $v \in \mathcal V^\dagger(X)$? For some $v \in \mathcal V^\dagger(X) \cap \mathcal V_{\mathsf{trav}}(X)$?
\hfill\qed

\smallskip

The answer to this question eludes us. However, if we extend the function list in (\ref{eq3.32}) by introducing an additional function $\psi_{n+1} := \mathcal L_v(\psi_n)$, we will get an extension $\hat \Psi(z, v): X \to \R^{n+2}$ of the map $\Psi(z, v): X \to \R^{n+1}$.

 According to the lemma below, for a given ``extended" map $\hat\Theta: X \to \R^{n+2}$, the field $v$ such that  $\hat\Theta = \hat\Psi(z, v)$ is often unique. 

\begin{lemma}\label{lem3.11} Let $X$ be a compact $(n+1)$-dimensional manifold.  For a smooth map $\hat\Theta: X \to \R^{n+2}$, consider the composition $\Theta := \pi \circ \hat\Theta$, where $\pi: \R^{n+2} \to \R^{n +1}$ is the projection $(\psi_0, \dots , \psi_{n +1}) \to (\psi_0, \dots , \psi_n)$. 

Assume that $\Theta$ is transversal to the flag $\mathsf F^{n+1}$ and that $\textup{rk}(D\Theta) = n + 1$ on a dense subset $A$ of $X$. Then there is at most one boundary generic field $v$ on $X$ such that $\Theta = \Psi(z, v)$ and $\hat\Theta = \hat\Psi(z, v)$. 
\end{lemma}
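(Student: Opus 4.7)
The plan is to exploit the recursive definition of $\hat\Psi(z,v)$ to convert the identity $\hat\Theta = \hat\Psi(z,v)$ into a pointwise inhomogeneous linear system for $v$ whose coefficient matrix is precisely $D\Theta$. The rank hypothesis on $A$ then pins down $v$ on a dense subset, and smoothness finishes the job.

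More explicitly, write $\hat\Theta = (\theta_0, \theta_1, \dots, \theta_{n+1})$. First, $\hat\Theta = \hat\Psi(z,v)$ forces $z = \theta_0$ (so $z$ itself is determined by $\hat\Theta$) and then, by unwinding the recursion $\psi_k := \mathcal L_v(\psi_{k-1})$ in (\ref{eq3.31}), it forces the $n+1$ scalar identities
\begin{equation*}
\theta_{k+1}(x) \;=\; \mathcal L_v \theta_k(x) \;=\; \langle d\theta_k(x),\, v(x)\rangle, \qquad k = 0, 1, \dots, n.
\end{equation*}
Viewed pointwise at $x \in X$, this is an inhomogeneous linear system of $n+1$ equations in the $n+1$ unknowns comprising $v(x) \in T_x X$, with coefficient matrix exactly $D\Theta(x)$ (whose rows are $d\theta_0(x), \dots , d\theta_n(x)$) and right-hand side the column $(\theta_1(x), \dots, \theta_{n+1}(x))^\top$.

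At every $x \in A$ the assumption $\textup{rk}(D\Theta(x)) = n+1 = \dim T_x X$ makes $D\Theta(x)$ an isomorphism, so the system has a unique solution $v(x)$. Hence any two boundary generic fields $v, v'$ satisfying $\hat\Psi(z,v) = \hat\Theta = \hat\Psi(z,v')$ must coincide on the dense set $A$. Since such fields are in particular continuous, the equality $v = v'$ propagates to all of $X$.

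The argument is essentially a one-step linear algebra calculation, so no genuine obstacle arises. Two remarks are worth recording. First, the transversality of $\Theta$ to the flag $\mathsf F^{n+1}$ plays no role in the \emph{uniqueness} argument; it would enter a companion \emph{existence} statement (the subject of Question 3.1) by ensuring that the reconstructed $v$ is actually boundary generic. Second, the proof in fact establishes a slightly stronger claim than stated: \emph{any} smooth field $v$ (boundary generic or not) with $\hat\Theta = \hat\Psi(z,v)$ is uniquely determined by $\hat\Theta$ under the stated rank hypothesis.
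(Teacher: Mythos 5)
Your proof is correct and follows essentially the same route as the paper's: both reduce the identity $\hat\Theta = \hat\Psi(z,v)$ to a pointwise linear system of $n+1$ equations in the $n+1$ unknowns of $v(x)$, use the full-rank hypothesis on $D\Theta$ over the dense set $A$ to solve for $v$ uniquely there, and finish by continuity. The only difference is cosmetic: the paper chooses a Riemannian metric $g$ (normalized so that $\nabla_g\theta_0 = \nu$) and writes the Lie derivatives as inner products $\langle \nabla_g\theta_k, v\rangle$, whereas you work directly with the natural pairing $\langle d\theta_k, v\rangle$, which slightly streamlines the argument by avoiding the auxiliary metric. Your parenthetical observation that the transversality hypothesis is not used for uniqueness (only the rank condition is) is accurate and matches what the paper's proof actually exploits.
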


\begin{proof} We denote by $\theta_j$ the $j$-th component of the given map $\hat\Theta$. Let $g$ be a Riemannian metric on $X$, such that $\nabla_g(\theta_0) = \nu$, the unitary inward normal to $\d_1X$. Such metric $g$ exists since $\Theta$ is transversal to $F_1$.

If  $\Theta = \Psi(z, v)$, then $\theta_0 := z$, and $v$ must satisfy the equations: 
$$\theta_1 = \langle\nabla_g \theta_0, v\rangle,\; \theta_2 = \langle\nabla_g \theta_1, v\rangle,\; \dots \,, \theta_{n} = \langle\nabla_g \theta_{n -1}, v\rangle. $$ 
We impose an additional relation $$ \theta_{n+1} = \langle\nabla_g \theta_{n}, v\rangle$$
which reflects the assumption that $\Theta$ extends to $\hat\Theta:= \hat\Psi(z, v)$.

Since the fields $\nabla_g \theta_0, \nabla_g \theta_2, \dots \nabla_g \theta_{n}$  are assumed to be independent on the dense set $A \subset X$, the field $v$ is uniquely determined there by its scalar products with the fields $\{\nabla_g \theta_j\}_{0 \leq j \leq n}$. Since $A$ is dense, by continuity, there is at most a single field $v$ on $X$, such that $\hat\Theta = \hat\Psi(z, v)$. 
\end{proof}
\smallskip

Recall that, for a boundary generic field $v$ on $X$ and its trajectory $\g$, each point $a \in \g \cap \d_1X$ acquires some multiplicity $j(a)$. If the set $\g \cap \d_1X$ is finite and $\g$ is not a close trajectory (for instance, if $v \in \mathcal V_{\mathsf{trav}}(X) \cap \mathcal V^\dagger(X)$), then  the points of $\g \cap \d_1X$ are ordered. So we get an ordered sequence of  points $a_i \in \g \cap \d_1X$, together with their multiplicities $j(a_i)$. We call such weighted sequence $D_\g$ of points on $\g$ the \emph{divisor of} $\g$. Its degree is the multiplicity $m(\g) := \sum_{a \in \g \cap \d_1X} j(a)$ of $\g$ (see Definition \ref{def3.1}).

\begin{theorem}\label{th3.4} Let $X$ be a compact smooth $(n + 1)$-manifold with boundary. 
\begin{itemize}
\item The space $\mathcal V^\dagger(X)$ of  boundary generic fields\footnote{see Definition \ref{def2.1}.} is open and dense in the space $\mathcal V(X)$ of all vector fields on $X$. 
\item The smooth type of the Morse stratification $\{\d_jX(v)\}_j$ is locally constant within each path-connected component of  $\mathcal V^\dagger(X)$.
\item For any field $v \in \mathcal V^\dagger(X)$, there is a neighborhood $E$ of $\d_1X$ in $\hat X$, such that  the portion $\g \cap E$ of every  $\hat v$-trajectory $\g$ has the $E$-localized multiplicity $m(\g \cap E) \leq n+1$. Moreover, for each point $a \in \d_kX^\circ(v)$,  $m(\g \cap E) \leq k$ for all $\g$'s in the vicinity of the point $a$. 
\item Each point $a \in \d_kX(v)^\circ$ has a $\hat v$-adjusted neighborhood $U$ such that any  divisor $D$ in $\R$ with the properties $\deg(D) \leq k$ and $\deg(D) \equiv k \; (2)$ is realized, up to a diffeomorphism of $\R$, as the divisor $D_{\g \cap U}$ of the portion $\g \cap U$ for some $\hat v$-trajectory $\g$.
\end{itemize}
\end{theorem}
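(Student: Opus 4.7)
The plan is to handle the four bullets separately, with bullets (1) and (2) relying on standard transversality/isotopy technology, bullet (3) being a compactness-and-localization argument on top of Lemma \ref{lem3.1}, and bullet (4) reducing to an explicit realization problem for depressed real polynomials.

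\textbf{Bullets (1) and (2): openness, density, and stability.} The conditions in Definition \ref{def2.1} --- nonvanishing of $v|_{\d X}$, transversality of $v$ as a section of $T(X)$, and transversality of each induced section of the quotient line bundle $T_j^\nu$ --- are all open conditions on $v$ in the $C^\infty$ topology, so openness of $\mathcal V^\dagger(X) \subset \mathcal V(X)$ is immediate. For density I would apply Thom's transversality theorem inductively on $j$: first perturb $v$ so that $v|_{\d X}\neq 0$ and $v$, viewed as a section of $T(X)$, is transversal to the zero section; then once $\d_jX$ is known to be a smooth submanifold of $\d_{j-1}X$, perturb the (already generic) field only on a neighborhood of $\d_jX$ to make the induced section of $T_{j+1}^\nu$ transversal to zero, proceeding one stratum at a time until $j=n+1$. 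The local constancy in bullet (2) follows because along a path $\{v_t\}\subset \mathcal V^\dagger(X)$ each stratum $\d_jX(v_t)$ is a smooth family of submanifolds of fixed dimension and, by Morin's normal forms (Theorem \ref{th2.1}) together with Corollary \ref{cor2.1}, the nested stratification is locally stable; an Ehresmann/Thom first-isotopy argument upgrades this to a stratified ambient isotopy along the path.

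\textbf{Bullet (3): global multiplicity bound.} Here I would invoke Lemma \ref{lem3.1} directly. For each $a\in \d_kX^\circ(v)$, the third bullet of Lemma \ref{lem3.1} supplies a $\hat v$-adjusted neighborhood $U_a$ inside which every trajectory hits only strata $\d_jX^\circ$ with $\sum_{j\in J(a)} j \leq k$ and the parity $\equiv k\,(2)$. Since $k\in [1,n+1]$, the local contribution is already bounded by $n+1$. By compactness of $\d_1X$, cover it by finitely many such $U_a$'s, shrink their union to a tubular neighborhood $E$ of $\d_1X$ so thin that the connected components of $\g\cap E$, for any $\hat v$-trajectory $\g$, each sit inside a single $U_a$. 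This yields the pointwise estimates stated.

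\textbf{Bullet (4): realization of divisors.} By the second bullet of Lemma \ref{lem3.1}, some $\hat v$-adjusted neighborhood $U$ of $a\in \d_kX^\circ(v)$ carries coordinates $(u,x_0,\dots,x_{k-2},y_1,\dots,y_{n-k+1})$ in which $\d_1X\cap U$ is cut out by the depressed polynomial equation
\[
P(u,x)\;:=\;u^k+\sum_{j=0}^{k-2}x_j u^j\;=\;0,
\]
and each $\hat v$-trajectory in $U$ is obtained by freezing $(x,y)$. Given a target divisor $D=\sum_i n_i\delta_{t_i}$ with $m:=\sum_i n_i\leq k$ and $k-m$ even, set $R(u):=\prod_i(u-t_i)^{n_i}$ and seek a real monic polynomial $Q(u)$ of even degree $k-m$ with no real roots such that $P(u,x)=R(u)Q(u)$. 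The only constraint linking $R$ and $Q$ is that $R(u)Q(u)$ be depressed, which fixes the coefficient of $u^{k-m-1}$ in $Q$ to be $\sum_i n_i t_i$; after this single linear adjustment any $Q$ of the form $\bigl((u-s)^2+\varepsilon^2\bigr)^{(k-m)/2}$ (with $s,\varepsilon$ chosen to satisfy the constraint) works. Scaling $t_i \mapsto \lambda t_i$ and $x_j\mapsto \lambda^{k-j}x_j$ shows that after a diffeomorphism of $\R$ the required coefficient vector $x=x(D)$ can be taken as close to the origin as needed, hence inside $U$; the trajectory $\{x=x(D),\,y=0\}$ then realizes $D$.

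\textbf{Main obstacle.} The real technical work sits in bullet (1): setting up the inductive Thom transversality argument so that each perturbation step preserves the transversality conditions already achieved on lower-index strata. The stability statement in bullet (2) also requires some care, because the strata $\d_jX$ have their own boundaries $\d_{j+1}X\subset \d_jX$ and the ambient isotopy must be compatible with the entire nested filtration simultaneously; this is where I would lean most heavily on the rigid local algebraic form supplied by Morin's Theorem \ref{th2.1} and Theorem \ref{th2.2}, which pins down the germ of the stratification up to a diffeomorphism of each local model.
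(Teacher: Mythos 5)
Your four-bullet decomposition and overall strategy match the paper's, and the treatment of bullets (3) and (4) is essentially the paper's argument with more detail supplied (the paper dismisses bullet (4) in one sentence, whereas your factorization $P = RQ$ with $Q$ a product of complex-conjugate quadratics and the rescaling $x_j \mapsto \lambda^{k-j}x_j$ is a sound way to fill that gap; the case $\deg D = k$, where $Q \equiv 1$, needs a preliminary translation of $D$ to force $\sum_i n_i t_i = 0$, which the allowed diffeomorphism of $\R$ provides).

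The genuine divergence is in bullets (1) and (2), where you lean on generic transversality/isotopy machinery and the paper instead gives hands-on constructions tailored to avoid exactly the pitfall you flag as ``the main obstacle.'' For density, the paper does \emph{not} run Thom transversality on successive sections of $T_{j+1}^\nu$. Instead it writes $v = v_1 \oplus \nu_1$ near $\d_1X$ (tangential $\oplus$ normal), perturbs $\nu_1$ alone to be transverse to the zero section of $\nu(\d_1X,\hat X)$, and then \emph{freezes} $\nu_1$ for the remainder of the induction; this pins down $\d_2X = \{\nu_1 = 0\}$ once and for all before one decomposes $v_1 = v_2 \oplus \nu_2$ over $\d_2X$ and repeats. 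That freezing is precisely what keeps the target bundle $T_{j+1}^\nu \to \d_{j+1}X$ from drifting under later perturbations, which is the step your sketch leaves open: perturbing ``only near $\d_jX$'' still moves $\d_jX$ and hence the very bundle whose section you are trying to make transversal. Your plan is repairable along these lines, but as written the induction does not close. For bullet (2), the paper likewise avoids invoking Thom's first isotopy lemma for a stratified family. It exploits the transversality of the map $\Psi^\d(v,z): \d_1X \to \R^n$ to the complete flag $\mathsf F^n$, trivializes tubular neighborhoods of each $\d_jX(v)$ via the orthogonal projections $\pi_j$, and builds the matching diffeotopy stratum by stratum by sliding along the fibers $\Psi^{-1}(F_j^\perp(b))$. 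Appealing to the first isotopy lemma is reasonable in spirit, but one would still need to exhibit the requisite Whitney/control data for the nested filtration, which the paper's explicit flag construction sidesteps entirely.
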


\begin{proof} We fix a Riemannian metric $g$ on $\hat X$. Consider the functions $\{\psi_i\}$ from (\ref{eq3.31}) and the map $\Psi = \Psi(v, z)$ from (\ref{eq3.32}) that they generate. As in (\ref{eq3.7})-(\ref{eq3.9}), the property of $v$ being boundary-generic is equivalent to the requirement that the gradient vectors $\nabla \psi_0, \nabla \psi_1, \dots , \nabla \psi_{j-1}$ are linearly independent on the solution set of $$\{\psi_0 = 0, \psi_1 = 0, \dots , \psi_{j-1} = 0\}$$ for all $j$. 

In terms of the complete flags $\mathsf F^{n+1}$ (with $F_0 := \R^{n+1}$), a field $v \in \mathcal V^\dagger(X)$ if and only if, for each subspace $F_j\subset \R^{n+1}$, the rank of the differential $D\Psi := D\Psi(v, g)$, being restricted to the  bundle, normal to the set $\d_jX(v) = \Psi^{-1}(F_j)$ in $X$, is equal to $j$.  

Let us denote by $\mathcal V_{\{|_\d \neq 0\}}(X)$ the space of fields that do not  vanish on the boundary $\d_1X$. Evidently, any smooth section of the tangent bundle $T(X)$ can be approximated by a smooth section that does not vanish on $\d_1X$. Therefore it suffices to show that $\mathcal V^\dagger(X)$ is a dense (in the $C^\infty$-topology) subset of $\mathcal V_{\{|_\d \neq 0\}}(X)$ in order to conclude that $\mathcal V^\dagger(X)$ is a dense subset of $\mathcal V(X)$. 

Given any field $v$ which does not vanish on $\d_1X$, we decompose it along $\d_1X$ into the normal component $\nu_1$ and the tangent component $v_1$. Then we extend the decomposition   $v = v_1 \oplus \nu_1$ in a collar of $\d_1X$ in $\hat X$. It is possible to perturb $\nu_1$ to make sure that it defines a section of the normal bundle $\nu(\d_1X, \hat X)$ that is transversal to its zero section. The perturbation can be smoothly extended into a collar of $\d_1X$ in $\hat X$, where it will be supported.  Let us use the same notations for the perturbed field.\footnote{In fact, the transversality of $\nu_1$ to $\d_1X$ defines an open set in the space $\mathcal V(\hat X)$.}  The component $\nu_1$ will be fixed in the further perturbations. Its zero locus is the manifold $\d_2X$. Next, we consider the orthogonal decomposition $v_1 = v_2 \oplus \nu_2$, where $v_2$  is tangent to $\d_2X$ and $\nu_2$ is a section of the normal bundle $\nu(\d_2X, \d_1X)$. Again, it is possible to extend this decomposition into a collar  of $\d_2X$ in $\d_1X$.  Then we perturb $\nu_2$ to make it transversal to $\d_2X$ and extend this perturbation first into a collar of $\d_2X$ in $\d_1X$ and then further into a collar of $\d_1X$ in $\hat X$. The zero locus of $\nu_2$ defines a submanifold  $\d_3X \subset \d_2X$. Continuing this sequence of perturbations, we will produce a field from $\mathcal V^\dagger(X)$. Therefore such fields form a dense set in the space $\mathcal V_{\{|_\d \neq 0\}}(X)$, and thus, in the space of all fields $\mathcal V(X)$ (this fact can be derived from the Morin Theorem \ref{th2.1}  as well). 

To show that $\mathcal V^\dagger(X)$  is open, we use the local model employed in the proof of Lemma \ref{lem3.1}. Evidently, the linear independence of the gradient fields in (\ref{eq3.10})  in the vicinity of the solution set of (\ref{eq3.9}) is an open property imposed on the pair of functions $u(z, x), w(z, x)$ from that model. By Lemma \ref{lem3.1}, locally, this independence of fields is exactly the property of $v$ to define transversal sections of the quotient $1$-bundles $\{T(\d_{j - 1}X)/T(\d_jX)\}_j$. 

By covering a compact collar $E$ of $\d_1X$ in $\hat X$ with a finite system of compact coordinate charts $(z, x)$ as in the proof of Lemma \ref{lem3.1}, we conclude that  if (\ref{eq3.9}) and (\ref{eq3.10}) are satisfied by a field $v$ in each of the charts, then all sufficiently $C^\infty$-small perturbations of $v$ will satisfy similar conditions. Thus, $\mathcal V^\dagger(X)$ is open and dense in the space $\mathcal V_{\{|_\d \neq 0\}}(X)$ which, in turn, is open and dense in the space $\mathcal V(X)$. 
\smallskip

Now let us prove the claim in the second bullet of the theorem. 

Let $\mathcal F_j^\perp$ be the family of affine spaces that are orthogonal to the subspace $F_j \subset \R^n$, and let $F_j^\perp(b) \approx \R^j$ denote its typical member, a space that contains a point $b \in F_j$. Let $\pi_j: \R^n \to \R^j$ be the orthogonal projection whose fiber is $F_j$.

Recall that, for $v \in \mathcal V^\dagger(X)$, the map $\Psi := \Psi^\d(v, z)$ in (\ref{eq3.32}) is transversal to each $F_j \subset \R^n$,  $j \in [1, n]$.  Therefore, for any boundary generic $v$, there exists an open $\e$-ball $B^j_\e := B^j_\e(v) \subset \R^j$, centered on the origin, such that the map $$\pi_j\circ \Psi:\,  (\pi_j\circ \Psi)^{-1}(B^j_\e) \to B^j_\e$$ is a surjection (i.e., its Jacobian has the maximal rank $j$).

Let $U_{j, \e} := U_{j, \e}(v)$ denote the set $(\pi_j\circ \Psi)^{-1}(B^j_\e) \subset \d_1X$. There exists a diffeomorphism $$\a_j: U_{j, \e} \approx (\pi_j\circ \Psi)^{-1}(0) \times B^j_\e$$ which identifies $U_{j, \e}$ with a space of a trivial disk bundle over the base space $$\d_jX(v) = \Psi^{-1}(F_j) := (\pi_j\circ \Psi)^{-1}(0).$$ Thus $U_{j, \e}$ is a regular neighborhood of $\d_jX(v)$ in $\d_1X$.

For any  map $\Psi: \d_1X \to \R^n$, transversal to the complete flag $\mathsf F^n$ in $\R^n$, there  exists $\e > 0$ and an open neighborhood $\mathcal O_{j, \e}(\Psi) \subset C^\infty(\d_1X, \R^n)$ of the map $\Psi$,  such that, for each map $\Psi' \in \mathcal O_{j, \e}(\Psi)$, the following properties are valid: 

\begin{itemize}
\item $\Psi^{-1}(F_j) \subset U'_{j, \e/2} \subset U_{j,\e}$, where $U'_{j, \e/2} := (\pi_j\circ \Psi)^{-1}(B^j_{\e/2})$,
\item the manifold $(\Psi')^{-1}(F_j)$ is a smooth section of the trivial $j$-disk bundle $\b_j: U_{j, \e} \to \Psi^{-1}(F_j)$\footnote{defined with the help of the trivialization $\a_j$}, a section which is transversal  to the fibers $\{\Psi^{-1}(F_j^\perp(b)) \cap U_{j, \e}\}_{b \in F_j}$ of the bundle. 
\end{itemize}
\begin{eqnarray}\label{eq3.33}
\end{eqnarray} 

The existence of the neighborhood $\mathcal O_{j, \e}(\Psi)$ routinely follows from the openness of transversal families of smooth maps on compact sets.
\bigskip

Next, we form the open set  $\mathcal O_\e(\Psi) := \cap_{j =1}^n \mathcal O_{j, \e}(\Psi)$ in the vector space $C^\infty(\d_1X,\, \R^n)$, equipped with the Whitney topology. 
\smallskip

Let $$\mathbf {\Psi}: \mathcal V^\dagger(X) \to C^\infty(\d_1X, \R^n)$$ be the continuous map that takes a field $v' \in \mathcal V^\dagger(X)$ to the map $\Psi^\d(v', z) \in C^\infty(\d_1X, \R^n)$. 

Consider the open neighborhood of $v$: $$\mathcal U(v) := \mathbf {\Psi}^{-1}\big(\mathcal O_\e(\Psi(v))\big) \subset \mathcal V^\dagger(X).$$  We claim that, for any $v' \in \mathcal U(v)$, the Morse stratifications $\{\d_jX(v)\}_j$ and $\{\d_jX(v')\}_j$ can be transformed one into another by a diffeomorphism $\Phi$ of $X$ (actually, by a diffeotopy). Let us explain how to construct the matching diffeomorphism $\Phi$. 

Put $\Psi' := \Psi^\d(v', z)$ and $\Psi := \Psi^\d(v, z)$.  For any point $a \in \d_{j+1}X(v) := (\Psi)^{-1}(F_j)$, consider the unique point $$b_j(a) := (\Psi')^{-1}(F_j) \cap \Psi^{-1}(F^\perp_{\Psi(a)}).$$ The two properties in (\ref{eq3.33}) imply that  the correspondence $\phi_j: b_j(a) \to a$ is a diffeomorphism which maps $(\Psi')^{-1}(F_j)$ to $\Psi^{-1}(F_j)$. Consider the  linear diffeotopy $\{\phi_j(a, t)\}_{t \in [0, 1]}$  that takes each point $$b_j(t, a) : = (\Psi')^{-1}(F_j)\, \cap \, \Psi^{-1}\big((1 -t)\Psi(b_j(a)) + t\Psi(a)\big)$$ to the point $a \in \d_jX(v)$.  It can be induced by an ambient isotopy  
\begin{eqnarray}\label{eq3.34}   
\{\Phi_j^t(v, v'):\; \d_1X \to \d_1X\}_{t \in [0, 1]}
\end{eqnarray}
, supported in  the neighborhood $U_{j, \e}$.  That isotopy matches $\d_{j+1}X(v)$ with $\d_{j+1}X(v')$.

Now we will improve the previous construction by building a \emph{single} ambient isotopy $\Phi^t := \Phi^t(v, v')$ that matches of all the strata $\{\d_jX(v)\}_j$ with the corresponding strata $\{\d_jX(v')\}_j$ at once. This  will be achieved in stages, indexed by $j = 2, 3, \dots $. 

Starting with the top strata $\d_2X(v')$ and $\d_2X(v)$ and letting $t = 1$ and $j =1$ in (\ref{eq3.34}), we use $\Phi_1^1: \d_1X \to \d_1X$ to match them.  Then, inside $\d_2X(v)$, we construct a diffeotopy $\Phi^t_2: \d_2X(v) \to \d_2X(v)$ which takes $\Phi^1_1(\d_3X(v'))$ to $\d_3X(v)$. 

Here is a recipe for constructing $\Phi^t_2$. Let $F_{21}^\perp(a)$ denote the line that is orthogonal to $F_2$ inside $F_1$ and contains the point $\Psi(a)$. Then $\Phi^t_2$ moves each point $\Phi^1_1(b)$, $b \in \d_3X(v')$, towards the unique point $a \in \d_3X(v)$, such that both $a$ and $\Phi^1_1(b)$ belong to the curve $S_{21}(a) :=  \Psi^{-1}(F_{21}^\perp(a))$ (the motion takes place inside  the curve $S_{21}(a)$).  Due to the choice of the neighborhoods $\mathcal O_\e(\Psi) \subset \mathcal O_{1, \e}(\Psi) \cap \mathcal O_{2, \e}(\Psi)$ (see the properties in (\ref{eq3.33})) and using that the 2-disk $F_2^\perp(a) \subset \R^n$ is sliced into segments $\{F_1^\perp(c)\}_c$, where $c \in F_{21}^\perp(a)$, we conclude that the slice $S_{21}(a)$ indeed contains a unique point $\Phi^1_1(b)$, $b \in \d_3X(v')$. 

It is possible to extend the isotopy $\Phi^t_2: \d_2X(v) \to \d_2X(v)$ to an isotopy of the ambient $\d_1X$ and even of $X$. Abusing notations, we use the same symbol for the extended isotopy. Now, the composition of $\Phi_2^1 \circ \Phi_1^1$ matches the pair $\d_2X(v') \supset  \d_3X(v')$ with the pair $\d_2X(v) \supset  \d_3X(v)$.

Following this scheme, we eventually construct a diffeomorphism which matches the two stratifications $\{\d_jX(v')\}_j$ and $\{\d_jX(v)\}_j$ for each $v' \in \mathcal U(v) := \mathbf {\Psi}^{-1}\big(\mathcal O_\e(\Psi(v))\big)$. Therefore, the smooth type of the stratification $\{\d_jX(v)\}_j$ is locally stable as the function of  $v \in \mathcal V^\dagger(X)$. Note that we do not claim that $\Psi := \Psi^\d(v, z)$ is a $\mathsf F^n$-stable map in the sense of Definition \ref{def3.7}, a much stronger assertion! 

In fact, for any path-connected component of $\mathcal V^\dagger(X)$, the smooth topological type of the Morse stratification remains constant. Indeed, if the points $v_0$ and $v_1$ are connected by a continuous path $\g: [0,1] \to \mathcal V^\dagger(X)$, then each point $\g(t)$ produces an open neighborhood $\mathcal U(\g(t)) \subset \mathcal V^\dagger(X)$ as above. Using the compactness of image $\g([0, 1])$, we can cover it by a finite number of open sets $\{\mathcal U(\g(t_i))\}_i$. By the previous arguments, any pair of Morse stratifications $\{\d_jX(v)\}_j$ and $\{\d_jX(v')\}_j$, where $v, v' \in \mathcal U(\g(t_i))$, can be transformed one into the other by a diffeomorphism of $X$. Therefore $\{\d_jX(v_0)\}_j$ and $\{\d_jX(v_1)\}_j$ can be transformed one into the other by a finite composition of locally available diffeomorphisms.
\smallskip 

Now, let us validate the last two bullets of the theorem. Formula (\ref{eq3.7}) implies that  $m(\g \cap E) \leq n+1$ for all trajectories $\g$ in $E$. Thus $m'(\g \cap E) \leq n$. By the same token, if  $a \cap\d_kX(v)^\circ$, then $m(\g \cap U_a) \leq k$ for all $\g$ in a $\hat v$-adjusted tubular neighborhood $U_a$ of $a$. Of course, this implies that  $m'(\g \cap U_a) \leq k -1$. 

The last bullet of the theorem follows from Lemma \ref{lem3.1}, in particular from the existence of special coordinates $(u, x)$ in which formula (\ref{eq3.7}) is valid.
\end{proof}

Finally, we have reached the summit of this paper.

\begin{theorem}\label{th3.5} Let $X$ be a smooth compact $(n+1)$-dimensional manifold with boundary.
\begin{itemize}
\item The subspace  $\mathcal V^\ddagger(X)$ of traversally generic fields is open and dense in the space $\mathcal V_{\mathsf{trav}}(X)$ of all traversing fields\footnote{By definition, traversing fields do not vanish on $X$.}. 
\item If $v \in \mathcal V^\ddagger(X)$, then for every $v$-trajectory $\g$, we get  $m'(\g) \leq n$ and $m(\g) \leq 2(n + 1)$. 
\end{itemize}
\end{theorem}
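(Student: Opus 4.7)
My plan has two parts: establish the multiplicity bounds (second bullet) using the versal polynomial model from Lemma \ref{lem3.4} and Remark 3.3, then prove openness and density of $\mathcal V^\ddagger(X)$ in $\mathcal V_{\mathsf{trav}}(X)$ using the open rank characterization of Theorem \ref{th3.3} and an inductive application of Lemma \ref{lem3.10} over the compact trajectory space.

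For the multiplicity bounds, the inequality $m'(\g) \leq n$ is immediate from Remark 3.1. For $m(\g) \leq 2(n+1)$, the singleton case gives $m(\g) = 2$ trivially; for a non-singleton arc $\g$, I would use the versal coordinates of Lemma \ref{lem3.4} to describe $X \cap \g$ on the core $\{x = 0,\, y = 0\}$ as $\{u : \prod_i (u - \a_i)^{j_i} \leq 0\}$. This polynomial has leading coefficient $+1$ and roots only at the $\a_i$'s, so connectedness of $\g$ as an arc in $X$ forces the sign of $P(\cdot, 0)$ to change only at the two endpoints $\a_1, \a_p$; these endpoints must therefore have odd multiplicity, while all interior $\a_i$'s have even multiplicity $\geq 2$. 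Each interior point contributes at least $1$ to $m'(\g)$, so the interior count $p_e$ satisfies $p_e \leq m'(\g) \leq n$, and $m(\g) = m'(\g) + 2 + p_e \leq 2(n+1)$.

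For openness of $\mathcal V^\ddagger(X)$ in $\mathcal V_{\mathsf{trav}}(X)$, I would argue as follows. Let $v \in \mathcal V^\ddagger(X)$; by Theorem \ref{th3.1} it is versal, and each $v$-trajectory $\g$ admits a $\hat v$-adjusted neighborhood with coordinates from Lemma \ref{lem3.4} in which Lemma \ref{lem3.6} verifies traversal genericity of the local model. By Theorem \ref{th3.3}, traversal genericity at $\g$ is equivalent to the maximal-rank condition $\textup{rk}(D\Phi_\g) = m'(\g)$, a $C^\infty$-open condition on $v$. Covering $X$ by finitely many such neighborhoods (by compactness), any sufficiently small $C^\infty$-perturbation $v'$ of $v$ remains traversing and boundary generic (the latter by Theorem \ref{th3.4}) and preserves the open rank condition patch by patch, yielding $v' \in \mathcal V^\ddagger(X)$.

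For density, given $v_0 \in \mathcal V_{\mathsf{trav}}(X)$, I would first perturb it into $\mathcal V^\dagger(X) \cap \mathcal V_{\mathsf{trav}}(X)$ using the density half of Theorem \ref{th3.4}, then run an induction: at step $k$, maintain a closed $\hat v_k$-adjusted set $F_k$ on which $v_k$ is already traversally generic; choose a trajectory $\g_{k+1}$ meeting $X \setminus F_k$ and invoke Lemma \ref{lem3.10} with $F = F_k$ to produce a triple $W_{k+1} \subset V_{k+1} \subset U_{k+1}$ and an arbitrarily small $U_{k+1}$-supported perturbation $v_{k+1}$ that is traversally generic on $V_{k+1}$ and coincides with $v_k$ near $F_k$; set $F_{k+1} := F_k \cup W_{k+1}$. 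Compactness of the traversing trajectory space (from \cite{K1}), together with each $W_{k+1}$ projecting onto an open subset of it, forces termination in finitely many steps; shrinking perturbation sizes geometrically gives $v_k \to v_\infty \in \mathcal V^\ddagger(X)$ within any prescribed $C^\infty$-neighborhood of $v_0$. The hard part will be the preservation step: each $v_{k+1}$ alters the trajectory structure, but the third bullet of Lemma \ref{lem3.10}---confining the $\d_1 X$-intersections of $v_{k+1}$-trajectories through $W_{k+1}$ to the interior of $V_{k+1}$---is exactly what guarantees that traversal genericity on $F_k$ survives, so that $F_{k+1}$ is a genuine set of traversal genericity for $v_{k+1}$.
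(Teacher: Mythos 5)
Your treatment of the multiplicity bound is essentially the paper's argument, with a welcome explicit note on the singleton case that the paper itself glosses over: both you and the paper observe that only the two endpoints of $\g$ can be odd-multiplicity intersection points, that the number $q$ of interior (even-multiplicity, hence $j_i\geq 2$) points satisfies $q\leq m'(\g)\leq n$, and conclude $m(\g) = m'(\g) + q + 2 \leq 2n+2$. That part is fine.

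The openness argument, however, has a real gap. You characterize traversal genericity at $\g$ by $\textup{rk}(D\Phi_\g)=m'(\g)$ (Theorem \ref{th3.3}), invoke a finite cover of $X$ by versal neighborhoods, and then assert that a small perturbation ``preserves the open rank condition patch by patch.'' But the rank condition is attached to a trajectory as a whole: it tests the simultaneous general position of the flow-transported tangent spaces $\{\mathsf T_i(\g')\}_i$ at \emph{all} of $\g'\cap\d_1X$. A $v'$-trajectory $\g'$ need not have its entire intersection with $\d_1X$ contained in one patch, and if $\g'\cap\d_1X$ is split across several patches, ``patch by patch'' checking does not detect a failure of global general position. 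The paper avoids this by a different mechanism: it chooses $\hat v$-adjusted tubes $V_\a\subset U_\a$ around fixed flow sections $T_\a\subset S_\a$, verifies (using continuous dependence on the ODE data) that for $v'$ sufficiently close, every $\hat v'$-trajectory through $T_\a$ still lives inside $U_\a$ and still hits $S_\a$ transversally, and then applies the openness of traversally generic maps (Theorem \ref{th3.2}) to the submersions $p_\a:\delta V_\a\to S_\a$. The whole-trajectory configuration is thus encoded in a single map $p_\a$, and openness follows from the continuity of $v'\mapsto\{p_\a(\hat v')\}_\a$. Your route via the rank of $D\Phi_\g$ could possibly be repaired, but you would need the analogue of the paper's containment conditions to ensure each perturbed trajectory is fully governed by a single patch.

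For density your strategy is the paper's (iterating Lemma \ref{lem3.10}, and you correctly identify the third bullet of that lemma as the ingredient that protects already-achieved genericity on $F_k$), but the termination step is not justified as written. You choose a new trajectory $\g_{k+1}\subset X\setminus F_k$ at each stage and let Lemma \ref{lem3.10} supply the triple $W_{k+1}\subset V_{k+1}\subset U_{k+1}$; since these sets are produced adaptively for the \emph{current} field $v_k$, nothing prevents them from shrinking without bound, and ``compactness of the trajectory space forces termination'' does not follow automatically from that compactness (the relevant trajectory space is changing with $v_k$). The paper sidesteps this by fixing, once and for all, a finite collection of triples for the original $\hat v$ so that the $\textup{int}(W_i)$ cover $\d_1X$, and then imposing a list of open conditions (3.35) on all subsequent perturbations that keeps these sets working; the induction then runs over that fixed finite list, so termination is built in. You should fix your cover before you start perturbing.

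Finally, one technical point you left implicit: after possibly infinitely many steps with geometrically shrinking perturbations you appeal to a limit $v_\infty$, but there is no need to take a limit at all once the cover is finite---the paper's version terminates after $N$ steps and the last field $\hat w$ is the desired one.
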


\begin{proof}
First we would like to show that  the space $\mathcal V^\ddagger(X)$ of traversally generic fields is open in the space $\mathcal V^\dagger(X) \cap \mathcal V_{\mathsf{trav}}(X)$ of boundary generic traversing fields, and thus by Theorem \ref{th3.4}, in the space of all traversing fields.

Let us start with a traversally generic field $v \in \mathcal V^\ddagger(X)$. By Theorem  \ref{th3.4}, the first bullet, there exists an open neighborhood $\mathcal O^\dagger(v) \subset \mathcal V(X)$ of $v$ such that $\mathcal O^\dagger(v) \subset \mathcal V^\dagger(X)$.  
\smallskip

We pick a finite set $\{S_\a\}$ of transversal sections of the $\hat v$-flow in the vicinity of $X \subset \hat X$, each section $S_\a$ being diffeomorphic to an open $n$-disk. We denote by $T_\a$ a closed $n$-disk which is properly contained in $S_\a$. We pick  the sections $\{S_\a \supset T_\a\}_\a$ so that each $\hat v$-trajectory that intersects with $X$ hits at least one of the flow sections $T_\a$ in its interior. Let $U_\a$ be the union of $\hat v$-trajectories through $S_\a$, and let $V_\a$ be the union of $\hat v$-trajectories through $T_\a$ (so that $V_a \subset U_\a$). Thus, $\{V_\a \cap X\}_\a$ is a cover of $X$. 

We denote by $p_\a(\hat v): U_\a \to S_\a$ the $\hat v$-directed projection, defined by  the formula $x \to \hat\g_x \cap S_\a$, where $\hat \g_x$ is the $\hat v$-trajectory through $x$.

Since $\hat v$ is transversal to the closure of each $S_\a$, there is an open neighborhood $\mathcal O^\star(v) \subset \mathcal V^\dagger(X)$ of $v$ such that, for every field $v' \in \mathcal O^\star(v)$,  each $\hat v'$-trajectory hits every section  $S_\a$ transversally or misses it.  Moreover, by the $C^\infty$-continuous dependence of ODE's solutions on the initial values and on the non-vanishing vector field, we can assume that each  $\hat v'$-trajectory through $T_\a$ is contained in the set $U_\a$, and each $\hat v'$-trajectory through the hypersurface $\delta V_\a := \d_1X \cap V_\a$ hits $S_\a$ transversally at a singleton. 

We form $\mathcal O^{\dagger\star}(v) := \mathcal O^\star(v) \cap \mathcal O^\dagger(v)$, an open neighborhood of $v$ in $\mathcal V^\dagger(X)$.

Consider the $\hat v$-directed maps $\{p_\a(\hat v): \delta V_\a \to S_\a\}_\a$. Since $v \in \mathcal V^\ddagger(X)$, each map $p_\a(\hat v)$ is traversally generic in the sense of Definition \ref{def3.6}  (where $M = \delta V_\a$ and $N = S_\a$). Examining Definition \ref{def3.6} and  Definition \ref{def3.2}, we see that the converse is true as well: if all $\{p_\a(\hat v): \delta V_\a \to S_\a\}_a$ are traversally generic maps, then $v$ is a traversally generic field.

By Theorem  \ref{th3.2}, there is an open neighborhood $\mathcal U_\a$ of the map $p_\a(\hat v)$ in $C^\infty(\delta V_\a, S_\a)$ such that each map $\Phi \in \mathcal U_\a$ is traversally generic. 

Consider the map 
$$\Xi: \mathcal O^{\dagger\star}(v) \to \prod_\a C^\infty(\delta V_\a, S_\a)$$
that takes each field $v' \in \mathcal O^{\dagger\star}(v)$ to the collection of maps $\{p_\a(\hat v')\}_\a$, defined by the $\hat v'$-flow. By the continuity of $\Xi$, the set $\mathcal O^\ddagger(v) := \Xi^{-1}(\prod_\a \mathcal U_\a)$ is open in $\mathcal O^{\dagger\star}(v)$ and thus in $\mathcal V^\dagger(X)$. 

Note that $\{\textup{Int}(\delta V_\a)\}_\a$ form an open cover of $\d_1X$, so that each $\hat v'$-trajectory through $X$ hits one set $\textup{Int}(\delta V_\a)$ at least. Since the property of a vector field $v'$ being traversally generic can be faithfully expressed in ``semi-local" terms of the vicinities of its $\hat v'$-trajectories,  we conclude that any $v' \in  \mathcal O^\ddagger(v)$ is traversally generic, in other words, that $\mathcal O^\ddagger(v)$, open in $\mathcal V^\dagger(X)$, is also open in $\mathcal V^\ddagger(X)$.
\smallskip

Now we would like to show that  $\mathcal V^\ddagger(X)$ is dense in the space $\mathcal V^\dagger(X) \cap \mathcal V_{\mathsf{trav}}(X)$. So we start with a boundary  generic and traversing field $v$. By Lemma 4.1 \cite{K1}, for such a field, there exists a smooth function $f: X \to \R$ so that $df(v) > 0$ in $X$. Let us denote by $\mathcal C_f$ the open neighborhood of $v$ in $\mathcal V^\dagger(X) \cap \mathcal V_{\mathsf{trav}}(X)$ defined by the inequality $\{df(v') > 0|\; v' \in \mathcal V^\dagger(X)\}$.
\bigskip

Each $\hat v$-trajectory $\g$ has a nested triple $W \subset V\subset U$ of $\hat v$-adjusted neighborhoods in $\hat X$ with the properties described in the key Lemma \ref{lem3.10}. Since $X$ is compact, we can choose a finite collection $\{W_i \subset V_i \subset U_i\}_i$ of such triples so that $\{\textup{int}(W_i)\}_i$ form a finite cover of $\d_1X$. Let us order the triples. 

We denote by $S_i$ a transversal section of the $\hat v$-flow in $U_i$. Let   by $T_i := S_i \cap V_i$, and $Q_i := S_i \cap W_i$. 

Recall some old notations: for a given $X$-traversing field $\hat w$ in $\hat X$ and a subset $A \subset \hat X$, we denote by $\hat X(\hat w, A)$ the union of $\hat w$-trajectories that pass through $A$. Let  $X(w, A) := \hat X(\hat w, A) \cap X$.

As we perturb the given boundary generic and traversing field $\hat v$, we will insist on all the perturbations $\hat v'$ being so small that:
\begin{enumerate}
\item $df(v') > 0$ (that is, $v' \in \mathcal C_f$),
\item $v' \in \mathcal V^\dagger(X)$,
\item $\hat v'$ is transversal to all the sections $S_i$,
\item the $\hat v'$-adjusted sets $\{\hat X(v', Q_i)\}_i$ cover $X$,
\item $\hat X(v', Q_i) \cap \d_1X \subset \hat X(v, T_i) \cup \d_1X$ for all $i$,
\end{enumerate}
\begin{eqnarray}\label{eq3.35}
\end{eqnarray}

We denote by $\hat{\mathcal U}^\bullet$ the set of such fields $\hat v'$ on $\hat X$. It depends on the choice of sections $\{S_i \supset T_i \supset Q_i\}_i$ and, via these sections, on the original field $\hat v$. Evidently, $\hat v \in \hat{\mathcal U}^\bullet$. In fact, $\mathcal U^\bullet$, formed by the restrictions to $X$ of the fields from $\hat{\mathcal U}^\bullet$, is an open set in the space $\mathcal V^\dagger(X) \cap \mathcal V_{\mathsf{trav}}(X)$. Indeed,  the openness of sets of fields satisfying  $(1)$ and $(3)$ is obvious, satisfying  $(2)$ follows from Theorem \ref{th3.4}, and $(4)$ and $(5)$ follows from the smooth dependence  of solutions of ODE's on initial data and on non-vanishing vector fields (on the ``coefficients").
\smallskip
 
Let us pick an arbitrary open neighborhood $\mathcal W_{\hat v} \subset \mathcal V(\hat X)$ of $\hat v$.  Put  $$\mathcal W^\bullet_{\hat v} := \mathcal W_{\hat v} \cap \hat{\mathcal U}^\bullet\; \text{and} \; \mathcal W^\bullet_{v} := \mathcal W_{\hat v} \cap \mathcal U^\bullet.$$ We intend to find a field $\hat v' \in  \mathcal W^\bullet_{\hat v}$ that is traversally generic with respect to $\d_1X$. This will prove that traversally generic fields form a dense set in $\mathcal V^\dagger(X) \cap \mathcal V_{\mathsf{trav}}(X)$.

Let us order the $\hat v$-sections  by forming a  finite list: $(S_1, S_2, \dots , S_N)$. By an inductive argument in the number $i$ of sections from the list,  we will systematically ``enlarge"  a set of trajectories which are traversally generic with respect to a growing portion of $\d_1X$.  

Here is how the induction step $i-1 \Rightarrow i$ works. Assume that we have managed to find a field $\hat v' \in  \mathcal W^\bullet_{\hat v}$ such that it is traversally generic, when restricted to the closed $\hat v'$-adjusted set $$F_{i -1}(\hat v'):= \hat X(\hat v', \coprod_{1 \leq k < i} Q_k).$$ By key Lemma \ref{lem3.10}, there is a $\hat X(\hat v', S_i)$-supported perturbation $\hat v'' \in \mathcal W^\bullet_{\hat v}$ of $\hat v'$ such that: $(1)$ $\hat v''$ is traversally generic in $\hat X(\hat v'', Q_i)$ and $(2)$ $\hat v'' = \hat v'$, when restricted to $[\hat X \setminus \hat X(\hat v',  S_i)] \cup F_{i -1}(\hat v')$. Such a field $\hat v''$ is traversally generic in $F_i(\hat v'')$.

Since the property $(4)$ from (\ref{eq3.35}) is enforced through the induction arguments, eventually (for $i = N$), we will construct a field $\hat w \in \mathcal W^\bullet_{\hat v}$ which is traversally generic everywhere in $X$. 

Note that the case $i =1$, the base of induction, is exactly the claim of  Lemma  \ref{lem3.10}.
\bigskip

 It remains to prove the last bullet of the theorem. For any field $v \in \mathcal V^\ddagger(X)$, we have shown (see (\ref{eq3.4})) that $m'(\g) \leq n$ for all trajectories $\g$. 

There are only two points of odd multiplicity in the set  $\g \cap \d_1X$---the two ends of $\g$.  Thus $m(\g)$ can be written in the form $$(2t_0 + 1) + \sum_{i=1}^q 2s_i + (2t_1 + 1).$$  As a result,  $$m'(\g) = 2t_0 + \sum_{i=1}^q (2s_i - 1)+ 2t_1 \leq n.$$ The latter inequality implies  that $q \leq n$. Therefore, $$m(\g) = m'(\g) + q + 2  \leq n+ q + 2 \leq 2n + 2$$, twice the dimension of $X$. 
\end{proof}

\begin{corollary}\label{cor3.3} For a given smooth nonsingular  function $f: X \to \R$, the traversally generic $f$-gradient-like fields form an open and dense set in the space of all $f$-gradient-like fields. 
\end{corollary}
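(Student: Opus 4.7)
The plan is to deduce Corollary \ref{cor3.3} directly from Theorem \ref{th3.5}, noting that the density argument in the proof of Theorem \ref{th3.5} was already designed to respect an $f$-gradient constraint.

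Let $\mathcal V_f(X) := \{v \in \mathcal V(X) \mid df(v) > 0 \text{ on } X\}$ denote the space of $f$-gradient-like fields. Since $df(v) > 0$ forces $v$ to be nonvanishing, the inclusion $\mathcal V_f(X) \subset \mathcal V_{\mathsf{trav}}(X)$ holds (each $v$-trajectory is a level-monotone curve, hence homeomorphic to an interval or singleton). Moreover, $\mathcal V_f(X)$ is an open subspace of $\mathcal V(X)$ because the pointwise condition $df(v) > 0$ is preserved under sufficiently small $C^\infty$-perturbations on the compact manifold $X$.

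For \textbf{openness}: by Theorem \ref{th3.5}, $\mathcal V^\ddagger(X)$ is open in $\mathcal V_{\mathsf{trav}}(X)$, hence $\mathcal V^\ddagger(X) \cap \mathcal V_f(X)$ is open in $\mathcal V_f(X)$ with its subspace topology.

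For \textbf{density}: fix $v \in \mathcal V_f(X)$ together with an arbitrarily small $C^\infty$-neighborhood $\mathcal W \subset \mathcal V_f(X)$ of $v$. First I would apply Theorem \ref{th3.4}, which yields a $C^\infty$-small perturbation of $v$ lying in $\mathcal V^\dagger(X)$; since $df(v) > 0$ is an open condition, this perturbation can be taken inside $\mathcal W$. Thus one may assume from the outset that $v \in \mathcal V^\dagger(X) \cap \mathcal V_f(X)$. Now I would invoke the density half of the proof of Theorem \ref{th3.5} verbatim: that proof begins by picking a smooth $f$ with $df(v) > 0$ and defines $\mathcal C_f \subset \mathcal V^\dagger(X) \cap \mathcal V_{\mathsf{trav}}(X)$ as the open set of boundary generic fields $v'$ satisfying $df(v') > 0$; the perturbations constructed via the inductive application of Lemma \ref{lem3.10} all satisfy condition (1) of (\ref{eq3.35}), namely $df(v') > 0$. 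Consequently, the traversally generic field $\hat w$ produced at the end of the induction lies in $\mathcal C_f$, so its restriction to $X$ belongs to $\mathcal V^\ddagger(X) \cap \mathcal V_f(X) \cap \mathcal W$.

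There is no real obstacle here; the only point to verify is that Lemma \ref{lem3.10} genuinely allows the perturbation $\hat v'$ of $\hat v$ to be taken $C^\infty$-small enough to maintain $df(\hat v'|_X) > 0$. This is automatic: the lemma produces \emph{arbitrary} $C^\infty$-small $U$-supported perturbations, and the inequality $df(\hat v'|_X) > 0$ is an open condition on the compact set $X$, so it persists throughout the finite inductive chain of perturbations built over the cover $\{W_i \subset V_i \subset U_i\}_i$.
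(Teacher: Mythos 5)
Your proof is correct and in essence the same as the paper's, which simply observes that the $f$-gradient-like fields form an open subset of $\mathcal V(X)$ (hence of $\mathcal V_{\mathsf{trav}}(X)$) and then invokes Theorem \ref{th3.5}. The only difference is that you re-trace the density half of the proof of Theorem \ref{th3.5} to check that the inductive perturbations via Lemma \ref{lem3.10} preserve $df(v') > 0$, whereas this is unnecessary: if $E$ is open and dense in $B$ and $A \subset B$ is open, then $E \cap A$ is automatically open and dense in $A$ (any nonempty open $U \subset A$ is open in $B$, so $E \cap U \neq \emptyset$). Applying this purely topological fact with $B = \mathcal V_{\mathsf{trav}}(X)$, $E = \mathcal V^\ddagger(X)$, and $A = \mathcal V_f(X)$ gives both openness and density in one stroke, without reopening the argument of Theorem \ref{th3.5}.
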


\begin{proof} For a fixed nonsingular $f: X \to \R$, the set of $f$-gradient-like fields is defined by the inequality $df(v) > 0$, and therefore is open in the space of all fields $\mathcal V(X)$. With this remark in mind, the corollary follows from Theorem \ref{th3.5}.
\end{proof}
\bigskip

The  semi-local models of traversally generic flows that we have developed in this paper will form a foundation of our future investigations of the rich and universal combinatorics that governs  such flows. These models will also enable us to study the topology of the trajectory spaces, generated by the traversally generic flows, an interesting class of $CW$-complexes that behave as surrogate manifolds. Finally, Theorems \ref{th3.4} and \ref{th3.5} insure that the traversally generic flows are typical among the traversing flows, thus justifying these future endeavors.

\end{document}